\begin{document}
\newenvironment {proof}{{\noindent\bf Proof.}}{\hfill $\Box$ \medskip}

\newtheorem{theorem}{Theorem}[section]
\newtheorem{lemma}[theorem]{Lemma}
\newtheorem{condition}[theorem]{Condition}
\newtheorem{proposition}[theorem]{Proposition}
\newtheorem{remark}[theorem]{Remark}
\newtheorem{definition}[theorem]{Definition}
\newtheorem{hypothesis}[theorem]{Hypothesis}
\newtheorem{corollary}[theorem]{Corollary}
\newtheorem{example}[theorem]{Example}
\newtheorem{descript}[theorem]{Description}
\newtheorem{assumption}[theorem]{Assumption}

\newcommand{\ba}{\begin{align}}
\newcommand{\ea}{\end{align}}

\def\P{\mathbb{P}}
\def\R{\mathbb{R}}
\def\E{\mathbb{E}}
\def\N{\mathbb{N}}
\def\Z{\mathbb{Z}}

\renewcommand {\theequation}{\arabic{section}.\arabic{equation}}
\def \non{{\nonumber}}
\def \hat{\widehat}
\def \tilde{\widetilde}
\def \bar{\overline}

\def\ind{{\mathchoice {\rm 1\mskip-4mu l} {\rm 1\mskip-4mu l}
{\rm 1\mskip-4.5mu l} {\rm 1\mskip-5mu l}}}

\title{\Large\ {\bf Sensitivity analysis for stochastic chemical reaction networks with multiple time-scales}}

\author{Ankit Gupta and Mustafa Khammash \\
Department of Biosystems Science and Engineering \\ ETH Zurich \\  Mattenstrasse 26 \\ 4058 Basel, Switzerland. 
}
\date{\today}
\maketitle

\begin{abstract}
Stochastic models for chemical reaction networks have become very popular in recent years. For such models, the estimation of parameter sensitivities is an important and challenging problem. Sensitivity values help in analyzing the network, understanding its robustness properties and also in identifying the key reactions for a given outcome. Most of the methods that exist in the literature for the estimation of parameter sensitivities, rely on Monte Carlo simulations using Gillespie's stochastic simulation algorithm or its variants. It is well-known that such simulation methods can be prohibitively expensive when the network contains reactions firing at different time-scales, which is a feature of many important biochemical networks. For such networks, it is often possible to exploit the time-scale separation and approximately capture the original dynamics by simulating a ``reduced" model, which is obtained by eliminating the fast reactions in a certain way. The aim of this paper is to tie these model reduction techniques with sensitivity analysis. We prove that under some conditions, the sensitivity values of the reduced model can be used to approximately recover the sensitivity values for the original model. Through an example we illustrate how our result can help in sharply reducing the computational costs for the estimation of parameter sensitivities for reaction networks with multiple time-scales. To prove our result, we use coupling arguments based on the random time change representation of Kurtz. We also exploit certain connections between the distributions of the occupation times of Markov chains and multi-dimensional wave equations. 
\end{abstract}

\noindent Keywords: parameter sensitivity; chemical reaction network; time-scale separation; multiscale network; reduced models; random time change; coupling. \\ 

\noindent Mathematical Subject Classification (2010): 60J10; 60J22; 60J27; 60H35; 65C05

\medskip
\setcounter{equation}{0}

\section{Introduction}
\label{sec:intro}
Chemical reaction networks have traditionally been studied using deterministic models that express the dynamics as a set of ordinary differential equations. Such models ignore the randomness 
in the dynamics which is caused by the discrete nature of molecular interactions. It is now widely accepted that this randomness can have a significant impact on the macroscopic properties of the 
system \cite{Goutsias,McAdams,Levin}, when the molecules are present in low copy numbers. To account for this randomness and study its effects, a stochastic formulation of the dynamics is necessary, and the most common choice is to model the dynamics as a continuous time Markov process. Such stochastic models have been extensively used in many recent articles \cite{Elowitz,Rao,Kierzek,Arkin,Noise,Haseltine} to understand the biological implications of random dynamics. For a detailed survey of Markov models for chemical reaction networks we refer the readers to \cite{DASurvey}.

Typically, a chemical reaction network depends on various kinetic parameters whose values are uncertain or suffer from measurement error. To determine the effects of inaccuracies in the parameter values, one needs to estimate the sensitivities of a given output with respect to the parameter values. If an output is highly sensitive to a specific parameter value, then greater time and effort may be invested in determining that parameter precisely. Such sensitivity values can also be useful in fine-tuning a certain output (see \cite{Feng}) or understanding the robustness properties of a system (see \cite{Stelling}). 

Estimation of parameter sensitivities is fairly straightforward for deterministic models, but it poses a major challenge for stochastic models. Many methods have been proposed in the literature for tackling this problem \cite{IRN,KSR1,KSR2,DA,Gupta}. However all these methods reply on extensive simulations of the stochastic model, which is usually carried out using Gillespie's Stochastic Simulation Algorithm \cite{GP} or its variants \cite{NR,GPSurvey}. These simulation methods account for each and every reaction event, which makes them prohibitively expensive, when the network consists of reactions firing at different time-scales. In such a scenario, the ``fast" reactions take up most of the computational time causing the simulation method to become very inefficient. Since time-scale separation is a feature of many important biochemical networks \cite{PWOLDE}, a new class of methods have been designed to exploit this feature and efficiently simulate the stochastic model \cite{ssSSA,weinan1,weinan2}. These methods simulate a ``reduced" model which is obtained by eliminating the fast components of the dynamics through a quasi-steady state approximation \cite{HR2,RaoArkin2}. Such reduced models capture the original dynamics in an approximate sense and the error in approximation disappears as the time-scale separation gets larger and larger. In \cite{HWKang}, Kang and Kurtz develop a systematic theoretical framework for constructing these reduced models. As discussed in \cite{ssSSA} and elsewhere, simulations of reduced models are generally much faster than the original model. Since most sensitivity estimation algorithms are simulation-based, it is of interest to determine if the parameter sensitivities for the original model can be approximated by the parameter sensitivities for the reduced model. Our aim in this paper is to present a theoretical result which shows that can indeed be done under certain conditions. Therefore one can obtain enormous savings in the computational costs required for the estimation of parameter sensitivities for stochastic models of multiscale reaction networks. From now on, the term ``multiscale network" refers to a chemical reaction network which consist of reactions firing at different time-scales.

It is observed in \cite{HWKang} that variations in the reaction time-scales could be both due to variation in species numbers and due to variation in rate constants. However in this paper we will only consider the latter source of variation. We now describe our stochastic model of a multiscale chemical reaction network. Suppose we have a well-stirred system consisting of $d$ chemical species. Its state at any time can be described by a vector in $\N^{d}_0$ whose $i$-th component is the non-negative integer corresponding to the number of molecules of the $i$-th species. These chemical species interact through $K$ predefined reaction channels and every time the $k$-th reaction fires, the state of the system is displaced by the $d$-dimensional stoichiometric vector $\zeta_k  \in  \Z^d$. If the state of the system is $x$, the rate at which the $k$-th reaction fires is given by $N_0^{\beta_k} \lambda_k(x)$, where $N_0$ is assumed to be a ``large" normalization parameter and $\lambda_k : \N^d_0 \to [0,\infty)$ is the \emph{propensity function} for the $k$-th reaction. The powers of $N_0$ in front of the \emph{propensity functions}, determine the various time-scales at which different reactions act.
In a stochastic setting, such a chemical reaction network can be modeled as a continuous time Markov process $\{X^{N_0}(t) : t \geq 0\}$ over $\mathbb{N}^d_0$. Given such a reaction network we have the flexibility of selecting our reference time-scale as $\gamma$. This means that we observe the reaction dynamics at times that are scaled by the factor $N_0^\gamma$. 
In other words, we observe the process $\{X^{N_0}_{\gamma}(t) : t \geq0\}$ defined by
\begin{align*}
X^{N_0}_{\gamma}(t)  = X^{N_0}(t N^{\gamma}_0) \quad \textnormal{ for } t \geq 0.
\end{align*}
Note that in the process $X^{N_0}_{\gamma}$, each reaction $k$ fires at a rate of order $N_0^{\beta_k+\gamma}$.
Hence reactions can be termed as ``fast", ``slow" or ``natural" according to whether $\beta_k + \gamma  > 0$, $\beta_k + \gamma  < 0$ or $\beta_k + \gamma = 0$ respectively. Note that as the value of $N_0$ increases, the slow reactions get slower and the fast reactions get faster. 
On the other hand, the natural reactions remain unaffected by the increase in $N_0$.
If we simulate the process $X^{N_0}_{\gamma}$ using Gillespie's Stochastic Simulation Algorithm, then the fast reactions take up most of the computational time, making the simulation procedure extremely cumbersome.   

Fortunately in certain situations, we can obtain a fairly good approximation of the dynamics by simulating a reduced model which does not contain any fast reactions. The state variables in this reduced model correspond to \emph{linear combinations} of species numbers that are unaffected by the fast reactions (see \cite{ssSSA,weinan1}).  As described in \cite{HWKang}, such model reductions can be derived by replacing $N_0$ by $N$ and showing that for a certain projection map $\Pi$ on $\R^d$, the sequence of processes $\{ \Pi X^{N}_{\gamma} : N \in \N \}$ has a well-defined limit as $N \to \infty$. The limiting process $\hat{X}$ corresponds to the stochastic model of a reduced reaction network made up of only those reactions that are ``natural" for the reference time-scale $\gamma$, making its simulation far less computationally demanding than the original model.
 In Section \ref{sec:app_results} we present these model reduction results in greater detail. Now suppose that the output of interest is given by a real-valued function $f$ and we would like to estimate the expectation $ \E\left( f( X^{N_0}_{\gamma}(t)  ) \right)$ for some observation time $t \geq 0$. If $f$ is invariant under the projection $\Pi$ (that is, $f(x) = f( \Pi x)$ for all $x \in \N^d_0$) then we would expect that  
\begin{align}
\label{introd:approx}
\lim_{N \to \infty} \E\left( f( X^{N}_{\gamma}(t)  ) \right) =\lim_{N \to \infty} \E\left( f( \Pi X^{N}_{\gamma}(t)  ) \right)  = \E\left(  f( \hat{X}(t) ) \right).
\end{align}
This limit implies that for large values of $N_0$, the quantity $\E( f( X^{N_0}_\gamma(t)  ) )$ is ``close" to $ \E(  f( \hat{X}(t) )$. Hence instead of estimating the former quantity directly we can estimate the latter quantity through simulations of the reduced model, and save a significant amount of computational effort.

As stated before, our aim in this paper is to tie these model reduction results with sensitivity analysis. Suppose that the propensity functions $\lambda_1,\dots,\lambda_K$ depend on a scalar parameter $\theta$. Now when the state is $x$, the $k$-th reaction fires at rate $ N_0^{\beta_k} \lambda_k(x,\theta)$. With these propensity functions, we can define the processes $X^{N_0}_{\gamma,\theta}$ and $X^{N}_{\gamma,\theta}$ as before, where the subscript $\theta$ is introduced to make the parameter dependence explicit. For an output function $f$ chosen as above, we would like to estimate the sensitivity of the expectation $\E( f( X^{N_0}_{\gamma}(t)  ) )$ with respect to $\theta$. In other words, we are interested in estimating
\begin{align}
\label{paramsensn0}
S^{N_0}_{\gamma ,\theta }(f,t) = \frac{\partial }{\partial \theta} \E\left( f(   X^{N_0}_{\gamma ,\theta } (t)  ) \right).
\end{align}
We remarked before that most direct methods to estimate this quantity are simulation-based. Since simulations of the process $X^{N_0}_{\gamma ,\theta } $ are very expensive, it is worthwhile to explore the possibility of using reduced models to obtain a close approximation for $S^{N_0}_{\gamma ,\theta }(f,t) $. Suppose that for each $\theta$ we have a process $\hat{X}_\theta$ which corresponds to the reduced model. Moreover there exists a projection $\Pi$ (independent of $\theta$) such that $\Pi X^{N}_{\gamma,\theta}$ converges in distribution to $\hat{X}_\theta$ as $N \to \infty$. Then similar to \eqref{introd:approx} we would get
\begin{align*}
\lim_{N \to \infty} \E\left( f( X^{N}_{\gamma,\theta}(t)  ) \right)  = \E\left(  f( \hat{X}_\theta(t) ) \right).
\end{align*}
However this relation does not ensure that
\begin{align}
\label{senisitivitylimitscommute}
\lim_{N \to \infty} \frac{\partial }{\partial \theta} \E \left(  f( X^{N}_{\gamma, \theta} (t) ) \right) = \frac{\partial }{\partial \theta} \left( \lim_{N \to \infty} \E \left(  f( X^{N}_{\gamma,\theta} (t) ) \right) \right) = \frac{\partial }{\partial \theta} \E \left(  f( \hat{X}_\theta (t) ) \right) ,
\end{align}
because in general, limits and derivatives do not commute. Note that if \eqref{senisitivitylimitscommute} holds then for large values of $N_0$, the quantity $S^{N_0}_{\gamma ,\theta }(f,t) $ is close to the value
\begin{align*}
\hat{S}_{\theta }(f,t) = \frac{\partial }{\partial \theta} \E \left(  f( \hat{X}_\theta (t) ) \right),
\end{align*}
which can be easily estimated using any of the sensitivity estimation methods \cite{IRN,KSR1,KSR2,DA,Gupta}, since simulations of the reduced model is computationally much easier than the original model.
This motivates the main result of the paper which is essentially to show that \eqref{senisitivitylimitscommute} holds under certain conditions. In the above discussion we had assumed that the output function $f$ is invariant under the projection $\Pi$, which is a highly restrictive assumption. Therefore we will prove a relation analogous to \eqref{senisitivitylimitscommute} for a general function $f$.

Even though our result is easy to state, its proof is quite technical. The main complication comes from the fact that the dynamics at different time-scales, may interact with each other in non-linear ways. Due to this problem, the proof of our main result involves several steps which are loosely described below. We mentioned above that for a certain projection $\Pi$, the process $\Pi X^{N}_{\gamma,\theta}$ may have a well-defined limit as $N \to \infty$. In such a situation, the \emph{left-over} part of the process, $(I -\Pi) X^{N}_{\gamma,\theta}$\footnote{Here $I$ is the identity projection}, does not converge in the \emph{functional} sense but it converges in the sense of occupation measures (see \cite{HWKang} or Section \ref{sec:app_results}). As reported in \cite{Bruno}, the distribution of occupation measures of Markov processes is related to the evolution of a system of multi-dimensional wave equations. Using this relation we construct another process $W^N_\theta$ whose distribution has some regularity properties with respect to $\theta$. The process $W^N_\theta$ captures the one-dimensional distribution of the process $X^{N}_{\gamma,\theta}$, which means that for any function $f$ and time $t$, we can find a function $g$ such that
$$  \E \left(  f( X^{N}_{\gamma, \theta} (t) ) \right) =  \E \left(  g( W^{N}_{\theta} (t) ) \right).  $$
Furthermore, the fast components of the dynamics are \emph{averaged} out in the process $W^N_\theta$, making it simpler to analyze than the original process $X^{N}_{\gamma, \theta} $. 
Next we couple the processes $W^N_\theta$ and $W^N_{\theta+h}$ (for a small $h$) in such a way, that it allows us to take the limits $h \to 0$ and $N \to \infty$ (in this order) of an appropriate quantity and prove our main result. This coupling is constructed using the random time change representation of Kurtz (see Chapter 7 in \cite{EK}). 

As a corollary of our main result we obtain an important relationship which can be useful in estimating steady-state parameter sensitivities. Let $X_\theta$ be a stochastic process which models the dynamics of the reaction network described above, with $\beta_k = 0$ for each $k$ and $\gamma=0$. Assume that this process is ergodic with stationary distribution $\pi_\theta$ and this distribution is difficult to compute analytically. Ergodicity implies that for any \emph{output} function $f$ we have
\begin{align*}
\lim_{t \to \infty} \E\left( f(X_\theta(t))  \right) =  \left( \int f(y) \pi_\theta(dy) \right),
\end{align*}
where the integral is taken over the state space of $X_\theta$. Suppose we are interested in computing the steady-state parameter sensitivity given by
\begin{align*}
\frac{d}{d \theta } \left( \int f(y) \pi_\theta(dy) \right).
\end{align*}
Since $\pi_\theta$ is unknown, this quantity cannot be computed directly and one has to estimate it using simulations. This can be problematic because simulations can only be performed until a finite time, and in general one is not sure if the sensitivity value estimated at a finite (but large $t$) is close to the steady-state value. However using our main result, we can conclude that under certain conditions we have
\begin{align}
\label{intro:sssenss}
\lim_{t \to \infty} \frac{\partial }{\partial \theta} \E\left( f(X_\theta(t))  \right) =   \frac{d }{d\theta} \left( \int f(y) \pi_\theta(dy) \right).
\end{align}
The details are given in Section \ref{subsec:ssparamsens}. Relation \ref{intro:sssenss} proves that for a large (but finite) $t$, the steady-state parameter sensitivity is well-approximated by 
$$ \frac{\partial }{\partial \theta}\E\left( f(X_\theta(t))  \right)  $$
which can be estimated using known simulation-based methods \cite{IRN,KSR1,KSR2,DA,Gupta}. Note that \eqref{intro:sssenss} is sometimes implicitly assumed (see \cite{PW} for example) without proof.

All the results in the paper are stated for a scalar parameter $\theta$, but the extension of these results for vector-valued parameters is relatively straightforward.
Finally we would like to mention the even though our paper is written in the context of chemical reaction networks, our main result can be applied to any continuous time Markov process
over a discrete lattice with time-scale separation in the transition rates. Other than reaction networks, such processes arise naturally in queuing theory and population modeling.  
  
This paper is organized as follows. In Section \ref{sec:app_results} we discuss the model reduction results for multiscale networks. The results stated there are simple adaptations of the results in \cite{HWKang}. Our main result is presented in Section \ref{sec:mainresult} and its proof is given in Section \ref{sec:proof}. In Section \ref{sec:example} we provide an illustrative example to show how our result can be useful.

\section*{Notation}
We now introduce some notation that we will use throughout this paper. Let $\R$, $\R_+$, $\Z$, $\N$ and $\N_{0}$ denote the sets of all reals, nonnegative reals, integers, positive integers and nonnegative integers respectively. For any $a,b \in \R$, their minimum is given by $a \wedge b$. The positive and negative parts of $a$ are indicated by $a^{+}$ and $a^{-}$ respectively. The number of elements in any finite set $E$ is denoted by $|E|$. By $\textnormal{Unif}(0,1)$  we refer to the uniform distribution on $(0,1)$.
If $\Pi$ is a \emph{projection map} on $\R^n$ then we write $\Pi x$ instead of $\Pi(x)$ for any $x \in \R^n$ and for any $S \subset \R^n$, the set $\Pi S$ is given by
\begin{align*}
\Pi S = \{ \Pi x : x \in S \}.
\end{align*}

For any $n \in \N$, $\langle \cdot , \cdot\rangle$ is the standard inner product in $\R^n$. Moreover for any $v = (v_1,\dots,v_n)\in \R^n$, $\|v\|$ is the $1$-norm defined by
$\left\| v\right\|= \sum_{i=1}^n |v_i|$. The vectors of all zeros and all ones in $\R^n$ are denoted by $\bar{0}_n$ and $\bar{1}_n$ respectively.
Let $\mathbb{M}(n,n)$ be the space of all $n \times n$ matrices with real entries. For any 
$M \in \mathbb{M}(n,n)$, the entry at the $i$-th row and the $j$-th column is indicated by $M_{ij}$. The transpose and inverse of $M$ are indicated by $M^{T}$ and $M^{-1}$ respectively. 
The symbol $I_n$ refers to the identity matrix in $\mathbb{M}(n,n)$. For any $v = (v_1,\dots,v_n) \in \R^n$, $\textrm{Diag}(v)$ refers to the matrix in $\mathbb{M}(n,n)$ whose non-diagonal entries are all $0$ and 
whose diagonal entries are $v_1,\dots,v_n$. A matrix in $\mathbb{M}(n,n)$ is called \emph{stable} if all its eigenvalues have strictly negative real parts. While multiplying a matrix with a vector we always regard the vector as a column vector.

Let $(S,d)$ be a metric space. Then by $\mathcal{B}(S)$ we refer to the set of all bounded real-valued Borel measurable functions on $S$. By $\mathcal{P}(S)$ we denote the space of all Borel probability measures on $S$. This space is equipped with the weak topology. The space of cadlag functions (that is, right continuous functions with left limits) from $[0,\infty)$ to $S$ is denoted by $D_{S} [0,\infty)$ and it is endowed with the Skorohod topology (for details see Chapter 3, Ethier and Kurtz \cite{EK}). For any $f \in D_{S} [0,\infty)$ and $t>0$, $f(t-)$ refers to the left-limit $\lim_{s \to t^{-}} f(s)$.

An operator $A$ on $\mathcal{B}(S)$ is a linear mapping that maps any function in its domain $\mathcal{D}(A) \subset \mathcal{B}(S)$ to a function in $\mathcal{B}(S)$.
The notion of the \emph{martingale problem} associated to an operator $A$ is introduced and developed in Chapter 4, Ethier and Kurtz \cite{EK}. 
In this paper, by a solution of the martingale problem for $A$ we mean a measurable stochastic process $X$ with paths 
in $D_{S} [0,\infty)$ such that for any $f \in \mathcal{D}(A)$,
\[f(X(t)) - \int_{0}^{t} A f(X(s))ds\]
is a martingale with respect to the filtration generated by $X$. 
For a given initial distribution $\pi \in \mathcal{P}(S)$, a solution $X$ of the martingale problem for $A$ is a solution of the martingale problem for $(A,\pi)$ if $\pi = \P X(0)^{-1}$. 
If such a solution $X$ exists uniquely for all $\pi \in \mathcal{P}(S)$, then we say that the martingale problem for $A$ is well-posed. Additionally, we say that 
$A$ is the generator of the process $X$.

Throughout the paper $\Rightarrow$ denotes convergence in distribution.

\section{Model Reduction results for multiscale networks}
\label{sec:app_results}
In this section we present the model reduction results for multiscale networks. Recall the definition of the process $X^{N}_{\gamma}$ from Section \ref{sec:intro}. We shall soon see that this process is well-defined under some assumptions on the propensity functions. Our primary goal in this section, is to find the values of the reference time-scale $\gamma$ such that the process $X^{N}_{\gamma}$ has a well-behaved limit as $N \to \infty$. This limit may not exist for the whole process but only for a suitable projection of the process. When the limit exists, the limiting process can be viewed as the stochastic model of a reduced reaction network, which only has reactions firing at a \emph{single} time-scale. The results mentioned in this section are derived from the more general results in \cite{HWKang}. Before we proceed we define a property of real-valued functions.
\begin{definition}
\label{polynomialgrowth}
Let $U$ be a subset of $\R^m$, $f$ be a real-valued function on $U$ and $\Pi$ be a projection map on $\R^m$. We say that the function $f$ is polynomially growing with respect to projection $\Pi$ if there exist constants $C,r > 0$ such that
\begin{align}
\label{polynomial_growth_pi}
|f(x)| \leq C(1 + \| \Pi x\|^r) \ \textnormal{ for all } x \in U.
\end{align}
We say that a function $f$ in linearly growing with respect to projection $\Pi$ if \eqref{polynomial_growth_pi} is satisfied for $r =1$.
A sequence of real-valued functions $\{f^N: N \in \N\}$ on $U$ is said to be polynomially (linearly) growing with respect to projection $\Pi$ if for some $C>0$ and $r >0$ ($r=1$), the relation \eqref{polynomial_growth_pi} holds for each $f^N$. A function (or a sequence of functions) is called polynomially (linearly) growing if it is polynomially (linearly) growing with respect to the identity projection $I$.
\end{definition}

Our first task is to ensure that there is a well-defined process which describes the stochastic dynamics of our multiscale reaction network. For this purpose we make certain assumptions.
\begin{assumption}
\label{assumptions1}
The propensity functions $\lambda_1,\dots, \lambda_K$ satisfy the following conditions.
 \begin{itemize}
\item[(A)] For any $k$ and $x \in \N^d_0$, if $\lambda_k(x) >0$ then $(x+\zeta_k)$ has all non-negative components.
\item[(B)] Let P be the set of those reactions which have a net positive affect on the total population, that is,
\begin{align}
\label{defn_positivereaction}
P = \{ k =1,\dots,K : \langle \bar{1}_d , \zeta_k \rangle > 0\}.
\end{align}
Then the function $\lambda_P : \N^d_0 \to \R_+$ defined by $\lambda_P(x) = \sum_{k \in P}\lambda_k(x)$ is linearly growing.
 \end{itemize}
\end{assumption}
Parts (A) of this assumption prevents the reaction dynamics from leaving the state space $\N^d_0$. The significance of part (B) will become clear in the next paragraph. Informally, part (B) says that all the reactions that add molecules into the system have orders $0$ or $1$. If there is a compact set S such that for each $k$, $\lambda_k(x) = 0$ for all $ x \notin S$, then part (B) is trivially satisfied.

Let $x_0$ be a vector in $\N^d_0$. Throughout the paper, the initial state of the reaction dynamics is fixed to be $x_0 \in \N^d_0$ and the corresponding \emph{stoichiometric compatibility class} is given by
\begin{align*}
\mathcal{S} = \left\{ x_0 + \sum_{k=1}^K \eta_k \zeta_k  \in \N^d_0 : \eta_1,\dots,\eta_K \in \N_0 \right\}.
\end{align*}
Part (A) of Assumption \ref{assumptions1} ensures that the reaction dynamics is always inside $\mathcal{S}$. From the description of the multiscale network with reference time-scale $\gamma$ (see  Section \ref{sec:intro}), it is clear that the generator of the reaction dynamics should be given by the operator $\mathbb{A}^N_\gamma$ whose domain is $\mathcal{D}( \mathbb{A}^N_\gamma ) = \mathcal{B}( \mathcal{S})$ and its action on any $ f\in \mathcal{B}( \mathcal{S})$ is given by
\begin{align}
\label{defn_gen_aNgamma}
\mathbb{A}^N_\gamma f(x) = \sum_{k=1}^K  N^{ \beta_k+ \gamma} \lambda_k(x) ( f(x+\zeta_k) -f(x) ).
\end{align}
From Lemma \ref{lemma1:app} we can argue that under Assumption \ref{assumptions1}, the martingale problem for $\mathbb{A}^N_\gamma$ is well-posed. Hence we can define $X^N_\gamma$ as the Markov process with generator $\mathbb{A}^N_\gamma$ and initial state $x_0$. The random time change representation (see Chapter 7 in \cite{EK}) of this process is given by
\begin{align}
\label{rtcrep1}
X^{N}_{\gamma}(t) = x_0 + \sum_{k=1}^K Y_k \left(  N^{\beta_k + \gamma} \int_{0}^t \lambda_k( X^{N}_{\gamma} (s) )ds \right)\zeta_k,
\end{align}
where $\{Y_k : k=1,\dots,K\}$ is a family of independent unit rate Poisson processes.

\subsection{Convergence at the first time-scale} \label{sec:firsttimescaleconv}

From \eqref{rtcrep1}, it is immediate that if the reference time-scale $\gamma$ is such that $\beta_k + \gamma \leq 0$ for each $k$, then all the reactions are either ``slow" or ``natural" at this time-scale\footnote{ The jargon of ``slow" , ``fast" and ``natural" reactions was introduced in Section \ref{sec:intro}}. Therefore we would expect the dynamics to converge as $N \to \infty$ and the limiting dynamics will only consist of the natural reactions.

To make this precise, define
\begin{align}
\label{firsttimescale}
\gamma_1 = - \max \{ \beta_k : k =1,\dots,K \} \ \textnormal{ and } \  \Gamma_1 = \{  k =1,\dots.K : \beta_k = - \gamma_1  \}.
\end{align}
Then $\gamma_1$ is the first time-scale for which the process $X^{N}_{\gamma_1}$ has a non-trivial limit as $N \to \infty$ and $\Gamma_1$ is the set of natural reactions for this time-scale.
Note that
\begin{align*}
\beta_k + \gamma_1 
\left\{ 
\begin{array}{cc}
 = 0 & \textnormal{ if } k \in \Gamma_1 \\
< 0 &  \textnormal{ if } k \notin \Gamma_1, 
\end{array}
\right.
\end{align*}
and hence using \eqref{rtcrep1} we can show that $X^{N}_{\gamma_1} \Rightarrow \hat{X}$ as $N \to \infty$, where the process $\hat{X}$ satisfies
 \begin{align}
 \label{rtcrep2}
\hat{X}(t) = x_0 + \sum_{k \in \Gamma_1} Y_k \left( \int_{0}^t \lambda_k(  \hat{X}(s) )ds \right)\zeta_k.
\end{align} 
In other words, $\hat{X}$ is the process with initial state $x_0$ and generator $\mathbb{C}_0$ given by
 \begin{align}
\label{defngenmathbbc}
\mathbb{C}_0 f(x) = \sum_{k \in \Gamma_1} \lambda_k(x) \left( f(x + \zeta_k) -f(x)  \right)  \ \textnormal{ for } f \in  \mathcal{D}(\mathbb{C}_0 ) = \mathcal{B}(\mathcal{S}).
\end{align}
The well-posedness of the martingale problem for $\mathbb{C}_0$ can be verified from Lemma \ref{lemma1:app} and therefore the process $\hat{X}$ is well-defined. 
The precise statement of this convergence result is given below.
\begin{proposition}
\label{convergenceresult1} 
Suppose that the propensity functions $\lambda_1,\dots,\lambda_K$ satisfy Assumption \ref{assumptions1}. Then we have $X^{N}_{\gamma_1} \Rightarrow \hat{X}$ as $N \to \infty$ where the limiting process $\hat{X}$ satisfies \eqref{rtcrep2}.
\end{proposition}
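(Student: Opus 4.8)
The plan is to realize $X^{N}_{\gamma_1}$ and $\hat{X}$ on a common probability space and show that they in fact \emph{coincide} with probability tending to one on every compact time interval. Using the random time change representation \eqref{rtcrep1}, I would drive $X^{N}_{\gamma_1}$ by a single family of independent unit-rate Poisson processes $\{Y_k : k=1,\dots,K\}$, and construct $\hat{X}$ from \eqref{rtcrep2} using the \emph{same} processes $\{Y_k : k \in \Gamma_1\}$ for the natural reactions. Let $\tau^N$ denote the first time at which some non-natural reaction $k \notin \Gamma_1$ fires in $X^{N}_{\gamma_1}$. For $t < \tau^N$ the second sum in \eqref{rtcrep1} (over $k \notin \Gamma_1$) vanishes, so on $[0,\tau^N)$ the path of $X^{N}_{\gamma_1}$ solves exactly the integral equation \eqref{rtcrep2}. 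By pathwise uniqueness of that equation, equivalently by well-posedness of the martingale problem for $\mathbb{C}_0$ (Lemma \ref{lemma1:app}), this forces $X^{N}_{\gamma_1}(s) = \hat{X}(s)$ for all $s < \tau^N$.

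It therefore suffices to show $\P(\tau^N \le T) \to 0$ as $N \to \infty$ for every fixed $T>0$. The counting process recording the non-natural firings has, up to $\tau^N$, the predictable intensity $\sum_{k \notin \Gamma_1} N^{\beta_k+\gamma_1}\lambda_k(X^{N}_{\gamma_1}(s))$, which equals $\sum_{k\notin\Gamma_1} N^{\beta_k+\gamma_1}\lambda_k(\hat{X}(s))$ since the two processes agree before $\tau^N$. Applying optional stopping to the associated martingale then yields
\begin{align*}
\P(\tau^N \le T) \le \sum_{k \notin \Gamma_1} N^{\beta_k + \gamma_1}\, \E \int_0^T \lambda_k(\hat{X}(s))\,ds.
\end{align*}
Because $\beta_k + \gamma_1 < 0$ for every $k \notin \Gamma_1$ by the definition \eqref{firsttimescale} of $\Gamma_1$, each prefactor $N^{\beta_k+\gamma_1}$ tends to $0$, so the right-hand side vanishes provided each expectation $\E\int_0^T \lambda_k(\hat{X}(s))\,ds$ is finite.

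Establishing this integrability is the crux, and I expect it to be the main obstacle, since the propensities are controlled only through the single linear-growth hypothesis on the population-increasing reactions rather than through any global boundedness. The idea is to use Assumption \ref{assumptions1}(B): as only reactions in $P$ increase the total population $\langle \bar{1}_d, \cdot\rangle$ and the aggregate propensity $\lambda_P$ grows at most linearly, a comparison/Gronwall estimate on $\E\langle \bar{1}_d, \hat{X}(t\wedge\sigma_M)\rangle$, with $\sigma_M$ a localizing sequence of exit times of growing balls in $\mathcal{S}$, shows that $\hat{X}$ is non-explosive and that its total population has moments bounded on $[0,T]$ uniformly in the localization. Combined with the (polynomial) growth of each $\lambda_k$ on the lattice $\mathcal{S}$, this delivers the required finiteness of $\E\int_0^T \lambda_k(\hat{X}(s))\,ds$.

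Assembling the pieces, on the event $\{\tau^N > T\}$ the coupled paths satisfy $\sup_{s\le T}\|X^{N}_{\gamma_1}(s) - \hat{X}(s)\| = 0$, and $\P(\tau^N > T) \to 1$ for every $T>0$. Hence $X^{N}_{\gamma_1} \to \hat{X}$ in probability in the Skorohod topology on $D_{\mathcal{S}}[0,\infty)$, which in particular implies the asserted convergence in distribution $X^{N}_{\gamma_1} \Rightarrow \hat{X}$.
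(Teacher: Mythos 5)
Your coupling strategy is genuinely different from the paper's proof (which simply invokes Theorem 4.1 of \cite{HWKang}) and is sound in outline, but one step fails under the stated hypotheses: the integrability $\E\int_0^T \lambda_k(\hat{X}(s))\,ds < \infty$ for $k \notin \Gamma_1$. You justify it by ``the (polynomial) growth of each $\lambda_k$'', but Assumption \ref{assumptions1} controls only the aggregate propensity $\lambda_P$ of the population-\emph{increasing} reactions; a propensity $\lambda_k$ with $k \notin P$ may grow arbitrarily fast. Concretely, take $d=1$, $x_0=0$, reaction $1$: $\emptyset \to S$ with $\lambda_1 \equiv 1$, $\beta_1 = 0$, and reaction $2$: $S \to \emptyset$ with $\lambda_2(x) = x\,e^{e^x}$, $\beta_2 = -1$. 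Then Assumption \ref{assumptions1} holds with $P=\{1\}$, $\gamma_1 = 0$, $\Gamma_1 = \{1\}$, and $\hat{X}(s) = Y_1(s)$ is a unit-rate Poisson process; yet $\E\left[\lambda_2(\hat{X}(s))\right] = \sum_{n \ge 1} n e^{e^n} e^{-s}s^n/n! = \infty$ for every $s>0$, because $e^{e^n}$ dominates $n!$. So your Markov/optional-stopping bound reads $\P(\tau^N \le T) \le N^{-1}\cdot\infty$ and is vacuous; the polynomial moment bounds of Lemma \ref{lemma1:app} cannot repair this, since they control only polynomially growing functionals of the state.

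The gap is fixable without changing your coupling, because only almost-sure finiteness is needed, not integrability. Since $\hat{X}$ is non-explosive (Lemma \ref{lemma1:app}), its path is a.s. bounded on $[0,T]$, so $B_k := \int_0^T \lambda_k(\hat{X}(s))\,ds < \infty$ a.s.; moreover $B_k$ is a functional of $\{Y_j : j \in \Gamma_1\}$ alone and is therefore independent of the clocks $\{Y_k : k \notin \Gamma_1\}$. On $[0,\tau^N)$ the coupled paths agree, so $\tau^N \le T$ forces some $Y_k$ with $k \notin \Gamma_1$ to have a jump in $[0, N^{\beta_k+\gamma_1}B_k]$, whence
\begin{align*}
\P(\tau^N > T) \;\ge\; \E\left[ \exp\left( -\sum_{k \notin \Gamma_1} N^{\beta_k+\gamma_1} B_k \right) \right] \;\longrightarrow\; 1 \qquad (N \to \infty)
\end{align*}
by dominated convergence, since each exponent tends to $0$ a.s. (recall $\beta_k + \gamma_1 < 0$ for $k \notin \Gamma_1$ by \eqref{firsttimescale}). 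With this replacement your conclusion --- the paths coincide on $[0,T]$ with probability tending to one, hence convergence in probability in the Skorohod topology, and a fortiori $X^{N}_{\gamma_1} \Rightarrow \hat{X}$ --- goes through. A minor imprecision worth fixing as well: pathwise uniqueness for \eqref{rtcrep2} is not ``equivalent'' to well-posedness of the martingale problem for $\mathbb{C}_0$ (the latter yields uniqueness in law); what you actually use is that solutions of the jump equation are determined jump by jump up to the explosion time, together with non-explosion, which is again supplied by Lemma \ref{lemma1:app}.
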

\begin{proof}
The proof follows easily from Theorem 4.1 in \cite{HWKang}.
\end{proof} 

Observe that this proposition can be viewed as a model reduction result, which says that at the time-scale $\gamma_1$, the dynamics of the original model (given by $X^{N_0}_{\gamma_1} $) is well-approximated by the dynamics of a reduced model (given by $\hat{X}$) for large values of $N_0$. This reduced model is obtained by simply dropping the ``slow" reactions from the network. Such a model reduction result is \emph{trivial} because one can easily see from the reaction time-scales that the slow reactions will not participate in the limiting dynamics. In the next section we describe a non-trivial model reduction result which is more useful from the point of view of applications.

\subsection{Convergence at the second time-scale} \label{sec:secondtimescaleconv}

As discussed in several recent papers \cite{Ball,HWKang}, there may be a second time-scale $\gamma_2$ ($ >\gamma_1$) so that a certain projection $\Pi_2$ of the process $X^N_{\gamma_2}$ has a well-behaved limit as $N \to \infty$. At this second time-scale, the network has ``fast" reactions in addition to the ``slow" and ``natural" reactions. The projection $\Pi_2$ is such, that the fast reactions do not affect the projected process $\Pi_2 X^N_{\gamma_2} $. Assuming quasi-stationarity for the fast sub-network \cite{HR2,RaoArkin2} we can have a well-defined limit $\hat{X}$ for the process $\Pi_2 X^N_{\gamma_2}$. Moreover the limiting process $\hat{X}$ corresponds to the stochastic model of a reduced reaction network which only contains those reactions that are natural for the time-scale $\gamma_2$.

We now describe this convergence result formally. Suppose that the set
\begin{align*}
\mathbb{S}_2 = \{ v \in  \R_+^d : \langle v, \zeta_k \rangle =0 \ \textnormal{ for all } \ k \in \Gamma_1 \}
\end{align*}
is non-empty. Then for any $v \in \mathbb{S}_2$, the process $\{ \langle v , X^{N}_{\gamma_2}(t) \rangle  : t \geq 0 \}$ is unaffected by the reactions in $\Gamma_1$. 
Let  $\gamma_v = - \max \{ \beta_k : k =1,\dots,K \ \textnormal{ and } \ \langle v, \zeta_k \rangle \neq 0 \}$ and define
\begin{align}
\label{secondtimescale}
\gamma_2 = \inf \{ \gamma_v : v \in \mathbb{S}_2\} \ \textnormal{ and } \  \Gamma_2 = \{  k =1,\dots.K : \beta_k = - \gamma_2  \}.
\end{align}
Then $\gamma_2 > \gamma_1$ by definition and note that the reactions in $\Gamma_1$ are fast at the time-scale $\gamma_2$. Let $\mathbb{L}_2$ be the subspace spanned by the vectors in $\mathbb{S}_2$ and let $\Pi_2$ be the projection map from $\R^d$ to $\mathbb{L}_2$. The definition of $\mathbb{L}_2$ implies that
\begin{align}
\label{fastreactionsdonotaffect}
\Pi_2 \zeta_k = \bar{0}_d \ \textnormal{ for all } k \in \Gamma_1,
\end{align}
which means that the fast reactions would leave the process $\Pi_2 X^N_{\gamma_2} $ unchanged.
Let $\mathbb{L}_1$ be the space spanned by the vectors in $(I-\Pi_2)\mathcal{S} = \{ (I -\Pi_2) x : x \in \mathcal{S}\}$, where $I$ is the identity map. For any $v \in \Pi_2\mathcal{S}$ let
\begin{align}
\label{defn_ev}
\mathbb{H}_v = \{ y \in \mathbb{L}_1 : y = (I- \Pi_2)x , \ \Pi_2 x = v \ \textnormal{ and } x \in \mathcal{S} \}
\end{align}
and define the operator $\mathbb{C}^v$ by
\begin{align}
\label{defn_cv}
\mathbb{C}^v f(z) = \sum_{k \in \Gamma_1} \lambda_k(v+z) \left( f(z + \zeta_k) -f(z)  \right)  \ \textnormal{ for } f \in  \mathcal{D}(\mathbb{C}^v ) = \mathcal{B}(\mathbb{H}_v ).
\end{align}
The operator $\mathbb{C}^v$ can be seen as the generator of a Markov process with state space $\mathbb{H}_v$.

We now define the \emph{occupation measure} of the process $ (I - \Pi_2) X^{N}_{\gamma_2}$. This is a random measure on $\mathbb{L}_1 \times [0,\infty)$ given by
\begin{align*}
V^{N}_{\gamma_2}(C \times [0,t]) = \int_{0}^t \ind_{C}\left(  (I - \Pi_2) X^{N}_{\gamma_2}(s)  \right) ds,
\end{align*}
where $C$ is any Borel measurable subset of $\mathbb{L}_1$. Note that for any $k$
\begin{align*}
\int_{0}^t \lambda_k( X^{N}_{\gamma_2}(s)   )ds  =  \int_{0}^{t} \int_{\mathbb{L}_1} \lambda_k(  \Pi_2 X^{N}_{\gamma_2}(s)  +y  ) V^{N}_{\gamma_2}(dy \times ds). 
\end{align*}
Therefore using \eqref{rtcrep1} and \eqref{fastreactionsdonotaffect}, we can write the random time change representation for the process $\Pi_2 X^{N}_{\gamma_2}$ as 
\begin{align}
\label{rtc_rep3}
\Pi_2 X^{N}_{\gamma_2}(t)  & = \Pi_2 x_0 + \sum_{k \in \Gamma_1} Y_k \left( N^{\beta_k + \gamma} \int_{0}^t \lambda_k( X^{N}_{\gamma_2}(s)   )ds  \right) \Pi_2 \zeta_k  + \sum_{k \in \Gamma_2} Y_k \left( N^{\beta_k + \gamma}  \int_{0}^t \lambda_k( X^{N}_{\gamma_2}(s)   )ds  \right) \Pi_2 \zeta_k  \notag \notag \\
&+ \sum_{k \notin  \Gamma_1 \cup \Gamma_2} Y_k \left( N^{\beta_k + \gamma}  \int_{0}^t \lambda_k( X^{N}_{\gamma_2}(s)   )ds  \right) \Pi_2 \zeta_k  \notag \\
& = \Pi_2 x_0 + \sum_{k \in \Gamma_2} Y_k \left( N^{\beta_k + \gamma}   \int_{0}^{t} \int_{\mathbb{L}_1} \lambda_k(  \Pi_2 X^{N}_{\gamma_2}(s) +y  ) V^{N}_{\gamma_2}(dy \times ds)\right) \Pi_2 \zeta_k  \\
&+ \sum_{k \notin  \Gamma_1 \cup \Gamma_2} Y_k \left( N^{\beta_k + \gamma}  \int_{0}^{t}  \int_{\mathbb{L}_1} \lambda_k(  \Pi_2 X^{N}_{\gamma_2}(s)  +y  ) V^{N}_{\gamma_2}(dy \times ds) \right) \Pi_2 \zeta_k.  \notag
\end{align}
Suppose that $V^{N}_{\gamma_2} \Rightarrow V$ as $N \to \infty$. In other words, for any $f \in \mathcal{B}( \mathcal{S})$ and $t>0$
\begin{align*}
\int_{0}^t \int_{\mathbb{L}_1}  f(x)V^{N}_{\gamma_2}(dx \times ds) \Rightarrow  \int_{0}^t \int_{\mathbb{L}_1}  f(x) V(dx \times ds) \ \textnormal{ as } \ N \to \infty.
\end{align*}
Since
\begin{align*}
\beta_k + \gamma_2 
\left\{ 
\begin{array}{cc}
 = 0 & \textnormal{ if } k \in \Gamma_2 \\
< 0 &  \textnormal{ if } k \notin \Gamma_1 \cup \Gamma_2,
\end{array}
\right.
\end{align*}
we can expect from \eqref{rtc_rep3} that $\Pi_2 X^{N}_{\gamma_2} \Rightarrow \hat{X}$ as $N \to \infty$ where the process $\hat{X}$ satisfies
\begin{align*}
\hat{X}(t) = \Pi_2x_0 + \sum_{k \in \Gamma_2} Y_k \left(  \int_{0}^t \int_{ \mathbb{L}_1 } \lambda_k( \hat{X}(s) +y ) V(dy \times ds)  \right) \Pi_2 \zeta_k.
\end{align*}
It can be seen that between consecutive jump times of the process $\Pi_2 X^{N}_{\gamma_2}$, if the state of the process $\Pi_2 X^{N}_{\gamma_2}$ is $v$, then the process $(I - \Pi_2)X^{N}_{\gamma_2}$ evolves like a Markov process with generator $\mathbb{C}^v$. If the generator $\mathbb{C}^v$ corresponds to an ergodic Markov process with the unique stationary distribution as $\pi^v \in \mathcal{P}( \mathbb{H}_v )$, then the limiting measure $V$ has the form
\begin{align}
\label{limitingoccmeasure}
V(dy \times ds) = \pi^{\hat{X}(s)}(dy)ds.
\end{align}
Therefore the random time change representation of the process $\hat{X}$ becomes
\begin{align}
\label{rtc_limit_process2}
\hat{X}(t) = \Pi_2x_0 + \sum_{k \in \Gamma_2} Y_k \left(  \int_{0}^t \hat{ \lambda }_k( \hat{X}(s) ) ds \right) \Pi_2 \zeta_k,
\end{align}
where $\hat{\lambda}_k(v) = \int_{ \mathbb{H}_v} \lambda_k( v+z ) \pi^v(dz)$. Before we state the convergence result, we need to make some assumptions.
\begin{assumption}
\label{assumptions2}
 \begin{itemize}
\item[(A)] For any $v = \Pi_2 \mathcal{S}$, the space $\mathbb{H}_v$ (given by \eqref{defn_ev}) is finite.
\item[(B)] The Markov process with generator $\mathbb{C}^v$ is ergodic and its unique stationary distribution is $\pi^v \in \mathcal{P}( \mathbb{H}_v )$.
\item[(C)] Let P be the set of reactions given by
\begin{align}
\label{defn_positivereaction2}
P = \{ k  = 1,\dots,K : \langle \bar{1}_d , \Pi_2 \zeta_k \rangle > 0\}.
\end{align}
Then the function $\lambda_P : \N^d_0 \to \R_+$ defined by $\lambda_P(x) = \sum_{k \in P}\lambda_k(x)$ is linearly growing with respect to projection $\Pi_2$ (see Definition \ref{polynomialgrowth}).
 \end{itemize}
\end{assumption}
Observe that part (C) implies that the functions $\{\hat{\lambda}_k : k \in \Gamma_2\}$ satisfy part (B) of Assumption \ref{assumptions1}. Therefore the process $\hat{X}$ satisfying \eqref{rtc_limit_process2} is well-defined due to Lemma \ref{lemma1:app}. Note that the set $\mathbb{H}_v$ can either be finite or countably infinite. 
Our main result (Theorem \ref{mainsensitivityresult}) should hold in both the cases, but to simplify the proof we assume that $\mathbb{H}_v$ is finite (part (A) of Assumption \ref{assumptions2}). We later discuss how the proof changes when this is not the case (see Remark \ref{rem:extension}). In many important  biochemical multiscale networks, the fast reactions conserve some quantity that only depends on the natural dynamics (see \cite{ssSSA,weinan1,PWOLDE}). In such a scenario, the set $\mathbb{H}_v$ will be finite. We now state the convergence result at the second time-scale.
\begin{proposition}
\label{convergenceresult2} 
Suppose that Assumption \ref{assumptions1} and \ref{assumptions2} hold.  Then $( \Pi_2 X^{N}_{\gamma_2} , V^{N}_{\gamma_2} ) \Rightarrow (\hat{X} , V )$ as $N \to \infty$, where the process $\hat{X}$ satisfies \eqref{rtc_limit_process2} and $V$ satisfies \eqref{limitingoccmeasure}.
\end{proposition}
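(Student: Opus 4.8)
The plan is to treat Proposition \ref{convergenceresult2} as an averaging result in the spirit of Kurtz and of Kang and Kurtz \cite{HWKang}, working entirely through the random time change representation \eqref{rtc_rep3} and the occupation measure $V^{N}_{\gamma_2}$. The argument proceeds in four stages: (i) prove relative compactness of the family $\{(\Pi_2 X^{N}_{\gamma_2}, V^{N}_{\gamma_2}) : N \in \N\}$; (ii) show that every subsequential limit of the occupation measures disintegrates as $V(dy \times ds) = \mu_s(dy)\,ds$; (iii) identify $\mu_s$ with the stationary distribution $\pi^{\hat{X}(s)}$ of the fast subnetwork using ergodicity; and (iv) conclude by uniqueness of the limiting martingale problem.

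First I would establish tightness. The role of Assumption \ref{assumptions2}(C) is that the only reactions producing a net positive increment of $\langle \bar{1}_d, \Pi_2\, \cdot\,\rangle$ have linearly growing aggregate propensity with respect to $\Pi_2$; this gives a uniform-in-$N$ bound on moments of $\sup_{s \le t}\|\Pi_2 X^{N}_{\gamma_2}(s)\|$ and on the expected number of jumps of the slow reactions on compact time intervals, playing exactly the role part (B) of Assumption \ref{assumptions1} plays for the first time-scale. Since the reactions in $\Gamma_2$ fire at $O(1)$ rates and those outside $\Gamma_1 \cup \Gamma_2$ at vanishing rates, these moment bounds together with the relative compactness criteria of Ethier and Kurtz \cite[Chapter~3]{EK} yield tightness of $\{\Pi_2 X^{N}_{\gamma_2}\}$ in $D_{\mathbb{L}_2}[0,\infty)$ and supply the compact containment needed for tightness of $\{V^{N}_{\gamma_2}\}$ in the space of measures appearing in the definition of $V^{N}_{\gamma_2} \Rightarrow V$ above.

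The crux, and the step I expect to be the main obstacle, is the averaging identification in stages (ii) and (iii). Along a convergent subsequence the limit $V$ inherits $V(\mathbb{L}_1 \times [0,t]) = t$, so it disintegrates as $V(dy \times ds) = \mu_s(dy)\,ds$ for a measurable family of probability measures $\mu_s$ on $\mathbb{L}_1$. To pin down $\mu_s$, fix a test function $g \in \mathcal{D}(\mathbb{C}^v) = \mathcal{B}(\mathbb{H}_v)$ and observe that between slow jumps the fast block $(I-\Pi_2)X^{N}_{\gamma_2}$ evolves as a Markov process with generator $N^{\gamma_2-\gamma_1}\mathbb{C}^{v}$, $v = \Pi_2 X^{N}_{\gamma_2}$ held fixed, since for $k \in \Gamma_1$ one has $\beta_k + \gamma_2 = \gamma_2 - \gamma_1 > 0$. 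The martingale
\[
g\!\left((I-\Pi_2)X^{N}_{\gamma_2}(t)\right) - N^{\gamma_2 - \gamma_1}\int_0^t \mathbb{C}^{\Pi_2 X^{N}_{\gamma_2}(s)} g\!\left((I-\Pi_2)X^{N}_{\gamma_2}(s)\right) ds
\]
(more precisely, its version accounting for the lower-order contributions of the slow reactions) stays bounded because $g$ is bounded, whereas dividing by $N^{\gamma_2-\gamma_1} \to \infty$ forces
\[
\int_0^t \int_{\mathbb{L}_1} \mathbb{C}^{\Pi_2 X^{N}_{\gamma_2}(s)} g(y)\, V^{N}_{\gamma_2}(dy \times ds) \to 0.
\]
Passing to the limit and using the finiteness of $\mathbb{H}_v$ from Assumption \ref{assumptions2}(A), this gives $\int_{\mathbb{H}_{\hat{X}(s)}} \mathbb{C}^{\hat{X}(s)} g \, d\mu_s = 0$ for almost every $s$ and all $g$, so $\mu_s$ is stationary for $\mathbb{C}^{\hat{X}(s)}$; the ergodicity hypothesis Assumption \ref{assumptions2}(B) then forces $\mu_s = \pi^{\hat{X}(s)}$, which is exactly \eqref{limitingoccmeasure}. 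The delicate points are justifying the interchange of limits in the presence of the nonlinear dependence of $\mathbb{C}^v$ on $v$ and ruling out escape of the fast variable to infinity — this is where finiteness of $\mathbb{H}_v$ simplifies matters, as noted in the remark preceding the statement.

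Finally I would substitute the identified limiting measure back into \eqref{rtc_rep3}. The reactions outside $\Gamma_1 \cup \Gamma_2$ contribute nothing in the limit, and the averaging
\[
\int_0^t \int_{\mathbb{L}_1} \lambda_k(\Pi_2 X^{N}_{\gamma_2}(s)+y)\, V^{N}_{\gamma_2}(dy \times ds) \to \int_0^t \hat{\lambda}_k(\hat{X}(s))\,ds, \qquad \hat{\lambda}_k(v) = \int_{\mathbb{H}_v} \lambda_k(v+z)\,\pi^v(dz),
\]
shows that any subsequential limit $\hat{X}$ satisfies the random time change equation \eqref{rtc_limit_process2}, equivalently solves the martingale problem for the averaged generator. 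Since Assumption \ref{assumptions2}(C) guarantees that the averaged propensities $\{\hat{\lambda}_k : k \in \Gamma_2\}$ satisfy part (B) of Assumption \ref{assumptions1}, Lemma \ref{lemma1:app} yields well-posedness of that martingale problem; the limit is therefore unique, and the full sequence $(\Pi_2 X^{N}_{\gamma_2}, V^{N}_{\gamma_2})$ converges to $(\hat{X}, V)$.
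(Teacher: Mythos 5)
Your proposal is correct in outline, but it differs from the paper in a specific sense: the paper does not prove this proposition directly at all --- its entire proof is the one-line citation ``The proof follows from Theorem 5.1 in \cite{HWKang}.'' What you have written is, in effect, a reconstruction of the proof of that cited theorem: your four stages (tightness from the linear-growth hypothesis, disintegration of subsequential limits of the occupation measure, identification of $\mu_s$ with $\pi^{\hat{X}(s)}$ via the rescaled martingale for the fast generator, and uniqueness through well-posedness of the averaged martingale problem) are precisely the stochastic-averaging machinery of Kurtz that Kang and Kurtz's Theorem 5.1 packages. The trade-off is clear: the paper's citation is economical and inherits the generality of the Kang--Kurtz framework (which also treats scalings in species numbers, not only rate constants), whereas your direct argument is self-contained and makes visible exactly where each hypothesis enters --- part (A) of Assumption \ref{assumptions2} to confine the fast variable, part (B) to force $\mu_s = \pi^{\hat{X}(s)}$, and part (C) both for the moment bounds and to ensure the averaged propensities $\hat{\lambda}_k$ satisfy part (B) of Assumption \ref{assumptions1}, so that Lemma \ref{lemma1:app} gives uniqueness of the limiting martingale problem. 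If you were to write your argument out in full, two points need explicit care: the test functions $g$ must be defined on all of $\mathbb{L}_1$ rather than on a single fiber $\mathbb{H}_v$, since the fast component jumps between fibers whenever a slow reaction fires ($\zeta^f_k = (I-\Pi_2)\zeta_k$ need not vanish for $k \in \Gamma_2$), and the martingale term must be shown to vanish after division by $N^{\gamma_2-\gamma_1}$ (an $L^2$ estimate on its quadratic variation); and passing to the limit in $\int_0^t \int_{\mathbb{L}_1} \lambda_k(\Pi_2 X^{N}_{\gamma_2}(s)+y)\,V^{N}_{\gamma_2}(dy \times ds)$ requires uniform integrability because the $\lambda_k$ are unbounded --- your stage-(i) moment bounds supply this, but the step should be invoked explicitly rather than folded into the convergence of occupation measures.
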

\begin{proof}
The proof follows from Theorem 5.1 in \cite{HWKang}. 
\end{proof}

\subsection{Convergence at higher time-scales} \label{sec:highertimescaleconv}

In Section \ref{sec:secondtimescaleconv} we outlined a systematic procedure to obtain a \emph{single-step} model reduction for a multiscale reaction network. The main idea was to assume ergodicity for the ``fast" sub-network and incorporate its steady-state information in the propensities of the ``natural" reactions. Moreover the ``slow" reactions can be ignored completely.
This single-step reduction process can be carried over multiple steps to construct a hierarchy of reduced models. 
This is useful because many biochemical networks have reactions spanning several time-scales (see \cite{Heat}, for example). 
Hence for a given reference time-scale, many steps of model reduction may be required to a obtain a model which is \emph{simple enough}, to be amenable for extensive simulations that are required for sensitivity estimation.

For our main result, we will assume that we are in the situation of Proposition \ref{convergenceresult2}, which describes a single-step model reduction. In Section \ref{subsec:ssparamsensmultreductions}, we shall discuss how our result can be used to estimate parameter sensitivity using reduced models that are obtained after many steps of model reduction.

\section{The Main Result} \label{sec:mainresult}
In this section we present our main result on sensitivity analysis of multiscale networks. Suppose that the propensity functions $\lambda_1,\dots,\lambda_K$ depend on a real-valued parameter $\theta$ and Assumption \ref{assumptions1} are satisfied for each value of $\theta$. If the reference time-scale is $\gamma$, then the reaction dynamics will be captured by the generator 
\begin{align}
\label{defn_gen_aNgamma_theta}
\mathbb{A}^N_{\gamma,\theta} f(x) = \sum_{k=1}^K  N^{ \beta_k+ \gamma} \lambda_k(x,\theta) ( f(x+\zeta_k) -f(x) ) \ \textnormal{ for any } f\in  \mathcal{D}( \mathbb{A}^N_{\gamma,\theta} ) = \mathcal{B}( \mathcal{S}).
\end{align} 
Using Lemma \ref{lemma1:app} we can argue that the martingale problem corresponding to $\mathbb{A}^N_{\gamma,\theta}$ is well-posed. Let $X^N_{\gamma,\theta}$ be the process with generator $\mathbb{A}^N_{\gamma,\theta}$ and initial state $x_0$. 

We use the same notation as in Section \ref{sec:secondtimescaleconv}. Note that the definitions of $\gamma_i,\Gamma_i,\mathbb{S}_i$ and $\mathbb{L}_i$, for $i=1$ and $2$, only depend on the stoichiometry of the reaction network and are hence independent of $\theta$. Similarly the projection map $\Pi_2$ and the space $\mathbb{H}_v$ (see \eqref{defn_ev}) do not depend on $\theta$. The definition of the operator $\mathbb{C}^v$ (see \eqref{defn_cv}) changes to 
\begin{align}
\label{defn_cvtheta}
\mathbb{C}^v_\theta f(z) = \sum_{k \in \Gamma_1} \lambda_k(v+z,\theta) \left( f(z + \zeta_k) -f(z)  \right)  \ \textnormal{ for } f \in  \mathcal{D}(\mathbb{C}^v_\theta ) = \mathcal{B}(\mathbb{H}_v ).
\end{align}
For our main result we require the following assumptions.
\begin{assumption}
\label{assforsensitivityresult}
\begin{itemize}
\item[(A)] Parts (A) and (C) of Assumption \ref{assumptions2} are satisfied. In addition, the mapping $v \mapsto | \mathbb{H}_v|$ is polynomially growing (see Definition \ref{polynomialgrowth}). 
\item[(B)] A Markov process with generator $\mathbb{C}^v_\theta$ is ergodic and its unique stationary distribution is $\pi^v_\theta \in \mathcal{P}( \mathbb{H}_v )$.
\item[(C)] Let $x \in \mathcal{S}$ be fixed. Then for any $k=1,\dots, K$, the function $\lambda_k(x,\cdot)$ is twice-continuously differentiable in a neighbourhood of $\theta$.
\item[(D)] For each $k \in \Gamma_2$, the functions $\lambda_k(\cdot,\theta)$ and $\partial \lambda_k (\cdot,\theta)/ \partial \theta$ are polynomially growing with respect to projection $\Pi_2$. 
Moreover there exists an $\epsilon >0$ such that the function
\begin{align*}
\sup_{ \xi \in (\theta -\epsilon , \theta + \epsilon ) } \left| \frac{ \partial^2 \lambda_k (\cdot,\xi)  }{  \partial \theta^2 } \right|
\end{align*}
is also polynomially growing with respect to projection $\Pi_2$.
\item[(E)] The functions $\{ \lambda_k(\cdot,\theta) : k \in \Gamma_2 \}$ satisfy part (B) of Assumption \ref{assumptions1}.
\end{itemize}
\end{assumption}

Note that if Assumption \ref{assforsensitivityresult} hold then Assumption \ref{assumptions2} will also hold. Hence Proposition \ref{convergenceresult2} ensures that $ \Pi_2 X^{N}_{\gamma_2,\theta} \Rightarrow \hat{X}_\theta$ as $N \to \infty$. The process $\hat{X}_\theta$ has initial state $\Pi_2 x_0$ and generator $\hat{\mathbb{A} }_\theta$ given by
\begin{align}
\label{finallimitgen}
\hat{\mathbb{A} }_\theta f(x) = \sum_{k \in \Gamma_2} \hat{\lambda}_k(x,\theta) ( f(x + \Pi_2 \zeta_k) -f(x) ) \ \textnormal{ for any } \ f \in  \mathcal{D}(\hat{\mathbb{A} }_\theta  ) =\mathcal{B}( \Pi_2 \mathcal{S}),
\end{align}
where the function $\hat{\lambda}_k(\cdot,\theta ) : \Pi_2 \mathcal{S} \to \R_+$ is defined by
\begin{align}
\label{defn_lambdahattheta}
\hat{\lambda}_k(x,\theta) = \int_{ \mathbb{H}_x }\lambda_k( x+ y,\theta) \pi^{x}_\theta(dy).
\end{align}
We now state our main result whose proof is given in Section \ref{sec:maintheoremproof}.
\begin{theorem}
\label{mainsensitivityresult}
Suppose that Assumption \ref{assforsensitivityresult} hold and the function $f : \mathcal{S} \to \R$ is polynomially growing with respect to projection $\Pi_2$.  Then for any $t > 0$ we have
\begin{align}
\label{mainresultequation}
\lim_{N \to \infty} \frac{\partial }{\partial \theta}\E \left(  f( X^{N}_{\gamma_2, \theta} (t) ) \right)= \frac{\partial }{\partial \theta}\E \left(  f_{\theta}( \hat{X}_\theta (t) ) \right) ,
\end{align}
where $f_\theta : \Pi_2 \mathcal{S} \to \R$ is given by
\begin{align}
\label{defn_fthetalimit}
f_\theta(x) = \int_{ \mathbb{H}_x }f( x+ y) \pi^{x}_\theta(dy).
\end{align}
\end{theorem}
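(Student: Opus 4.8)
The plan is to reduce the statement to an interchange of the limit $N\to\infty$ with the $\theta$-derivative, and to carry out this interchange through a finite-difference representation together with a coupling that remains stable as $N\to\infty$, rather than by naively differentiating. Write $u^N(\theta) = \E(f(X^N_{\gamma_2,\theta}(t)))$ and $\hat u(\theta) = \E(f_\theta(\hat X_\theta(t)))$, so that \eqref{mainresultequation} is exactly $\lim_{N\to\infty}\partial_\theta u^N(\theta) = \partial_\theta \hat u(\theta)$. As a first step I would establish the pointwise statement $u^N(\theta)\to\hat u(\theta)$: Proposition \ref{convergenceresult2} gives $\Pi_2 X^N_{\gamma_2,\theta}\Rightarrow\hat X_\theta$ together with convergence of the occupation measure of $(I-\Pi_2)X^N_{\gamma_2,\theta}$ to $\pi^{\hat X_\theta(s)}_\theta(dy)\,ds$, and averaging $f$ against the quasi-stationary law turns $\E(f(X^N_{\gamma_2,\theta}(t)))$ into $\E(f_\theta(\hat X_\theta(t)))$; to upgrade weak convergence to convergence of these (only polynomially bounded) expectations I would use the moment bounds furnished by Lemma \ref{lemma1:app} together with the growth hypotheses of Assumption \ref{assforsensitivityresult} to get uniform integrability. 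This pointwise convergence alone does not give the theorem, so the real work is to control the $\theta$-dependence uniformly in $N$.

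The central difficulty, and the reason a naive argument fails, is the fast sub-network. If one differentiated $u^N$ by a likelihood-ratio (Girsanov) identity, or coupled $X^N_{\gamma_2,\theta}$ and $X^N_{\gamma_2,\theta+h}$ directly through shared Poisson clocks, the reactions $k\in\Gamma_1$ fire at rate $N^{\beta_k+\gamma_2}\to\infty$, so the resulting derivative estimator, or the discrepancy between the coupled paths, would blow up with $N$ and could not be passed to the limit. To get around this I would replace $X^N_{\gamma_2,\theta}$ by a surrogate process $W^N_\theta$ in which the fast coordinate is represented through its occupation measure rather than path-by-path, engineered so that the time-$t$ marginal relevant to the output is preserved, i.e. $\E(f(X^N_{\gamma_2,\theta}(t))) = \E(g(W^N_\theta(t)))$ for a fixed $g$ depending only on $f$ and $t$. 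The point of $W^N_\theta$ is that the fast fluctuations are already averaged out, so its law depends smoothly on $\theta$ with a derivative controllable uniformly in $N$. The existence of such a representation and the required regularity of $\theta\mapsto W^N_\theta$ is where I would invoke the connection between the distribution of the occupation measure of the fast chain (generated by $\mathbb{C}^v_\theta$) and a system of multidimensional wave equations, as in \cite{Bruno}; differentiating those equations in $\theta$ using parts (B), (C) and (D) of Assumption \ref{assforsensitivityresult} gives the needed $N$-uniform bounds.

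With $W^N_\theta$ in hand I would couple $W^N_\theta$ and $W^N_{\theta+h}$ through a common family of unit-rate Poisson processes via Kurtz's random time change representation (Chapter 7 of \cite{EK}), and write the finite-difference quotient
\[
\frac{u^N(\theta+h)-u^N(\theta)}{h} = \frac{1}{h}\,\E\big(g(W^N_{\theta+h}(t)) - g(W^N_\theta(t))\big).
\]
The estimates from the previous step, combined with the twice-differentiability of the propensities in $\theta$ and the polynomial growth of $\lambda_k$, $\partial_\theta\lambda_k$ and $\sup_{\xi}|\partial^2_\theta\lambda_k(\cdot,\xi)|$ from Assumption \ref{assforsensitivityresult}(D) and the moment bounds of Lemma \ref{lemma1:app}, should show that this quotient converges as $h\to 0$ \emph{uniformly in} $N$. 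Uniform-in-$N$ convergence in $h$ is precisely what licenses interchanging $\lim_{h\to 0}$ and $\lim_{N\to\infty}$: taking $N\to\infty$ first (for fixed $h$) produces the corresponding difference quotient for the limiting process, whose $h\to 0$ limit is $\partial_\theta\hat u(\theta)$ because the averaged propensities $\hat\lambda_k(\cdot,\theta)$ and the averaged output $f_\theta$, hence their $\theta$-derivatives, appear in the limit. Chaining the two orders of limits then yields \eqref{mainresultequation}.

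I expect the main obstacle to be the construction of $W^N_\theta$ and the proof of $N$-uniform differentiability of its law in $\theta$: this is the step that converts the singular $N^{\beta_k+\gamma_2}$ factors attached to the fast reactions into a well-behaved, averaged sensitivity, and it is where the occupation-measure and wave-equation machinery, together with delicate uniform integrability estimates, are genuinely needed. By comparison, the pointwise convergence $u^N\to\hat u$ and the bookkeeping for the coupling are comparatively routine consequences of Proposition \ref{convergenceresult2}, the growth assumptions, and the moment bounds.
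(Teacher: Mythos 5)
Your plan reproduces the paper's proof architecture in all its essential components: the paper likewise builds a surrogate process $W^N_\theta$ (Section \ref{sec:constrnewprocess}) in which the fast dynamics are averaged out via the occupation-time/wave-equation machinery of \cite{Bruno} (Propositions \ref{mainoccmeasureprop} and \ref{prop_regularity}), proves a marginal-preservation identity (Proposition \ref{prop_preserve}), and couples $W^N_\theta$ with $W^N_{\theta+h}$ through shared unit-rate Poisson processes in the random time change representation. However, two points separate your plan from what the paper does, and both are substantive. First, the marginal-preserving output function cannot be ``a fixed $g$ depending only on $f$ and $t$'': in Proposition \ref{prop_preserve} it is $f^N_\theta$ of \eqref{defn_ftheta}, which depends on both $N$ and $\theta$ (through $\beta^N_\theta$ and $\rho^N_{0,\theta}$). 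This explicit $\theta$-dependence is not cosmetic; it generates the separate term $S^{N,1}_\theta(f,t)=\E\left(\partial f^N_\theta(W^N_\theta(t))/\partial\theta\right)$ in the paper's decomposition \eqref{snftheta_expn1}, whose $N\to\infty$ limit $\E\left(\partial f_\theta(\hat{X}_\theta(t))/\partial\theta\right)$ is one of the three pieces constituting the right-hand side of \eqref{mainresultequation}. A finite-difference computation built on a $\theta$-independent $g$ would omit this contribution and converge to the wrong quantity.

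Second, your limit-interchange mechanism is genuinely different from the paper's, and it is precisely the step you assert rather than derive. You propose to prove that the difference quotients converge to $\partial_\theta u^N(\theta)$ \emph{uniformly in} $N$ and then invoke a Moore--Osgood interchange together with the pointwise convergence $u^N\to\hat u$. The paper never establishes such uniformity. Instead it takes $h\to 0$ first at fixed $N$, decomposing the derivative into $S^{N,1}_\theta + A^N_\theta + B^N_\theta$, computes the $N\to\infty$ limit of each piece using Lemma \ref{limitinglemma} and Proposition \ref{prop_limitresult} (the analysis of $B^N_\theta$, via the decoupling time $\gamma^N_h$, the strong Markov property and the quantities $R^N_{\theta,h}$, is the delicate part), and then matches the total against an \emph{independently derived} explicit formula \eqref{proof:simplrhs}--\eqref{proof:simplrhs2} for $\partial_\theta\E(f_\theta(\hat{X}_\theta(t)))$, which comes from the coupling-based sensitivity representation of \cite{Gupta} and involves the remainder terms $R_{k,\theta}$ of \eqref{mainp:defnrktheta}. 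Your uniformity claim would in effect require $N$-uniform control of second $\theta$-derivatives of the law of $W^N_\theta$; but Proposition \ref{prop_regularity} controls only first derivatives, so you would need a second-order extension of the wave-equation analysis (differentiating \eqref{mainodeforbeta} twice in $\theta$, using the second-derivative hypothesis in part (D) of Assumption \ref{assforsensitivityresult}), which your sketch does not supply. Relatedly, the differentiability of $\hat u(\theta)=\E(f_\theta(\hat{X}_\theta(t)))$, which your chaining argument takes for granted, is itself a result that must be (and in the paper is) proved via \cite{Gupta}. So while your architecture is the right one, the pivotal interchange step is a gap as written.
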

\begin{remark}
\label{rem:mildextension}
This theorem will also hold if the function $f$ depends on the parameter $\theta$, as long as the dependence is continuously differentiable. This will be evident from the proof of the theorem.
\end{remark}

Recall that the reaction dynamics for the orginal model in the reference time-scale $\gamma_2$ is given by $X^{N_0}_{\gamma_2,\theta}$. If the output of interest is captured by function $f$, then we are interested in estimating the parameter sensitivity $S^{N_0}_{\gamma_2, t}(f,t)$ defined by \eqref{paramsensn0}.
As explained in Section \ref{sec:intro}, direct estimation of $S^{N_0}_{\gamma_2, t}(f,t)$ is often infeasible because simulations of the process $X^{N_0}_{\gamma_2,\theta}$ are prohibitively expensive. However simulations of the reduced model dynamics $\hat{X}_\theta$ is much cheaper, allowing us to easily estimate the right side of \eqref{mainresultequation}, using known methods \cite{IRN,KSR1,KSR2,DA,Gupta}. The main message of Theorem \ref{mainsensitivityresult} is that for large values of $N_0$
\begin{align}
\label{thm:mainmessage}
S^{N_0}_{\gamma_2, t}(f,t) \approx  \hat{S}_{\theta}(f_\theta,t) := \frac{\partial }{\partial \theta}\E \left(  f_{\theta}( \hat{X}_\theta (t) ) \right),
\end{align}
which allows us to approximately estimate $S^{N_0}_{\gamma_2, t}(f,t)$, in a computationally efficient way.

Observe that in \eqref{mainresultequation}, the function $f_\theta$ may depend on $\theta$ even if the function $f$ does not. If the stationary distribution $\pi^x_\theta$ is known for each $x \in \Pi_2 \mathcal{S}$, then the function $f_\theta$ and the propensities $\hat{\lambda}_k$ can be computed analytically. In this case, the simulations of the process $\hat{X}_\theta$ that are needed for estimating $\hat{S}_{\theta}(f_\theta,t)$, can be carried out using the slow-scale Stochastic Simulation Algorithm \cite{ssSSA}. If $\pi^x_\theta$ is unknown, then one can use nested schemes \cite{weinan1,weinan2} to estimate $f_\theta$ and $\hat{\lambda}_k$ during the simulation runs. In many applications, the ``fast" reactions are uninteresting \cite{PWOLDE,RaoArkin2,HR2} and they do not alter the output function $f$. In such a scenario we can expect $f$ to be invariant under the projection $\Pi_2$ (that is, $f(x) =f(\Pi_2 x)$ for all $x \in \mathcal{S}$) which would imply that the functions $f_\theta$ and $f$ are the same on the space $\Pi_2 \mathcal{S}$. Hence we recover \eqref{senisitivitylimitscommute} from Theorem \ref{mainsensitivityresult}.

\subsection{Estimation of steady-state parameter sensitivities}\label{subsec:ssparamsens}
We now discuss how relation \eqref{intro:sssenss} can be derived using our main result.
In Section \ref{sec:intro} we mentioned the importance of this relation in the context of estimating steady-state parameter sensitivities. Let $\{X_\theta(t) : t \geq 0\}$ be an ergodic $\mathcal{S}$-valued Markov process with generator 
\begin{align*}
\mathbb{C}_\theta f(x) = \sum_{k=1}^K \lambda_k(x,\theta) (f(x + \zeta_k)  - f(x)) \textnormal{ for any } f \in \mathcal{D}(\mathbb{C}_\theta )  =\mathcal{B}( \mathcal{S}),
\end{align*}
and stationary distribution $\pi_\theta$. If we define another process $X^N_\theta$ by
\begin{align}
\label{defn:fastprocessfromslow}
X^N_\theta(t) = X_\theta(N t)  \textnormal{ for } t \geq 0,
\end{align}
then $X^N_\theta$ represents the dynamics of a multiscale network with $\beta_k=1$ for each $k=1,\dots,K$. For this network, clearly $\gamma_2 = 0, \Gamma_2 = \emptyset$ and $\Pi_2 \mathcal{S} =\{0\}$. From Theorem \ref{mainsensitivityresult} we obtain
\begin{align*}
\lim_{N \to \infty} \frac{\partial }{\partial \theta} \E \left(  f( X^N_{\theta} (t) ) \right)=  \frac{d}{ d \theta } \left( \int_{\mathcal{S} } f(x) \pi_\theta(dx)   \right), 
\end{align*}
for any $t>0$. Hence \eqref{intro:sssenss} immediately follows from \eqref{defn:fastprocessfromslow}.

\subsection{Sensitivity estimation with multiple reduction steps} \label{subsec:ssparamsensmultreductions}

We have presented Theorem \ref{mainsensitivityresult} in the setting of Section \ref{sec:secondtimescaleconv}, where a \emph{single-step} reduction procedure was described to obtain a ``reduced" model ( with dynamics $\hat{X}_\theta$) from the original model (with dynamics $X^{N_0}_{\gamma,\theta}$), in the reference time-scale $\gamma = \gamma_2$. As mentioned in Section \ref{sec:highertimescaleconv}, there are examples of multiscale networks where many steps of model reduction may be required to arrive at a \emph{sufficiently} simple model. It is interesting to know that even in such cases, the main approximation relationship \eqref{thm:mainmessage} that falls out of Theorem \ref{mainsensitivityresult}, will continue to hold. To illustrate this point, we now consider an example where two-steps of model reduction are needed for sensitivity estimation.

Recall the description of a multiscale network from Section \ref{sec:intro}.
Let $\gamma_1,\gamma_2$ and $\gamma_3$ be real numbers such that $\gamma_3 > \gamma_2 > \gamma_1$. Suppose that the sets $\Gamma_1,\Gamma_2$ and $\Gamma_3$ form a partition of the reaction set $\{1,\dots,K\}$, and for each $k \in \Gamma_i$, we have $\beta_k = -\gamma_i$ for $i=1,2,3$. The dynamics of the model in the reference time-scale $\gamma$ is given by the process $X^{N_0}_{\gamma,\theta}$ whose random time change representation is 
\begin{align}
\label{mas:approx1}
X^{N_0}_{ \gamma,\theta}(t) & = \sum_{k \in \Gamma_1} Y_k \left(  N_0^{\gamma -\gamma_1 }  \int_{0}^t \lambda_k \left(   X^{N_0}_{ \gamma,\theta}(s)  ,\theta \right) ds \right) \zeta_k  
+ \sum_{k \in \Gamma_2} Y_k \left(  N_0^{\gamma - \gamma_2}  \int_{0}^t \lambda_k \left(   X^{N_0}_{ \gamma,\theta}(s)  ,\theta \right) ds \right) \zeta_k \\
&+ \sum_{k \in \Gamma_3} Y_k \left( N_0^{\gamma -\gamma_3 }   \int_{0}^t \lambda_k \left(   X^{N_0}_{ \gamma,\theta}(s)  ,\theta \right) ds \right) \zeta_k, \notag
\end{align}
where $\{Y_k : k =1,\dots,K\}$ is a family of independent unit rate Poisson processes. Clearly this multiscale network has three time-scales $\gamma_1,\gamma_2$ and $\gamma_3$. Suppose we want to estimate the sensitivity value $S^{N_0}_{\gamma, t}(f,t)$ (given by \eqref{paramsensn0}) at the reference time-scale $\gamma= \gamma_3$. Observe that for this time-scale, the reactions in both the sets $\Gamma_1$ and $\Gamma_2$ are ``fast", but the reactions in $\Gamma_1$ are ``faster" than those in $\Gamma_2$. Ideally we would like to estimate $S^{N_0}_{\gamma_3, t}(f,t)$ using a reduced model which only involves reactions in $\Gamma_3$. It is possible to obtain such a reduced model by applying the reduction procedure \emph{twice}. We now demonstrate that even with this \emph{second-order} reduced model, the main approximation relationship \eqref{thm:mainmessage} will still hold.

Replacing $N_0^{\gamma - \gamma_1}$ by $N_0^{\gamma - \gamma_2} N^{\gamma_2 - \gamma_1}$ in \eqref{mas:approx1}, we get another process $X^{N_0,N}_{ \gamma,\theta}$ defined by 
\begin{align}
\label{mas:approx2}
X^{N_0,N}_{ \gamma,\theta}(t) & = \sum_{k \in \Gamma_1} Y_k \left(N^{\gamma_2 - \gamma_1}  N_0^{\gamma - \gamma_2}  \int_{0}^t \lambda_k \left( X^{N_0,N}_{ \gamma,\theta}(s)  ,\theta \right)ds \right) \zeta_k  
+ \sum_{k \in \Gamma_2} Y_k \left(  N_0^{\gamma - \gamma_2}  \int_{0}^t \lambda_k \left(   X^{N_0,N}_{ \gamma,\theta}(s)  ,\theta \right)  ds \right) \zeta_k \notag \\
&+ \sum_{k \in \Gamma_3} Y_k \left( N_0^{\gamma -\gamma_3 }   \int_{0}^t \lambda_k \left(   X^{N_0,N}_{ \gamma,\theta}(s)  ,\theta \right)  ds \right) \zeta_k. 
\end{align}
Certainly for large values of $N_0$ we have
\begin{align}
\label{sens:msapp1}
S^{N_0}_{\gamma_3, t}(f,t) \approx \lim_{N \to \infty} \frac{ \partial  }{\partial \theta} \E \left(  f\left(  X^{N_0,N}_{ \gamma_3,\theta}(t) \right) \right).
\end{align} 
Observe that the process $X^{N_0,N}_{ \gamma_3,\theta}$ can be treated in the same way as the process $X^N_{\gamma_2,\theta}$ in Theorem \ref{mainsensitivityresult}. Suppose that the conditions of this theorem are satisfied. We can construct a projection $\Pi_2$ satisfying \eqref{fastreactionsdonotaffect} such that the process $\Pi_2 X^{N_0,N}_{ \gamma_3,\theta}$ has a well-behaved limit as $N \to \infty$. For any $v \in \Pi_2 \mathcal{S}$ let $\pi^v_\theta$ be the stationary distribution for the Markov process with generator $\mathbb{C}^v_\theta$ (see \eqref{defn_cvtheta}). Define $\bar{f}_\theta$ by \eqref{defn_fthetalimit} and for each $k \in \Gamma_2 \cup \Gamma_3$ let $\bar{\lambda}_k$ be given by \eqref{defn_lambdahattheta}. Using Theorem \ref{mainsensitivityresult} we can conclude that
\begin{align}
\label{sens:msapp2}
 \lim_{N \to \infty} \frac{ \partial  }{\partial \theta} \E \left(  f\left(  X^{N_0,N}_{ \gamma_3,\theta}(t) \right) \right) = \frac{ \partial  }{\partial \theta} \E \left(  \bar{f}_\theta \left(  \bar{X}^{N_0}_{ \gamma_3,\theta}(t) \right) \right),
\end{align} 
where $\bar{X}^{N_0}_{ \gamma_3,\theta}$ is the $\Pi_2 \mathcal{S}$-valued process given by
\begin{align*}
\bar{X}^{N_0}_{ \gamma_3,\theta}(t) & = \sum_{k \in \Gamma_2} Y_k \left(  N_0^{\gamma_3 - \gamma_2}  \int_{0}^t \bar{\lambda}_k \left(  \bar{ X }^{N_0}_{ \gamma_3,\theta}(s)  ,\theta \right)  ds \right) \Pi_2  \zeta_k + \sum_{k \in \Gamma_3} Y_k \left(    \int_{0}^t \bar{\lambda}_k \left(   \bar{ X }^{N_0}_{ \gamma_3,\theta}(s)  ,\theta \right)  ds \right) \Pi_2  \zeta_k. 
\end{align*}
Substituting $N_0$ by $N$ we get another process $\bar{X}^{N}_{ \gamma_3,\theta}$ which can again be dealt in the same way as the process $X^N_{\gamma_2,\theta}$ in Theorem \ref{mainsensitivityresult}. Moreover for large values of $N_0$,
\begin{align}
\label{sens:msapp5}
 \frac{ \partial  }{\partial \theta} \E \left(  \bar{f}_\theta \left(  \bar{X}^{N_0}_{ \gamma_3,\theta}(t) \right) \right) \approx \lim_{N \to \infty} \frac{ \partial  }{\partial \theta} \E \left(  \bar{f}_\theta \left(  \bar{X}^{N}_{ \gamma_3,\theta}(t) \right) \right).
\end{align} 
Assuming that the conditions of Theorem \ref{mainsensitivityresult} hold, we can construct a projection $\Pi_3$, such that $\Pi_3 \Pi_2 \zeta_k = \bar{0}_d$ for all $k \in \Gamma_2$, and the process $\Pi_3\bar{X}^{N}_{ \gamma_3,\theta}$ has a well-behaved limit as $N \to \infty$. For any $w \in \Pi_3 \Pi_2 \mathcal{S}$, let $\mu^w_\theta$ be the stationary distribution for the Markov process with generator
\begin{align*}
\mathbb{C}^w_\theta g(z) = \sum_{k \in \Gamma_2} \bar{ \lambda}_k(w+z,\theta) \left( g(w + \Pi_2\zeta_k) -g(z)  \right)  \ \textnormal{ for } g \in  \mathcal{D}(\mathbb{C}^w_\theta ) = \mathcal{B}(\mathbb{H}_w ),
\end{align*}
where the definition of $\mathbb{H}_w$ is similar to \eqref{defn_ev}. Define
\begin{align*}
\hat{f}_{\theta}(w) = \int_{\mathbb{H}_w} \bar{f}_\theta(w+y) \mu^{w}_{\theta}(dy) \ \textnormal{ and } \ \hat{\lambda}_{k} ( w  ,\theta ) =  \int_{\mathbb{H}_w} \bar{\lambda}_k(w+y,\theta )  \mu^{w}_{\theta}(dy),
\end{align*}
for each $k \in \Gamma_3$. From Theorem \ref{mainsensitivityresult} we get
\begin{align}
\label{sens:msapp3}
 \lim_{N \to \infty}  \frac{ \partial  }{\partial \theta} \E \left(  \bar{f}_\theta \left(  \bar{X}^{N}_{ \gamma_3,\theta}(t) \right) \right) =  \frac{ \partial  }{\partial \theta} \E \left(  \hat{f}_\theta \left(  \hat{X}_{\theta}(t) \right) \right),
\end{align}   
where $\hat{X}_\theta$ is the process given by
\begin{align*}
\hat{X}_{ \theta}(t) & = \sum_{k \in \Gamma_3} Y_k \left(    \int_{0}^t \hat{\lambda}_k \left( \hat{X}_{ \theta}(s) ,\theta \right)  ds \right) \Pi_3 \Pi_2  \zeta_k. 
\end{align*}
Combining \eqref{sens:msapp1}, \eqref{sens:msapp2}, \eqref{sens:msapp5} and \eqref{sens:msapp3}, we get that for large values of $N_0$
\begin{align}
\label{sens:msapp3:final}
  S^{N_0}_{\gamma_3, t}(f,t) \approx \frac{ \partial  }{\partial \theta} \E \left(  \hat{f}_\theta \left(  \hat{X}_{\theta}(t) \right) \right).
\end{align}
This shows that the main approximation relationship (\eqref{thm:mainmessage}) that arises from Theorem \ref{mainsensitivityresult} will hold even with a reduced model obtained after two steps of model reduction.
Observe that the reactions in $\Gamma_3$ are ``natural" for the time-scale $\gamma_3$, and the reduced model corresponding to $\hat{X}_{\theta}$ only consists of these reactions. Hence the process $\hat{X}_{\theta}$ is easy to simulate and $ S^{N_0}_{\gamma_3, t}(f,t) $ can be easily estimated using \eqref{sens:msapp3:final}.   

\section{Proofs}\label{sec:proof}

We mentioned in Section \ref{sec:intro} that the proof of our main result, Theorem \ref{mainsensitivityresult}, will require many steps. We now describe these steps in detail. In Section \ref{sec:fmcs} we show some regularity properties of the distributions of weighted occupation times for finite Markov chains with \emph{fast} parameter-dependent rates. For this, we exploit certain connections between the distribution of weighted occupation times and multi-dimensional wave equations (see \cite{Bruno}).
These regularity properties allows us to later argue that the distribution of the weighted occupation times for the ``fast" sub-network of our multiscale network, is differentiable with respect to $\theta$, and the derivative operation commutes with the limt $N \to \infty$.
In Section \ref{sec:constrnewprocess}, we construct a ``new" process $W^N_\theta$, which captures the one-dimensional distribution of the process $X^N_{\gamma_2,\theta}$, in the sense described in Section \ref{sec:intro}. The main difference between $X^N_{\gamma_2,\theta}$ and $W^N_\theta$, is that the dynamics of the fast sub-network is \emph{averaged} out in the process $W^N_\theta$, making it easier to work with. In particular the process $W^N_\theta$ is well-behaved limit as $N \to \infty$ (see Proposition \ref{prop_limitresult}), unlike the process $X^N_{\gamma_2,\theta}$.  
The proof of Theorem \ref{mainsensitivityresult} is given in Section \ref{sec:maintheoremproof}. The main idea of the proof is to couple the processes $W^N_\theta$ and $W^N_{\theta+h}$, in such a way, that it allows us to compute a double-limit of the form
\begin{align*}
\lim_{N \to \infty} \lim_{h\to 0}\frac{ \E\left( f^N_{\theta +h} ( W^N_{\theta+h} (t) )  \right) - \E\left( f^N_{\theta} ( W^N_{\theta} (t) )  \right)  }{ h},
\end{align*}
for some functions $f^N_{\theta } $ and $f^N_{\theta+h} $ that depend on our output function $f$. The results from Section \ref{sec:constrnewprocess} will imply that this quantity is equal to the left-hand side of \eqref{mainresultequation}. On the other hand, using Dynkin's formula (see Lemma 19.21 in \cite{Kal}) and some coupling arguments, we will show that this quantity is also equal to the right-side of \eqref{mainresultequation}, thereby proving Theorem \ref{mainsensitivityresult}.

\subsection{Weighted occupation times of finite Markov chains}\label{sec:fmcs}

Let $\{ Z(t) : t \geq 0\}$ be a continuous time Markov chain on a finite state space $\mathcal{E} =\{e_1,\dots,e_m\}$ and with generator 
\begin{align*}
\mathbb{A} f(z) = \sum_{k =1 }^{K}  \lambda_k(z)  \left( f(z+ \zeta_k)  -f(z)\right) \ \textnormal{ for all } f \in \mathcal{D}(\mathbb{A}) = \mathcal{B}(\mathcal{E}).
\end{align*}
Here $\lambda_1,\dots, \lambda_K$ are positive functions on $\mathcal{E} $.
For this Markov chain the $Q$-matrix (matrix of transition rates) is given by 
\begin{align*}
Q_{ij} = \left\{
\begin{tabular}{cc}
$\lambda_k(e_i)$ & if $i \neq j$ and $e_j = e_i + \zeta_k$  \\  
$-\sum_{k = 1}^K \lambda_k(e_i)$ & if $i = j$ \\
$0$ & otherwise.
\end{tabular} \right.
\end{align*}

For a function $\Lambda : \mathcal{E}\to [0,\infty)$ define
\begin{align}
\label{defvt}
V(t) = \int_{0}^{t} \Lambda(Z(s))ds,
\end{align}
then $V(t)$ is essentially the weighted occupation time of the process $Z$, where the weight is given by the function $\Lambda$.
For each $i = 1,\dots,m$ define $p_i, \beta_i : \R_+  \to [0,1]$ by
\begin{align*}
\beta_i(t) = \E\left( \ind_{ \{ Z(t) = e_i  \}}   \exp( -V(t) ) \right) \ \textnormal{ and  } \ p_i(t)= \P(Z(t) =e_i ).
\end{align*}
Note that $\beta_i(t) $ can be seen as the \emph{Laplace Transform} of the distribution of $V(t)$ on the event $Z(t) =e_i$.
Let $p(t)$ and $\beta(t)$ denote the vectors
\begin{align*}
p(t) = (p_1(t),\dots, p_m(t))  \  \textnormal{ and } \ \beta(t) = (\beta_1(t),\dots, \beta_m(t)).
\end{align*}
The definition of matrix $Q$ implies that
\begin{align}
\label{prob_evolution}
\frac{d p (t) }{dt } =  Q^T p(t).
\end{align}
The next proposition describes the dynamics of $\beta$.
\begin{proposition}
\label{mainoccmeasureprop}
The function $\beta$ satisfies the following ordinary differential equation
\begin{align*}
\frac{d \beta (t) }{dt } = \left( Q^T -  D \right)\beta(t),
\end{align*}
where $D$ is the $m  \times m$ diagonal matrix with entries $\Lambda(e_1),\dots,\Lambda(e_m)$.
\end{proposition}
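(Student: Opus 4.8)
The plan is to recognize the claim as a Feynman--Kac type identity and to derive it by combining the martingale characterization of the generator $\mathbb{A}$ with an integration-by-parts for the multiplicative weight $e^{-V(t)}$.

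First I would fix an arbitrary test function $f \in \mathcal{B}(\mathcal{E})$ and consider the product $Y_f(t) = f(Z(t)) e^{-V(t)}$. Since $\mathcal{E}$ is finite, $f$ and $\Lambda$ are bounded and the martingale problem for $\mathbb{A}$ is well-posed, so $M_f(t) = f(Z(t)) - \int_0^t \mathbb{A}f(Z(s))\,ds$ is a martingale. The weight process $e^{-V(t)}$ is continuous and of finite variation with $d\, e^{-V(t)} = -\Lambda(Z(t)) e^{-V(t)} dt$, because $V$ is absolutely continuous with density $\Lambda(Z(s))$. Hence the product rule carries no bracket term and gives
\[ d Y_f(t) = e^{-V(t)} \mathbb{A}f(Z(t))\,dt - \Lambda(Z(t)) f(Z(t)) e^{-V(t)}\,dt + e^{-V(t)}\,dM_f(t). \]
Integrating over $[0,t]$ and taking expectations, the last term drops out (its integrand $e^{-V(s)}$ is bounded by $1$ and predictable, so $\int_0^\cdot e^{-V(s)}\,dM_f(s)$ is a martingale started at $0$), which yields, for every $f$,
\[ \frac{d}{dt}\,\E\big( f(Z(t)) e^{-V(t)} \big) = \E\big( \mathbb{A}f(Z(t)) e^{-V(t)} \big) - \E\big( \Lambda(Z(t)) f(Z(t)) e^{-V(t)} \big). \]

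Next I would specialize to the indicator functions $f = \ind_{\{e_i\}}$, for which $\E( f(Z(t)) e^{-V(t)} ) = \beta_i(t)$. A direct computation from the definition of $\mathbb{A}$ (equivalently of $Q$) gives $\mathbb{A}\ind_{\{e_i\}}(e_l) = Q_{li}$, so the first expectation on the right becomes $\sum_l Q_{li}\beta_l(t) = (Q^T\beta(t))_i$, while the second becomes $\Lambda(e_i)\beta_i(t) = (D\beta(t))_i$. Collecting the $m$ scalar identities into vector form gives exactly $d\beta(t)/dt = (Q^T - D)\beta(t)$, as claimed.

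The only genuinely delicate points are the bookkeeping in the product rule and the interchange of differentiation and expectation; both are harmless here because the finiteness of $\mathcal{E}$ makes every quantity bounded ($|f| \le \max_l |f(e_l)|$, $0 < e^{-V(t)} \le 1$, $\Lambda \le \max_l \Lambda(e_l)$), so dominated convergence applies and the stochastic integral against $dM_f$ is a true martingale. If one prefers to avoid stochastic calculus altogether, the same ODE follows from an elementary first-principles expansion: conditioning on $Z(t)$, writing $e^{-V(t+h)} = e^{-V(t)}\, e^{-\int_t^{t+h}\Lambda(Z(s))ds}$, using $\P(Z(t+h)=e_j \mid Z(t)=e_l) = \delta_{lj} + Q_{lj}h + o(h)$ together with $e^{-\int_t^{t+h}\Lambda} = 1 - \Lambda(Z(t))h + o(h)$, and then collecting the terms of order $h$. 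This route makes the appearance of $Q^T$ (transitions \emph{into} $e_i$) and of the diagonal penalty $D$ (the weight accrued while sitting at $e_i$) especially transparent, and I expect the uniform control of the $o(h)$ remainders --- again trivial on a finite state space --- to be the main thing to verify.
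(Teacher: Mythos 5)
Your proof is correct, but it takes a genuinely different route from the paper. The paper does not argue via martingales at all: it invokes the result of the cited reference (Bruno) that the joint distribution functions $F_i(t,x) = \P(Z(t)=e_i,\, V(t)>x)$ satisfy a system of multi-dimensional wave equations, carefully tracks the atoms of the law of $V(t)$ at the points $r_1 t,\dots,r_l t$, writes $\beta_i(t)$ as a sum of Stieltjes integrals of $e^{-x}\,\partial F_i/\partial x$ over the continuity intervals, and then differentiates in $t$ and integrates by parts in $x$ to cancel the boundary and atomic contributions. Your argument is the standard Feynman--Kac one: apply the semimartingale product rule to $f(Z(t))e^{-V(t)}$ (no bracket term, since $e^{-V}$ is continuous and of finite variation), kill the stochastic integral against $dM_f$ by taking expectations, and then specialize to $f=\ind_{\{e_i\}}$, using $\mathbb{A}\ind_{\{e_i\}}(e_l)=Q_{li}$ to identify the $Q^T$ term and the weight decay to identify the $-D$ term; your alternative Chapman--Kolmogorov expansion over $[t,t+h]$ is an equally valid elementary variant. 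What your approach buys is brevity and self-containedness: it needs no external regularity input (differentiability of the $F_i$, the wave-equation system, the structure of the discontinuities), and it extends verbatim to countable state spaces with bounded rates, which is relevant to the paper's own Remark on that extension. What the paper's approach buys is the explicit link between occupation-time distributions and wave equations, which the authors advertise as a conceptual ingredient of the work, together with the intermediate identity expressing $\beta_i$ in terms of the $F_i$; neither is needed for the proposition itself, so for the statement as posed your derivation is the leaner one. The only points you should make fully explicit are (i) that the integrand against $de^{-V(s)}$ should a priori be $f(Z(s-))$, which agrees with $f(Z(s))$ for Lebesgue-a.e.\ $s$ since $Z$ has countably many jumps, and (ii) that $t\mapsto \E\bigl(\mathbb{A}f(Z(t))e^{-V(t)}\bigr)$ is continuous (because a fixed time is a.s.\ not a jump time), so the fundamental theorem of calculus gives a genuine derivative at every $t$ rather than almost everywhere; both are immediate on a finite state space.
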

\begin{proof}
Let $r_1,\dots,r_l$ be $l$ distinct values in the set $\{  \Lambda(e_1) ,\dots, \Lambda(e_m)\}$, arranged in the ascending order. For each $i=1,\dots,(l-1)$ let $B_i = \{e \in \mathcal{E} : \Lambda( e ) = r_i\}$.
For each $i=1,\dots,m$ define $F_i : \R_+ \times \R  \to [0,1]$ by
\begin{align*}
F_i(t,x) = \P\left(  Z(t) = e_i , V(t) > x   \right).
\end{align*}
The random variable $V(t)$ (given by \eqref{defvt}) can only take values between $r_1t$ and $r_l t$. Hence
\begin{align}
\label{icfordistr}
F_i(t, r_l t)=0 \ \textnormal{ and } \  F_i(t,r_1 t -) = \lim_{h \to 0^{-} } F_i(t,r_1 t +h) = \P(  Z(t) = e_i ).
\end{align}
It has been shown in \cite{Bruno} that the distribution of the real-valued random variable $V(t)$ is continuous in the interval $[r_1t ,r_l t]$, except at 
points $r_1t ,\dots, r_l t$. Whenever $x = r_j t$ for some $j =1,\dots,l$, the function $F_i$ has a discontinuity of size
\begin{align*}
F_i(x,r_j t ) - F_i(x,r_j t -) = -\P\left( Z(t) = e_i , V(t) = r_j  t \right).
\end{align*}
Moreover, the event $\{ V(t) = r_j  t \}$ can only happen if $Z(s) \in B_j$ for all $s \in [0,t]$. Therefore $\P\left( Z(t) = e_i , V(t) = r_j  t \right)$ is non-zero only if $e_i \in B_j$ and hence
\begin{align}
\label{sumofdiscont.}
  \sum_{j=1}^l  g(r_j)(  F_i( t ,r_j t- ) - F_i( t ,r_j t) ) \notag &= \sum_{j=1}^l  g(r_j) \P\left( Z(t) = e_i , V(t) = r_j  t \right)  \notag \\
 & =  g( \Lambda(e_i) ) \P\left( Z(t) = e_i , V(t) = \Lambda(e_i) t \right),
\end{align}
for any $g: \R_+ \to  \R_+$. It is shown in \cite{Bruno} that on the set $\mathcal{R} = \{(t,x) : t > 0 \textnormal{ and } x \in (r_{j-1} t , r_j t ) , \  j =2,\dots,l \}$, each $F_i$ is continuously differentiable and the family of functions $\{F_i : i =1,\dots,m\}$ satisfies the following system of multi-dimensional wave equations
\begin{align}
\label{systemofpdes}
\frac{ \partial F_i(t,x) }{ \partial t } = -\Lambda(e_i) \frac{ \partial F_i(t,x) }{ \partial x } + \sum_{k =1}^{m} F_k(t,x)Q_{ki}, \ \textnormal{  for  } i =1,\dots,m.
\end{align}

For each $i=1,\dots,m$ we can write $\beta_i(t)$ as
\begin{align}
\label{formulaforbeta1}
\beta_i(t) & = \E\left( \ind_{ \{ Z(t) =e_i  \} }  e^{-V(t)} \right) = e^{-\Lambda(e_i) t} \P\left( Z(t) = e_i , V(t) = \Lambda(e_i)  t \right)  - \sum_{j=2}^l \int_{r_{j-1} t}^{r_j t} e^{-x} \left(  \frac{ \partial F_i(t,x)  }{ \partial x }  \right) dx.
\end{align}
Using integration by parts, \eqref{icfordistr} and \eqref{sumofdiscont.} we get
\begin{align*}
\sum_{j=2}^l \int_{r_{j-1} t}^{r_j t} e^{-x} \left(  \frac{ \partial F_i(t,x)  }{ \partial x }  \right) dx & = \sum_{j=2}^l \left(  e^{  -r_j t} F_i( t ,r_j t- ) -  
e^{  -r_{j-1} t} F_i( t ,r_{j-1} t  )  \right)
+  \sum_{j=2}^l \int_{r_{j-1} t }^{r_j t} e^{-x} F_i(t,x)dx \\
& =\sum_{j=2}^l \left(  e^{  -r_j t} F_i( t ,r_j t ) -  e^{  -r_{j-1} t} F_i( t ,r_{j-1} t  )  \right)
+  \sum_{j=2}^l \int_{r_{j-1} t }^{r_j t} e^{-x} F_i(t,x)dx\\
& + \sum_{j=2}^l  e^{  -r_j t} (  F_i( t ,r_j t- ) - F_i( t ,r_j t) ) \\
& =-  e^{  -r_{1} t} F_i( t ,r_{1} t  )
+  \sum_{j=2}^l \int_{r_{j-1} t }^{r_j t} e^{-x} F_i(t,x)dx + \sum_{j=2}^l  e^{  -r_j t} (  F_i( t ,r_j t- ) - F_i( t ,r_j t) ) \\
& =   e^{  -r_{1} t} \left(  F_i( t ,r_{1} t - )  - F_i( t ,r_{1} t  )  \right) -  e^{  -r_{1} t} F_i( t ,r_{1} t - )  \\&
+  \sum_{j=2}^m \int_{r_{j-1} t }^{r_j t} e^{-x} F_i(t,x)dx  + \sum_{j=2}^l  e^{  -r_j t} (  F_i( t ,r_j t- ) - F_i( t ,r_j t) )  \\
& =  -  e^{  -r_{1} t} \P(  Z(t) = e_i )+  \sum_{j=2}^l \int_{r_{j-1} t }^{r_j t} e^{-x} F_i(t,x)dx  \\
&+ e^{-\Lambda(e_i) t} \P\left( Z(t) = e_i , V(t) = \Lambda(s_i)  t \right) .
\end{align*}
Substituing the above expression in \eqref{formulaforbeta1} we obtain
\begin{align}
\label{mainformulaforbeta}
\beta_i(t) =  e^{  -r_{1} t} p_i(t) - \sum_{j=2}^l \int_{r_{j-1} t }^{r_j t} e^{-x} F_i(t,x)dx ,
\end{align} 
 where $p_i(t) = \P( Z(t) = e_i )$.
 
 For $i=1,\dots,m$, the functions $p_i$ and $F_i(\cdot,x)$ are differentiable (see \eqref{prob_evolution} and \eqref{systemofpdes}). Hence the function $\beta_i$ is also differentiable. Taking derivative with respect to $t$ in \eqref{mainformulaforbeta} yields
\begin{align*}
\frac{d \beta_i(t) }{ d t } & = -\sum_{j=2}^l \int_{r_{j-1} t }^{r_j t} e^{-x} \frac{ \partial F_i(t,x) }{ \partial t} dx 
- \sum_{j=2}^l \left(  r_j e^{-r_j t} F_i(t,r_j t - ) - r_{j-1} e^{-r_{j-1 } t} F_i(t,r_{j-1} t  )  \right) \\
& - r_1  e^{  -r_{1} t} p_i(t) + e^{  -r_{1} t} \frac{d p_i(t) }{dt}.
\end{align*}
From \eqref{icfordistr} and \eqref{sumofdiscont.} it follows that
\begin{align}
\label{simplformula1}
& \sum_{j=2}^l \left(  r_j e^{-r_j t} F_i(t,r_j t - ) - r_{j-1} e^{-r_{j-1 } t} F_i(t,r_{j-1} t  )  \right) \notag \\
 &= \sum_{j=2}^l \left(  r_j e^{-r_j t} F_i(t,r_j t  ) - r_{j-1} e^{-r_{j-1 } t} F_i(t,r_{j-1} t )  \right) \notag  \\
& +\sum_{j=2}^l  r_j e^{-r_j t} \left( F_i(t,r_j t - ) - F_i(t,r_j t  ) \right) \notag \\
& =- r_1 e^{-r_1 t} F_i(t,r_1 t )  +\sum_{j=2}^l  r_j e^{-r_j t}\P\left( Z(t) = e_i , V(t) = r_j  t \right) \notag  \\
& = - r_1 e^{-r_1 t} p_i(t) + \sum_{j=1}^l  r_j e^{-r_j t}\P\left( Z(t) = e_i , V(t) = r_j  t \right)  \notag \\
& = - r_1 e^{-r_1 t} p_i(t) + \Lambda(e_i) e^{- \Lambda(e_i) t}\P\left( Z(t) = e_i , V(t) = \Lambda(e_i) t \right).
\end{align}
Therefore
\begin{align*}
\frac{d \beta_i(t) }{ d t } & =-\sum_{j=2}^l \int_{r_{j-1} t }^{r_j t} e^{-x} \frac{ \partial F_i(t,x) }{ \partial t} dx 
-  \Lambda(e_i)e^{-\Lambda(e_i)t}\P\left( Z(t) = e_i , V(t) =\Lambda(e_i) t \right)  + e^{  -r_{1} t} \frac{d p_i(t) }{dt}.
\end{align*}
From \eqref{systemofpdes} we get
\begin{align*}
\frac{d \beta_i(t) }{ d t } & =\Lambda(e_i)  \sum_{j=2}^l \int_{r_{j-1} t }^{r_j t} e^{-x} \frac{ \partial F_i(t,x) }{ \partial x} dx
- \sum_{k=1}^m  \left( \sum_{j=2}^l \int_{r_{j-1} t }^{r_j t} e^{-x} F_k(t,x)dx \right) Q_{ki} \\
  &-  \Lambda(e_i)  e^{-\Lambda(e_i)  t}\P\left( Z(t) = e_i , V(t) = \Lambda(e_i)   t \right)  + e^{  -r_{1} t} \frac{d p_i(t) }{dt}  \\
  & = \Lambda(e_i)  \sum_{j=2}^l \int_{r_{j-1} t }^{r_j t} e^{-x} \frac{ \partial F_i(t,x) }{ \partial x} dx
+ \sum_{k=1}^m  \left( \beta_k(t) - e^{-r_1 t} p_k(t) \right) Q_{ki} \\
  &-  \Lambda(e_i) e^{-\Lambda(e_i)  t}\P\left( Z(t) = e_i , V(t) = r_i  t \right)  + e^{  -r_{1} t} \frac{d p_i(t)}{dt} \\
  & =  \Lambda(e_i)  \sum_{j=2}^l \int_{r_{j-1} t }^{r_j t} e^{-x} \frac{ \partial F_i(t,x) }{ \partial x} dx
+ \sum_{k=1}^m  \beta_k(t)  Q_{ki}   -  \Lambda(e_i) e^{-\Lambda(e_i) t}\P\left( Z(t) = e_i , V(t) = r_i  t \right) \\& + e^{  -r_{1} t} \left( \frac{d p_i(t)}{dt}  -  \sum_{k=1}^m  p_k(t) Q_{ki} \right).
\end{align*}
Due to \eqref{prob_evolution}, the last term is $0$ and hence
\begin{align}
\label{righteqnforbetaderivative}
\frac{d \beta_i(t) }{ d t }  =\Lambda(e_i)  \sum_{j=2}^l \int_{r_{j-1} t }^{r_j t} e^{-x} \frac{ \partial F_i(t,x) }{ \partial x} dx
+ \sum_{k=1}^m  \beta_k(t)  Q_{ki}   -  \Lambda(e_i)  e^{-\Lambda(e_i)   t}\P\left( Z(t) = e_i , V(t) =\Lambda(e_i)  t \right).
\end{align}
 Using integration by parts, \eqref{simplformula1} and \eqref{mainformulaforbeta} we obtain
\begin{align*}
\sum_{j=2}^l \int_{r_{j-1} t }^{r_j t} e^{-x} \frac{ \partial F_i(t,x) }{ \partial x} dx
& = \sum_{j=2}^l  \left( e^{-r_j t} F_i(t,r_j t-) -  e^{-r_{j-1} t} F_i(t,r_{j-1} t)    \right) +  \sum_{j=2}^l \int_{r_{j-1} t }^{r_j t} e^{-x} F_i(t,x) dx \\
& =  e^{  -\Lambda(e_i)  t} \P\left( Z(t) = e_i , V(t) = \Lambda(e_i)   t \right) - \beta_i(t).
\end{align*}
Substituting this expression in \eqref{righteqnforbetaderivative} yields
\begin{align*}
\frac{d \beta_i(t) }{ d t } = - \Lambda(e_i) \beta_i(t) + \sum_{k=1}^m  \beta_k(t)  Q_{ki}.  
\end{align*}
This completes the proof of the proposition.
\end{proof}

Using the above proposition, we now establish some regularity properties of the distributions of weighted occupation times for finite Markov chains with \emph{fast} parameter-dependent rates.
Let $\{ Z^N_\theta (t) : t \geq 0 \}$ be a continuous time Markov chain on $\mathcal{E} =\{e_1,\dots,e_m\}$ with generator given by 
\begin{align*}
\mathbb{C}^N_\theta f(z) = N \sum_{k =1 }^{K}  \lambda_k(z,\theta)  \left( f(z+ \zeta_k)  -f(z)\right) \ \textnormal{ for all } f \in \mathcal{D}( \mathbb{C}^N_\theta  ) = \mathcal{B}( \mathcal{E}),
\end{align*}
where the function $\theta \mapsto \lambda_k(z,\theta)$ is continuously differentiable for each $k$ and $z\in \mathcal{E}$. For this Markov chain, the matrix of transition rates is given by $N Q_\theta$ where  
\begin{align*}
Q_{\theta, ij} = \left\{
\begin{tabular}{cc}
$\lambda_k(e_i,\theta)$ & if $i \neq j$ and $e_j = e_i + \zeta_k$  \\  
$-\sum_{k = 1}^K \lambda_k(e_i,\theta )$ & if $i = j$ \\
$0$ & otherwise.
\end{tabular} \right.
\end{align*}
We assume that this Markov chain is ergodic. Then its unique stationary distribution $\pi_\theta$ is a left eigenvector for $Q_\theta$ corresponding to the eigenvalue $0$. Hence
\begin{align}
\label{condonstdistribution}
\pi_\theta Q_\theta = \bar{0}_m \ \textnormal{ and } \ \langle \bar{1}_m , \pi_\theta \rangle =\bar{1}^T_m \pi_\theta = 1.
\end{align}
\begin{remark}
\label{ergodicity}
Due to the ergodicity assumption, the matrix $Q_\theta$ has $0$ as a simple eigenvalue and all its other eigenvalues have strictly negative real parts.
\end{remark}

For a function $\Lambda : \mathcal{E} \times \R \to  [0,\infty)$ define
\begin{align}
\label{defnvt}
V^N_\theta(t) = \int_{0}^{t} \Lambda(Z^N_\theta(s) , \theta  )ds
\end{align}
and let
\begin{align*}
\beta^{N}_{\theta , i}(t) = \E\left( \ind_{ \{ Z^N_\theta(t) = e_i  \}}   \exp( -V^N_\theta(t) ) \right), 
\end{align*}
for each $i=1,\dots,m$. From Proposition \ref{mainoccmeasureprop} it follows that the function $\beta^N_\theta(t) = ( \beta^N_{\theta,1} (t),\dots,\beta^N_{\theta,m} (t) )$ satisfies
\begin{align}
\label{mainodeforbeta}
\frac{d \beta^N_\theta (t) }{dt } = \left( N Q_\theta^T -  D_\theta \right)\beta^N_\theta(t),
\end{align}
where $D_\theta$ is the $m  \times m$ diagonal matrix with entries $\Lambda(e_1,\theta),\dots,\Lambda(e_m,\theta)$. We now define a condition on sequences of functions on $\R_+$.
\begin{condition}
\label{regularitycondition}
For each $N \in \N$, let $f^N$ be a function from $\R_+$ to $\R^m$ and let $\epsilon_N = 1/ \sqrt{N}$. Then the sequence of functions $\{f^N : N \in \N \}$ satisfies this condition if for any $T > 0$
\begin{align*}
\lim_{N \to \infty} \sup_{t \in [\epsilon_N,T ]} \| f^N(t) \| = 0 \ \textnormal{ and } \ \lim_{N \to \infty} \int_{0}^T\| f^N(t) \|dt = 0.
\end{align*}
\end{condition}
The main result of this section is given as the next proposition.
\begin{proposition}
\label{prop_regularity}
Define $\hat{\beta}^N_\theta : [0,\infty) \to \R^m$ by
\begin{align*}
\hat{\beta}^N_{ \theta } (t) = \beta^N_\theta(t) -e^{ -\lambda_\theta t}  \pi_\theta,
\end{align*}
where
\begin{align}
\label{perturbedeigenvalue}
\lambda_\theta = \bar{1}_m^T D_\theta \pi_\theta.
\end{align}
Then the functions $\hat{\beta}^N_{ \theta }$ and $\partial \hat{\beta}^N_{ \theta } /\partial \theta$ satisfy Condition \ref{regularitycondition}.
\end{proposition}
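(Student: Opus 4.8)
The plan is to solve the linear system \eqref{mainodeforbeta} through the spectral decomposition of $M^N_\theta := NQ_\theta^T - D_\theta$ and to exploit the scale separation between its single slow eigenvalue and its fast spectrum. By Remark \ref{ergodicity}, $Q_\theta$ has $0$ as a simple eigenvalue and all other eigenvalues with real part at most $-2c$ for some $c>0$; hence the non-trivial eigenvalues of $NQ_\theta^T$ have real part at most $-2cN$, while the simple $0$ eigenvalue (with right eigenvector $\pi_\theta$ and left eigenvector $\bar{1}_m$, by \eqref{condonstdistribution} and $Q_\theta\bar{1}_m=\bar{0}_m$) perturbs under the bounded perturbation $-D_\theta$ to a simple eigenvalue $\mu^N_\theta = -\lambda_\theta + O(1/N)$, the first-order shift being $-\bar{1}_m^T D_\theta\pi_\theta=-\lambda_\theta$ by \eqref{perturbedeigenvalue}. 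Writing $\Pi^N_s = v^N_\theta (u^N_\theta)^T$ for the associated rank-one spectral projection, with $v^N_\theta = \pi_\theta + O(1/N)$, $u^N_\theta = \bar{1}_m + O(1/N)$ and $(u^N_\theta)^Tv^N_\theta=1$, and $\Pi^N_f = I_m - \Pi^N_s$, resolvent estimates across the $O(N)$-wide gap give $\|e^{M^N_\theta\tau}\Pi^N_f\| \le Ce^{-cN\tau}$ for $\tau\ge 0$ and $\|e^{M^N_\theta\tau}\|\le C$ for $\tau\in[0,T]$, uniformly in $N$. Since $Q_\theta,D_\theta$ are $C^1$ in $\theta$, all this spectral data is continuously differentiable in $\theta$, with $\partial_\theta\mu^N_\theta = -\dot\lambda_\theta + O(1/N)$ and $\partial_\theta u^N_\theta = O(1/N)$; throughout, a dot denotes $\partial/\partial\theta$.

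For $\hat\beta^N_\theta$ I would work not with the explicit solution but with the ODE it satisfies. Using $Q_\theta^T\pi_\theta = \bar{0}_m$ and setting $\alpha_\theta(t):=e^{-\lambda_\theta t}\pi_\theta$, a direct computation from \eqref{mainodeforbeta} gives
\begin{align*}
\frac{d\hat\beta^N_\theta}{dt}(t) = M^N_\theta\,\hat\beta^N_\theta(t) + e^{-\lambda_\theta t}g_\theta, \qquad g_\theta := (\lambda_\theta I_m - D_\theta)\pi_\theta .
\end{align*}
The decisive algebraic fact is $\bar{1}_m^T g_\theta = \lambda_\theta - \bar{1}_m^TD_\theta\pi_\theta = 0$, so $\|\Pi^N_s g_\theta\| = |(u^N_\theta-\bar{1}_m)^Tg_\theta|\,\|v^N_\theta\| = O(1/N)$. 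Applying variation of constants and splitting both the initial term $e^{M^N_\theta t}\hat\beta^N_\theta(0)$ and the forcing integral $\int_0^t e^{M^N_\theta(t-s)}e^{-\lambda_\theta s}g_\theta\,ds$ through $\Pi^N_s+\Pi^N_f$, every slow component carries a factor $O(1/N)$ (because $\bar{1}_m^T\hat\beta^N_\theta(0)=0$, as $\beta^N_\theta(0)$ is a probability vector and $\bar{1}_m^T\pi_\theta=1$, and because $\bar{1}_m^Tg_\theta=0$), whereas every fast component is controlled by $\|e^{M^N_\theta\tau}\Pi^N_f\|\le Ce^{-cN\tau}$. This yields $\sup_{t\in[\epsilon_N,T]}\|\hat\beta^N_\theta(t)\| = O(1/N) + O(e^{-c\sqrt N})$ and $\int_0^T\|\hat\beta^N_\theta(t)\|\,dt = O(1/N)$, which is Condition \ref{regularitycondition} for $\hat\beta^N_\theta$.

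Differentiating the displayed ODE in $\theta$ and writing $\hat w^N_\theta := \partial_\theta\hat\beta^N_\theta$ gives
\begin{align*}
\frac{d\hat w^N_\theta}{dt}(t) = M^N_\theta\,\hat w^N_\theta(t) + \dot M^N_\theta\,\hat\beta^N_\theta(t) + \phi_\theta(t), \qquad \phi_\theta(t) := e^{-\lambda_\theta t}\bigl(\dot g_\theta - t\,\dot\lambda_\theta\, g_\theta\bigr),
\end{align*}
where $\phi_\theta$ again satisfies $\bar{1}_m^T\phi_\theta(t)=0$ and is treated exactly as the forcing above, and $e^{M^N_\theta t}\hat w^N_\theta(0)$ is treated as before using $\bar{1}_m^T\dot\pi_\theta=\partial_\theta(\bar{1}_m^T\pi_\theta)=0$. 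The obstruction is $\dot M^N_\theta\,\hat\beta^N_\theta = (N\dot Q_\theta^T - \dot D_\theta)\hat\beta^N_\theta$, which carries the factor $N$. The $\dot D_\theta$ part is harmless since $\|\hat\beta^N_\theta(s)\|$ is already $O(1/N)$ plus an exponential. For the dangerous piece $N\int_0^t e^{M^N_\theta(t-s)}\dot Q_\theta^T\hat\beta^N_\theta(s)\,ds$ I would substitute the three-term decomposition of $\hat\beta^N_\theta(s)$ obtained above and, crucially, retain the fast decay of the \emph{outer} semigroup as well. Two mechanisms then absorb the $N$. First, $\bar{1}_m^T\dot Q_\theta^T = \bar{0}_m^T$, obtained by differentiating $Q_\theta\bar{1}_m=\bar{0}_m$, forces $(u^N_\theta)^T\dot Q_\theta^T = O(1/N)$ and hence annihilates every slow outer contribution. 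Second, whenever both the inner and outer projections are the fast ones, the two exponentials multiply to $e^{-cN(t-s)}e^{-cNs}=e^{-cNt}$, so the offending term is bounded by $CNt\,e^{-cNt}$; the specific scaling $\epsilon_N = 1/\sqrt N$ in Condition \ref{regularitycondition} is exactly what tames this, since $\sup_{t\ge \epsilon_N} Nt\,e^{-cNt} = \sqrt N\,e^{-c\sqrt N}\to 0$ and $\int_0^T Nt\,e^{-cNt}\,dt = O(1/N)\to 0$. This interplay between the $O(N)$ spectral gap and the $1/\sqrt N$ cutoff is the heart of the argument; once it is in place, collecting all contributions shows that $\hat w^N_\theta$ satisfies Condition \ref{regularitycondition}, completing the proof.
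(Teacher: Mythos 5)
Your proposal is correct, and its skeleton is the same as the paper's: both arguments rest on perturbation of the simple zero eigenvalue of $Q_\theta^T$ under the $O(1/N)$-small perturbation coming from $D_\theta$, on the $O(N)$ spectral gap of the remaining eigenvalues, on variation of constants for \eqref{mainodeforbeta} and its $\theta$-derivative, and on exactly the algebraic cancellations you identify ($\bar{1}_m^T g_\theta=0$, $\bar{1}_m^T \dot Q_\theta^T=\bar{0}_m^T$, $\bar{1}_m^T\hat\beta^N_\theta(0)=0$, $\bar{1}_m^T\dot\pi_\theta=0$), with $\epsilon_N=1/\sqrt N$ playing the identical role of taming $N t e^{-cNt}$. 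The implementation differs, though: you split via the rank-one spectral projection $\Pi^N_s$ of the full matrix $M^N_\theta=NQ_\theta^T-D_\theta$ together with a uniform-in-$N$ semigroup bound $\|e^{M^N_\theta\tau}\Pi^N_f\|\le Ce^{-cN\tau}$ on the complementary range, whereas the paper first takes the inner product with the perturbed left eigenvector $v^N_\theta$ of $B^N_\theta=Q_\theta^T-N^{-1}D_\theta$ (a scalar ODE handled by Gronwall), and then eliminates one coordinate to land on an explicit $(m-1)$-dimensional system governed by the $N$-independent stable matrix $C_\theta$, for which the exponential estimate \eqref{alphacondition} is immediate. What your route buys is that it is coordinate-free and, notably, it makes fully explicit the one delicate step the paper compresses into ``From \eqref{alphacondition} and \eqref{condforhn} we can deduce'': the cross term $N\int_0^t e^{-cN(t-s)}\|\hat\beta^N_\theta(s)\|\,ds$ requires the refined pointwise bound $\|\hat\beta^N_\theta(s)\|\le O(1/N)+Ce^{-cNs}$ (not just Condition \ref{regularitycondition} for $\hat\beta^N_\theta$), producing the $CNte^{-cNt}$ term that the cutoff $\epsilon_N$ then controls; your write-up handles this correctly and more transparently than the paper. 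What the paper's route buys is more elementary linear algebra: by isolating $C_\theta$ it only ever perturbs an $N$-independent stable matrix, while your uniform constants $C,c$ for the family $\{M^N_\theta\}$, asserted via ``resolvent estimates across the $O(N)$-wide gap,'' are true but themselves require a Kato-type perturbation argument to make rigorous — a point you should spell out (it is the analogue of the paper's own slightly glossed claim \eqref{alphacondition}).
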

\begin{remark}
Here $\partial \beta^N_\theta  / \partial \theta$ should be interpreted as the map $t \mapsto \partial \beta^N_\theta(t)  / \partial \theta$.
Of course this proposition can only be true if $\partial \beta^N_\theta(t)  / \partial \theta$ and $\partial \pi_\theta / \partial \theta$ exist. Note that entries of the matrices $Q_\theta$ and $D_\theta$ are differentiable in $\theta$. Hence \eqref{mainodeforbeta} implies the existence of $\partial \beta^N_\theta(t)  / \partial \theta$. Moreover due to the implicit mapping theorem and the relation $\pi_\theta Q_\theta = \bar{0}_d$ (see \eqref{condonstdistribution}) one can also conclude that $\partial \pi_\theta / \partial \theta$ exists.  
\end{remark}
\begin{proof}
We start by defining some notation that will be useful in the proof. We say that a $\R^m$-valued sequence $\{a_N : N \in \N\}$ belongs to class $O(N^{-m})$ for some $m \in \N_0$, if and only if
\begin{align*}
\sup_{N \in \N} N^m \|a_N\| < \infty.
\end{align*}
For two such sequences $\{a_N : N \in \N\}$ and $\{b_N : N \in \N\}$, we will say that $a_N = b_N + O(N^{-m})$ when the sequence $\{(a_N -b_N) : N \in \N\}$ belongs to class $O(N^{-m})$.

For the proof, we can assume without loss of generality, that for each $N$, $Z^N_\theta(0) = e_{i_0}$ for some $i_0 = 1,\dots,m$. This implies that $\beta^N_\theta(0) = (0,\dots,0,1,0,\dots,0)$, where the $1$ is in place $i_0$. Hence
\begin{align}
\label{initialregularity}
\langle \bar{1}_m , \beta^N_\theta(0) - \pi_\theta  \rangle = \langle \bar{1}_m, \beta^N_\theta(0)  \rangle - \langle \bar{1}_m, \pi_\theta \rangle = 0. 
\end{align}
Define a function $h^N_\theta : \R_+ \to \R^m$ by
\begin{align}
\label{defnhntheta}
h^N_\theta(t) = e^{\lambda_\theta t}  \beta^N_\theta(t) - \pi_\theta.
\end{align}
To prove the proposition it is sufficient to show that both $h^N_\theta$ and $\partial h^N_\theta / \partial \theta$ satisfy Condition \ref{regularitycondition}.

From \eqref{mainodeforbeta} we obtain
\begin{align}
\label{main_eqn_hn}
\frac{d h^N_\theta(t) }{dt} = \left( N Q^T_\theta  - D_\theta +  \lambda_\theta I_m\right) h^N_\theta(t) - D_\theta \pi_\theta + \lambda_\theta \pi_\theta, 
\end{align}
where $I_m$ is the $m \times m$ identity matrix. Consider the matrix $B^N_\theta =  Q^T_\theta - N^{-1}D_\theta$, which can be seen as a small perturbation of $Q^T_\theta$ for large values of $N$. The eigenvalues of $B^N_\theta$ is slighly perturbed with respect to the eigenvalues of $Q^T_\theta$ (see \cite{PerTheory}). We know that matrix $Q_\theta^T$ has $0$ as a simple eigenvalue (see Remark \ref{ergodicity}) and the corresponding left eigenvector is $\bar{1}_m$. From Theorem 2.7 in \cite{PerTheory}, we can conclude that $B^N_\theta$ has an eigenvalue at $\lambda^N_\theta$ with the corresponding left eigenvector at $v^N_\theta$, where $\lambda^N_\theta$ and $v^N_\theta$ have the form
\begin{align}
\label{perturbedeigenvalueeigenvector}
\lambda^N_\theta = - \frac{ \lambda_\theta }{N} + O(N^{-2}) \ \textnormal{ and } \ v^N_\theta = \bar{1}_m + O(N^{-1}).
\end{align}
Therefore 
\begin{align*}
(v^N_\theta)^T \left( N Q^T_\theta  - D_\theta +  \lambda_\theta I_m\right) & = N (v^N_\theta)^T B^N_\theta + \lambda_\theta (v^N_\theta)^T   = N \lambda^N_\theta   (v^N_\theta)^T +  \lambda_\theta (v^N_\theta)^T   = \left( N \lambda^N_\theta +  \lambda_\theta   \right) (v^N_\theta)^T.
\end{align*}
Let $S^N_\theta = \langle v^N_\theta , h^N_\theta(t)  \rangle$. Taking inner product with $v^N_\theta$ in \eqref{main_eqn_hn} we get
\begin{align*}
\frac{d S^N_\theta(t) }{dt} =\left( N \lambda^N_\theta +  \lambda_\theta   \right) S^N_\theta(t) + (v^N_\theta)^T \left( - D_\theta \pi_\theta + \lambda_\theta \pi_\theta\right).
\end{align*}
Note that $a^N_\theta := N \lambda^N_\theta +  \lambda_\theta  = O(N^{-1})$ due to \eqref{perturbedeigenvalueeigenvector}. From \eqref{perturbedeigenvalue} and \eqref{condonstdistribution} we can see that
$b^N_\theta := (v^N_\theta)^T \left( - D_\theta \pi_\theta + \lambda_\theta \pi_\theta\right) = \bar{1}_m^T \left( - D_\theta \pi_\theta + \lambda_\theta \pi_\theta\right) + O(N^{-1}) = O(N^{-1})$. Therefore we can write
\begin{align}
\label{sumeqn}
\frac{d S^N_\theta(t) }{dt} = a^N_\theta S^N_\theta(t) + b^N_\theta,
\end{align}
where $\{a^N_\theta\}, \{b^N_\theta\}$ are sequences in $O(N^{-1})$. Using \eqref{initialregularity} we obtain
\begin{align}
\label{sumic}
S^N_\theta(0) = \langle  v^N_\theta, h^N_\theta(0) \rangle =  \langle \bar{1}_m, h^N_\theta(0) \rangle +O(N^{-1}) =  
\langle  \bar{1}_m , \beta^N_\theta(0) \rangle  - \langle  \bar{1}_m, \pi_\theta \rangle +O(N^{-1}) =  O(N^{-1}) .
\end{align}
Pick any $T>0$. From \eqref{sumeqn}, \eqref{sumic} and Gronwall's inequality it follows that
\begin{align*}
\sup_{t \in [0,T]} |S^N_\theta(t) | = \sup_{t \in [0,T]} | \langle v^N_\theta(t) , h^N_\theta(t) \rangle | = O(N^{-1}),
\end{align*}
which also implies that
\begin{align}
\label{sumisorder1}
 \sup_{t \in [0,T]} | \langle \bar{1}_m , h^N_\theta(t) \rangle | = O(N^{-1}).
\end{align}
This allows us to write
\begin{align*}
h^N_{\theta, m}(t) = - \sum_{i=1}^{m-1}  h^N_{ \theta, i }(t) + O(N^{-1}). 
\end{align*}
Let $C_\theta$ be the $(m-1) \times (m-1)$ matrix whose $ij$-th entry is given by
\begin{align*}
C_{\theta, ij} = Q_{\theta,ji} -  Q_{\theta,mi}.
\end{align*}
 If we define
\begin{align*}
P= \left[
\begin{array}{cc}
I_{m-1} & \bar{1}_{m-1} \\
\bar{0}^T_{m-1} & 1 \\
\end{array}
\right] 
\hspace{5pt} \textrm{ and } \hspace{5pt} 
P^{-1}= \left[
\begin{array}{cc}
I_{m-1} & - \bar{1}_{m-1}  \\
\bar{0}^T_{m-1} & 1 \\
\end{array}
\right]
\end{align*}
then using $\bar{1}_m^T Q^T_\theta = \bar{0}^T_m$ we can write 
\begin{align}
\label{relationofgandgbar}
P^T Q^T_\theta  (P^T)^{-1}= \left[
\begin{array}{cc}
C_\theta & v \\
\bar{0}^T_{m-1} & 0 \\
\end{array}
\right], 
\end{align}
where $v$ is some vector in $\R^{m-1}$. The matrix $Q_\theta$ has a simple eigenvalue at $0$ and all its other eigenvalues have strictly negative real parts (see Remark \ref{ergodicity}). This shows that matrix $C_\theta$ is stable.

Let $\bar{h}^N_\theta(t)$ and $\bar{\pi}_\theta$ be vectors containing the first $(m-1)$ components of $h^N(t)$ and $\pi_\theta$. Also let $\bar{D}_\theta$ be the $(m-1) \times (m-1)$ diagonal matrix with entries $\lambda(e_2,\theta),\dots,\lambda(e_m,\theta)$. From \eqref{main_eqn_hn} we get
\begin{align}
\label{eqnforhbarn}
\frac{d \bar{h}^N_\theta(t) }{dt} = \left( N C_\theta  - \bar{D}_\theta +  \lambda_\theta I_{m-1}\right) \bar{h}^N_\theta(t) - \bar{D}_\theta \bar{\pi}_\theta + \lambda_\theta \bar{\pi}_\theta.
\end{align}
Let $C^N_\theta$ be the matrix given by
\begin{align}
\label{defncntheta}
C^N_\theta = C_\theta - \frac{1}{N} \left( \bar{D}_\theta - \lambda_\theta I_{m-1}\   \right).
\end{align}
The stability of matrix $C_\theta$ implies that there exists a $\alpha >0$ such that for any $t \geq 0$ and $N$
\begin{align}
\label{alphacondition}
\| \exp( N C^N_\theta t ) \| \leq \exp( -N \alpha t).
\end{align}
The exact solution of \eqref{eqnforhbarn} is
\begin{align*}
\bar{h}^N_\theta(t) = \exp( N C^N_\theta t ) \bar{h}^N_\theta(0) - \int_{0}^t  \exp( N C^N_\theta (t-s) ) \left(  \bar{D}_\theta \bar{\pi}_\theta - \lambda_\theta \bar{\pi}_\theta \right)ds,
\end{align*}
which implies that
\begin{align*}
\| \bar{h}^N_\theta(t) \|  & \leq  e^{-N \alpha t }  \| \bar{h}^N_\theta(0) \| +  \int_{0}^t  e^{- N \alpha (t-s) }   \|  \bar{D}_\theta \bar{\pi}_\theta - \lambda_\theta \bar{\pi}_\theta \| ds \\
& \leq  e^{-N \alpha t }  \| \bar{h}^N_\theta(0) \| + \frac{  \|  \bar{D}_\theta \bar{\pi}_\theta - \lambda_\theta \bar{\pi}_\theta \|  }{N \alpha}.
\end{align*}
This along with \eqref{sumisorder1} shows that the function $h^N_\theta$ satisfies Condition \ref{regularitycondition}. In fact for any $T >0$
\begin{align}
\label{condforhn}
 \sup_{t \in [\epsilon_N,T] }  \|h^N_\theta(t) \| = O(N^{-1}) \textnormal{ and } \ \int_{0}^T \|h^N_\theta(t)\|dt = O(N^{-1}), 
\end{align}
where $\epsilon_N = 1 /\sqrt{N}$.

Let $H^N_\theta : \R_+ \to \R^m$ be defined by
\begin{align*}
H^N_\theta(t) = \frac{ \partial h^N_\theta (t) }{ \partial \theta}. 
\end{align*}
Differentiating \eqref{main_eqn_hn} with respect to $\theta$ we get
\begin{align*}
\frac{d H^N_\theta(t) }{dt} = \left( N Q^T_\theta  - D_\theta +  \lambda_\theta I_m\right) H^N_\theta(t) 
+ \left( N \frac{ \partial  Q^T_\theta }{ \partial \theta }  - \frac{ \partial  D_\theta }{\partial \theta } +  \frac{ \partial \lambda_\theta }{ \partial \theta } I_m\right) h^N_\theta(t) 
  -  \frac{\partial ( D_\theta \pi_\theta)  }{ \partial \theta} + \frac{ \partial (\lambda_\theta \pi_\theta ) }{ \partial \theta }.
\end{align*}
Note that 
\begin{align*}
\left\langle v^N_\theta ,  N \frac{ \partial  Q^T_\theta }{ \partial \theta } \right\rangle =  N \left\langle v^N_\theta ,  \frac{ \partial  Q^T_\theta }{ \partial \theta } \right\rangle  =  N \left\langle \bar{1}_m,  \frac{ \partial  Q^T_\theta }{ \partial \theta } \right\rangle + O(1) =N  \frac{ \partial (\bar{1}_m  Q^T_\theta) }{ \partial \theta }  + O(1) = O(1) ,
\end{align*}
where the last equality is true because $Q_\theta \bar{1}_m = \bar{0}_d$.
Let $G^N_\theta (t) = \langle v^N_\theta , H^N_\theta(t) \rangle$. Then $G^N_\theta$ satisfies an ordinary differential equation of the form
\begin{align*}
\frac{d G^N_\theta(t) }{dt} = e^N_\theta G^N_\theta(t) + f^N_\theta h^N_\theta(t) + g^N_\theta,
\end{align*}
where the sequences $\{e^N_\theta\}, \{g^N_\theta\}$ are in $O(N^{-1})$ and the sequence $f^N_\theta$ is in $O(1)$. Gronwall's inequality along with \eqref{condforhn} and \eqref{perturbedeigenvalueeigenvector} imply that
\begin{align}
\label{sumisorder2}
 \sup_{t \in [0,T]} | G^N_\theta(t)  | = O(N^{-1}) \ \textnormal{ and }   \sup_{t \in [0,T]} |  \langle \bar{1}_m ,  H^N_\theta(t) \rangle | = O(N^{-1}) .
\end{align}

Let $\bar{H}^N_\theta(t)$ be the first $(m-1)$ components of $H^N_\theta(t)$. Differentiating \eqref{eqnforhbarn} with respect to $\theta$, we see that $\bar{H}^N_\theta$ satisfies an equation of the form
\begin{align*}
\frac{d \bar{H}^N_\theta(t) }{dt} = \left( N C_\theta  - \bar{D}_\theta +  \lambda_\theta I_{m-1}\right) \bar{H}^N_\theta(t)
+
\left( N \frac{ \partial  C_\theta }{ \partial \theta }  - \frac{ \partial  \bar{D}_\theta }{\partial \theta } +  \frac{ \partial \lambda_\theta }{ \partial \theta } I_{m-1} \right) h^N_\theta(t) 
  -  \frac{\partial (\bar{ D }_\theta \bar{\pi}_\theta)  }{ \partial \theta} + \frac{ \partial (\lambda_\theta \bar{\pi}_\theta ) }{ \partial \theta }.
\end{align*}
If $C^N_\theta$ is the matrix given by \eqref{defncntheta}, then we can solve for $\bar{H}^N_\theta$ as
\begin{align*}
\bar{H}^N_\theta(t) & =  \exp( N C^N_\theta t ) \bar{H}^N_\theta(0) - \int_{0}^t  \exp( N C^N_\theta (t-s) ) \left(  \frac{\partial (\bar{ D }_\theta \bar{\pi}_\theta)  }{ \partial \theta} - \frac{ \partial (\lambda_\theta \bar{\pi}_\theta ) }{ \partial \theta } \right)ds \\
& + \int_{0}^t  \exp( N C^N_\theta (t-s) ) \left( N \frac{ \partial  C_\theta }{ \partial \theta }  - \frac{ \partial  \bar{D}_\theta }{\partial \theta } +  \frac{ \partial \lambda_\theta }{ \partial \theta } I_{m-1} \right) h^N_\theta(s) ds. 
\end{align*}
From \eqref{alphacondition} and \eqref{condforhn} we can deduce that $\bar{H}^N_\theta$ satisfies Condition \ref{regularitycondition}. Using \eqref{sumisorder2} it can be seen that $H^N_\theta$ also satisfies Condition \ref{regularitycondition}. This completes the proof of the proposition.
\end{proof}
\begin{corollary}
\label{corr_regularity}
Let $\hat{\beta}^N_\theta$ be the function defined in Proposition \ref{prop_regularity}. Then for any $T > 0$
\begin{align*}
\lim_{N \to \infty}  \sup_{t \in [0,T]}  \left| \left\langle  \bar{1}_m,  \hat{\beta}^N_\theta(t)\right\rangle \right|  = 0 \ \textnormal{ and } \ 
\lim_{N \to \infty}  \sup_{t \in [0,T]}  \left| \left\langle  \bar{1}_m,  \frac{ \partial  \hat{\beta}^N_\theta(t) }{ \partial \theta} \right\rangle \right|  = 0 
\end{align*}
\end{corollary}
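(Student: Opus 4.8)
The plan is to exploit an algebraic identity relating $\hat{\beta}^N_\theta$ to the function $h^N_\theta$ already analyzed in the proof of Proposition \ref{prop_regularity}, so that both claimed limits reduce immediately to the estimates \eqref{sumisorder1} and \eqref{sumisorder2} established there. The point is that no new analytic work is required: the sharp $O(N^{-1})$ control on the $\bar{1}_m$-projections has already been derived over the \emph{entire} interval $[0,T]$, and the corollary is merely a repackaging of that control.

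First I would note that the definition \eqref{defnhntheta}, $h^N_\theta(t) = e^{\lambda_\theta t}\beta^N_\theta(t) - \pi_\theta$, can be rearranged as $\beta^N_\theta(t) = e^{-\lambda_\theta t}\left( h^N_\theta(t) + \pi_\theta \right)$, whence
\[
\hat{\beta}^N_\theta(t) = \beta^N_\theta(t) - e^{-\lambda_\theta t}\pi_\theta = e^{-\lambda_\theta t}\, h^N_\theta(t).
\]
Taking the inner product with $\bar{1}_m$ gives $\langle \bar{1}_m, \hat{\beta}^N_\theta(t)\rangle = e^{-\lambda_\theta t}\langle \bar{1}_m, h^N_\theta(t)\rangle$. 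Since $\lambda_\theta = \bar{1}_m^T D_\theta \pi_\theta = \sum_{i=1}^m \Lambda(e_i,\theta)\,\pi_{\theta,i} \geq 0$ (the weight $\Lambda$ is non-negative and $\pi_\theta$ is a probability vector by \eqref{condonstdistribution}), we have $e^{-\lambda_\theta t} \leq 1$ for every $t \geq 0$, and therefore
\[
\sup_{t \in [0,T]}\left| \langle \bar{1}_m, \hat{\beta}^N_\theta(t)\rangle \right| \leq \sup_{t \in [0,T]}\left| \langle \bar{1}_m, h^N_\theta(t)\rangle \right| = O(N^{-1}),
\]
by \eqref{sumisorder1}. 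Letting $N \to \infty$ yields the first limit.

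For the second limit I would differentiate the identity $\hat{\beta}^N_\theta(t) = e^{-\lambda_\theta t}h^N_\theta(t)$ with respect to $\theta$, which gives
\[
\frac{\partial \hat{\beta}^N_\theta(t)}{\partial \theta} = -t\,\frac{\partial \lambda_\theta}{\partial \theta}\, e^{-\lambda_\theta t}\, h^N_\theta(t) + e^{-\lambda_\theta t}\, H^N_\theta(t),
\]
where $H^N_\theta = \partial h^N_\theta / \partial \theta$ as in the proof of Proposition \ref{prop_regularity}. Pairing with $\bar{1}_m$ and using $e^{-\lambda_\theta t} \leq 1$ together with $t \leq T$ on $[0,T]$, I obtain
\[
\left| \left\langle \bar{1}_m, \frac{\partial \hat{\beta}^N_\theta(t)}{\partial \theta}\right\rangle \right| \leq T\left| \frac{\partial \lambda_\theta}{\partial \theta}\right|\, \left| \langle \bar{1}_m, h^N_\theta(t)\rangle\right| + \left| \langle \bar{1}_m, H^N_\theta(t)\rangle\right|.
\]
The factor $\partial \lambda_\theta/\partial \theta$ is a finite constant independent of $N$, so by \eqref{sumisorder1} and \eqref{sumisorder2} both terms on the right are $O(N^{-1})$ uniformly on $[0,T]$, and the second limit follows.

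I do not expect any genuine obstacle here, since the corollary is a direct consequence of Proposition \ref{prop_regularity}. The only subtlety worth flagging is that one must invoke the full-interval estimates \eqref{sumisorder1} and \eqref{sumisorder2}, which control the $\bar{1}_m$-projection on all of $[0,T]$, rather than Condition \ref{regularitycondition} itself, whose supremum is taken only over $[\epsilon_N, T]$: the corollary demands uniformity down to $t = 0$, and it is precisely in the conserved-mass direction $\bar{1}_m$ that such behaviour near the origin is available (indeed $\langle \bar{1}_m, h^N_\theta(0)\rangle = O(N^{-1})$ by the initial condition \eqref{initialregularity}).
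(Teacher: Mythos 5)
Your proof is correct and is essentially the paper's own argument: the paper dismisses the corollary as ``immediate from \eqref{sumisorder1} and \eqref{sumisorder2},'' and what you have written is precisely the fleshing-out of that claim, via the identity $\hat{\beta}^N_\theta(t) = e^{-\lambda_\theta t} h^N_\theta(t)$, the bound $e^{-\lambda_\theta t} \leq 1$ (valid since $\lambda_\theta \geq 0$), and the product rule in $\theta$. Your closing remark---that one must use the full-interval estimates \eqref{sumisorder1} and \eqref{sumisorder2} rather than Condition \ref{regularitycondition}, since the latter only controls $[\epsilon_N, T]$---is exactly the right subtlety to flag and matches why the corollary is stated for the $\bar{1}_m$-projection alone.
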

\begin{proof}
The proof is immediate from \eqref{sumisorder1} and \eqref{sumisorder2}.
\end{proof}

We end this section with an important observation.
\begin{remark}
 \label{rem:extensionfinitetocountable} 
 To prove Proposition \ref{prop_regularity} we used results from the theory of perturbation of finite matrices. Consider the situation where the state space $\mathcal{E}$ of the Markov chain is countably infinite. Now the matrix of transition rates $Q_\theta$ is infinite and it can be seen as a linear operator on $\mathcal{E}$. Proposition \ref{mainoccmeasureprop} will still hold in this case and assuming the existence of a suitable \emph{Lyapunov function} (see \cite{Meyn1}) for the Markov chain, one can use results from the perturbation theory of linear operators (see \cite{Farid}) to prove Proposition \ref{prop_regularity} in a similar way.
\end{remark}

\subsection{Construction of a new process} \label{sec:constrnewprocess}
In this section we construct a new process $W^N_\theta$ and study some of its properties. As mentioned before, this process captures the one-dimensional distribution of $X^{N}_{ \gamma_2,\theta } $ (see Section \ref{sec:intro}) and its dynamics does not involve any ``fast" transitions. We begin by making a remark which will simplify the proof of Theorem \ref{mainsensitivityresult}.
\begin{remark}
\label{main_remark_simpl2}
Recall the description of the limiting process $\hat{X}_\theta$ from the statement of Theorem \ref{mainsensitivityresult}. Note that this process corresponds to a reduced model which does not contain any reactions in the set $(\Gamma_1 \cup \Gamma_2)^c = \{k=1,\dots,K : k  \notin \Gamma_1 \cup \Gamma_2\}$. This suggests that we can prove Theorem \ref{mainsensitivityresult} with the assumption that $(\Gamma_1 \cup \Gamma_2)^c$ is empty. If this is not the case, then our proof can be adjusted easily to account for the reactions in $(\Gamma_1 \cup \Gamma_2)^c$. We will also set $\gamma_2 = \gamma_1 + 1$, which can be ensured by redefining $N$, if necessary. 
\end{remark}

From now on we will always assume that $\gamma_2 = \gamma_1+1$ and $(\Gamma_1 \cup \Gamma_2)^c = \emptyset$. Under these assumptions the random time change representation of  $\{X^{N}_{\gamma_2,\theta}(t) : t \geq 0\}$ is given by 
\begin{align*}
X^{N}_{\gamma , \theta} (t) & = x_0 + \sum_{k \in \Gamma_1}Y_k \left( N  \int_{0}^t \lambda_k \left( X^{N}_{\gamma_2,\theta} (s)     ,\theta \right)ds  \right) \zeta_k
+ \sum_{k \in \Gamma_2}Y_k \left(  \int_{0}^t \lambda_k \left( X^{N}_{\gamma_2,\theta} (s)     ,\theta \right)ds  \right) \zeta_k.
\end{align*}
For each $k \in \Gamma_1 \cup \Gamma_2$ we let $\zeta^s_k = \Pi_2  \zeta_k$ and $\zeta^f_k = (I- \Pi_2) \zeta_k$. From \eqref{fastreactionsdonotaffect} we know that $\zeta^s_k = \bar{0}_d$ for each $k \in \Gamma_1$. If we define two processes $X^N_{S,\theta}$ and $X^N_{F,\theta}$ by
\begin{align}
\label{defn_slowandfastprocesses}
X^N_{S,\theta} (t) = \Pi_2  X^{N}_{\gamma_2 ,\theta}(t)  \ \textnormal{ and } \ X^{N}_{F,\theta} (t) = (I -\Pi_2 ) X^{N}_{\gamma_2,\theta}(t),
\end{align}
then their random time change representations are given by
\begin{align}
\label{rtc_slowprocess}
X^{N}_{S,\theta} (t) & = \Pi_2 x_0  + \sum_{k \in \Gamma_2} Y_k \left(  \int_{0}^t \lambda_k \left(X^{N}_{S,\theta} (s) +X^{N}_{F,\theta} (s)    ,\theta \right)ds  \right) \zeta^s_k \\
\label{rtc_fastprocess}
X^{N}_{F,\theta} (t) & = ( I - \Pi_2) x_0 +\sum_{k \in \Gamma_2} Y_k \left(  \int_{0}^t \lambda_k \left(X^{N}_{S,\theta} (s) +X^{N}_{F,\theta} (s)    ,\theta \right)ds  \right) \zeta^s_k \notag 
\\& + \sum_{k \in \Gamma_1} Y_k \left(  N \int_{0}^t \lambda_k \left(X^{N}_{S,\theta} (s) +X^{N}_{F,\theta} (s)    ,\theta \right)ds  \right) \zeta^f_k.
\end{align}
\begin{remark}
\label{fastmarkovprocess}
These representations show that between the successive jump times of $X^N_{S,\theta}$, if the state of this process is $v$, then the process $X^N_{F,\theta}$ evolves like a Markov process with state space $\mathbb{H}_v$ and generator $N\mathbb{C}^{v}_\theta$, where $\mathbb{C}^{v}_\theta$ is given by \eqref{defn_cvtheta}.
\end{remark}

The above remark motivates the construction of the process $W^N_\theta$. Before we describe this construction we need to define certain quantities. Let $\lambda_0(x,\theta) = \sum_{k \in \Gamma_2} \lambda_k(x,\theta)$ and for any $k \in \Gamma_2$, $v \in \Pi_2 \mathcal{S}$, $z \in \mathbb{H}_v$ and $t \geq 0$ define 
\begin{align}
\label{defnrhonk}
\rho^N_{k,\theta}(t,v,z) = \frac{  \E \left( \lambda_k(v+Z^N_\theta(t) ,\theta  ) \exp\left( - \int_{0}^{t}  \lambda_0( v +Z^N_\theta(s) ,\theta ) ds  \right)  \right)   }{  \E \left( \exp\left( - \int_{0}^{t}  \lambda_0( v + Z^N_\theta(s) ,\theta ) ds  \right)   \right) },
\end{align}
where $\{ Z^N_\theta(t) :  t \geq 0 \}$ is an independent Markov process with initial state $z$ and generator $N \mathcal{C}^{v}_\theta$.
For any $e \in \mathbb{H}_v$ define
\begin{align}
\label{defnbetanl}
\beta^N_{\theta}(t,v,z,e) & = \E \left(  \ind_{ \{ Z^N_\theta(t) =e \} } \exp\left( - \int_{0}^{t}  \lambda_0( v + Z^N_\theta(s) ,\theta ) ds  \right) \right) \\
\textnormal{ and } \quad  \Theta^N_{k,\theta}(t,v,z,e) & = \frac{ \lambda_k(v+e,\theta  )  \beta^N_{\theta}(t,v,z,e)}{ \rho^N_{k,\theta}(t,v,z)  \exp \left(  -\int_{0}^{t} \rho^N_{0,\theta}(t,v,z)  ds\right) }, \label{defnthetanl}
\end{align}
where
\begin{align}
\label{defrho0n}
\rho^N_{0,\theta}(t,v,z) = \sum_{k \in \Gamma_2} \rho^N_{k,\theta}(t,v,z).
\end{align}
If $\rho^N_{k,\theta}(t,v,z) =0$ then instead of defining $\Theta^N_{k,\theta}(t,v,z,e) $ by \eqref{defnthetanl} we do the following. We set $ \Theta^N_{k,\theta}(t,v,z,z) =1$ and set $ \Theta^N_{k,\theta}(t,v,z,e) =0$ for all $e  \in \mathbb{H}_v - \{z\}$.

Recall that the set $\mathbb{H}_v$ is finite due to part (A) of Assumption \ref{assforsensitivityresult}. Proposition \ref{mainoccmeasureprop} shows that the mapping $t \mapsto \beta^N_{\theta}(t,v,z,e)$ is continuously differentiable, and hence the mappings $t \mapsto \rho^N_{k,\theta}(t,v,z)$ and $t \mapsto  \Theta^N_{k,\theta}(t,v,z,e)$ are also continuously differentiable.
\begin{lemma}
\label{lemma_usefulproperties}
Fix a $v \in \Pi_2 \mathcal{S}$, $z \in \mathbb{H}_v$ and $t \geq 0$. 
\begin{itemize}
\item[(A)] Let $\{ Z^N_\theta(t) :  t \geq 0 \}$ be an independent Markov process with initial state $z$ and generator $N \mathcal{C}^{v}_\theta$. Then
\begin{align*}
\exp \left(  -\int_{0}^{t} \rho^N_{0,\theta}(s,v,z)ds\right) = \E \left( \exp\left( - \int_{0}^{t}  \lambda_0( v + Z^N_\theta(s) ,\theta ) ds  \right) \right).
\end{align*}
\item[(B)] For any $k \in \Gamma_2$
\begin{align*}
 \sum_{e \in \mathbb{H}_v }\Theta^N_{k,\theta}(t,v,z,e)  =1.
\end{align*}
\end{itemize}
\end{lemma}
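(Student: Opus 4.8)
The plan is to prove part (A) first by recognizing the common denominator in the definition \eqref{defnrhonk} of $\rho^N_{k,\theta}$ as the solution of a scalar linear ODE, and then to deduce part (B) from part (A) together with a straightforward summation identity. Throughout, the finiteness of $\mathbb{H}_v$ (part (A) of Assumption \ref{assforsensitivityresult}) guarantees that $\lambda_0 = \sum_{k\in\Gamma_2}\lambda_k$ is bounded on $v+\mathbb{H}_v$, which makes all the interchanges of expectation and differentiation routine.

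For part (A), I would abbreviate the denominator in \eqref{defnrhonk} by $M(t) = \E(\exp(-\int_0^t\lambda_0(v+Z^N_\theta(s),\theta)\,ds))$, so that $M(0)=1$. Differentiating under the expectation (justified by boundedness of $\lambda_0$ on the finite set $\mathbb{H}_v$ and dominated convergence) gives
$$M'(t) = -\E(\lambda_0(v+Z^N_\theta(t),\theta)\exp(-\int_0^t\lambda_0(v+Z^N_\theta(s),\theta)\,ds)) = -\rho^N_{0,\theta}(t,v,z)\,M(t),$$
where the second equality comes from summing \eqref{defnrhonk} over $k\in\Gamma_2$, recalling that $\sum_{k\in\Gamma_2}\lambda_k=\lambda_0$, and invoking \eqref{defrho0n}. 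This scalar linear ODE with initial condition $M(0)=1$ has the unique solution $M(t)=\exp(-\int_0^t\rho^N_{0,\theta}(s,v,z)\,ds)$, which is precisely the assertion of part (A).

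For part (B), I would fix $k\in\Gamma_2$ (with $\rho^N_{k,\theta}(t,v,z)\neq 0$) and sum \eqref{defnthetanl} over $e\in\mathbb{H}_v$. Because $Z^N_\theta(t)$ takes values in $\mathbb{H}_v$, the numerator collapses into a single expectation:
$$\sum_{e\in\mathbb{H}_v}\lambda_k(v+e,\theta)\beta^N_\theta(t,v,z,e) = \E(\lambda_k(v+Z^N_\theta(t),\theta)\exp(-\int_0^t\lambda_0(v+Z^N_\theta(s),\theta)\,ds)),$$
using the definition \eqref{defnbetanl}, linearity of expectation, and the pointwise identity $\sum_{e}\lambda_k(v+e,\theta)\ind_{\{Z^N_\theta(t)=e\}}=\lambda_k(v+Z^N_\theta(t),\theta)$. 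Applying part (A) to identify the exponential factor in the denominator of \eqref{defnthetanl} with $M(t)$, and observing from \eqref{defnrhonk} that the right-hand side above equals $\rho^N_{k,\theta}(t,v,z)\,M(t)$, the numerator and denominator cancel and the sum equals $1$. The degenerate case $\rho^N_{k,\theta}(t,v,z)=0$ is covered by the explicit convention $\Theta^N_{k,\theta}(t,v,z,z)=1$ and $\Theta^N_{k,\theta}(t,v,z,e)=0$ for $e\neq z$, for which the identity holds trivially.

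I do not anticipate any serious obstacle: both parts reduce to elementary manipulations once $M(t)$ is isolated. The only point requiring mild care is the differentiation under the expectation sign in part (A), but this is immediate from the boundedness of $\lambda_0$ on the finite state space $\mathbb{H}_v$.
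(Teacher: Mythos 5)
Your proposal is correct and follows essentially the same route as the paper: part (A) is the observation that $\rho^N_{0,\theta}(t,v,z) = -\tfrac{d}{dt}\log M(t)$ (the paper integrates and exponentiates the log-derivative, you solve the equivalent linear ODE $M' = -\rho^N_{0,\theta}M$ with $M(0)=1$), and part (B) is the same summation-and-cancellation using \eqref{defnbetanl}, \eqref{defnrhonk} and part (A). The only differences are cosmetic refinements on your side — explicitly justifying differentiation under the expectation and spelling out the degenerate case $\rho^N_{k,\theta}=0$ — which the paper leaves implicit.
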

\begin{proof}
Observe that
\begin{align*}
\rho^N_{0,\theta}(s,v,z)= \sum_{k \in \Gamma_2} \rho^N_{k,\theta}(s,v,z) &  =  \frac{  \E \left( \lambda_0(v+Z^N_\theta(s) ,\theta  ) \exp\left( - \int_{0}^{t}  \lambda_0( v +Z^N_\theta(s) ,\theta ) ds  \right)  \right)   }{  \E \left( \exp\left( - \int_{0}^{t}  \lambda_0( v + Z^N_\theta(s) ,\theta ) ds  \right)   \right) } \\
& = - \frac{d}{dt} \log \left(  \E \left( \exp\left( - \int_{0}^{t}  \lambda_0( x + Z^N_\theta(s) ,\theta ) ds  \right)   \right) \right).
\end{align*}
Integrating both sides with respect to $t$ and then exponentiating proves part (A).
From \eqref{defnrhonk} we get
\begin{align}
\label{defnjumpdistr1}
\rho^N_{k,\theta}(s,v,z) \exp \left(  -\int_{0}^{t} \rho^N_{0,\theta}(s,v,z) ds\right) & = \E \left( \lambda_k(v+Z^N_\theta(t) ,\theta  ) \exp\left( - \int_{0}^{t}  \lambda_0( v +Z^N_\theta(s) ,\theta ) ds  \right)  \right)  \notag \\
& = \sum_{e \in \mathbb{H}_v }\lambda_k(v+e ,\theta  )   \E \left(  \ind_{ \{ Z^N_\theta(t) =e \} }  \exp\left( - \int_{0}^{t}  \lambda_0( v +Z^N_\theta(s) ,\theta ) ds  \right)  \right) \notag \\
& = \sum_{e \in \mathbb{H}_v } \lambda_k(v+e ,\theta  )  \beta^N_{\theta}(t,v,z,e).
\end{align}
Hence
\begin{align*}
 \sum_{e \in \mathbb{H}_v }\Theta^N_{k,\theta}(t,v,z,e) =  \sum_{e \in \mathbb{H}_v }  \frac{ \lambda_k(v+e ,\theta  )  \beta^N_{\theta}(t,v,z,e) }{ \rho^N_{k,\theta}(s,v,z) \exp \left(  -\int_{0}^{t} \rho^N_{0,\theta}(s,v,z) ds\right)} = 1,
\end{align*}
and this proves part (B).
\end{proof}

Part (B) of Lemma \ref{lemma_usefulproperties} shows that for any $k \in \Gamma_2$, $v \in \Pi_2 \mathcal{S}$, $z \in \mathbb{H}_v$ and $t \geq 0$, we can regard $\Theta^N_{k,\theta}(t,v,z,\cdot)$ as a probability measure on $\mathbb{H}_v$. We know that $\mathbb{H}_v$ is a finite set. 
From now on, whenever we write $\mathbb{H}_v =\{ e_1,\dots,e_m\}$, we will assume that the elements are arranged in the lexicographical order on $\R^d$. For any $u \in (0,1)$ define
\begin{align}
\label{defndigamma}
\digamma^N_{ k,\theta} (t,v,z,u) = e_i \textnormal{ where } i = \min \left\{ l =1,\dots,m : u \leq \sum_{n=1}^l \Theta^N_{k,\theta}(t,v,z,e_n) \right\}.
\end{align}
Then a $\mathbb{H}_v$-valued random variable with distribution $\Theta^N_{k,\theta}(t,v,z,\cdot)$ can be generated by transforming a $\textnormal{Unif}(0,1)$ random variable $u$ with the function $\digamma^N_{ k,\theta} (t,v,z,\cdot)$. The next lemma will be useful in proving the main result.
\begin{lemma}
\label{lemma_decoupling}
Fix a $v \in \Pi_2 \mathcal{S}$, $z \in \mathbb{H}_v$ and $t > 0$. Let $\mathbb{H}_v =\{ e_1,\dots,e_m\}$ and $u$ be a $\textnormal{Unif}(0,1)$ random variable. Pick $i,j \in \{1,\dots,m\}$ such that $i \neq j$.
Then 
\begin{align*}
\lim_{h \to 0} \frac{ \P\left( \digamma^N_{ k,\theta} (t,v,z,u) =e_i \textnormal{ and }  \digamma^N_{ k,\theta+h} (t,v,z,u) = e_j   \right) }{h} \leq \sum_{e \in \mathbb{H}_v  } \left|  \frac{ \partial  \Theta^N_{k,\theta}(t,v,z,e) }{ \partial \theta } \right| 
\end{align*}
\end{lemma}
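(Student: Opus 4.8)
The plan is to recognize the map $\digamma^N_{k,\theta}(t,v,z,\cdot)$ as the quantile (inverse-CDF) transform and reduce the probability to the length of an interval overlap in $(0,1)$. For $l=0,1,\dots,m$ write $\Psi_l(\theta)=\sum_{n=1}^l \Theta^N_{k,\theta}(t,v,z,e_n)$, with $\Psi_0(\theta)=0$ and, by part (B) of Lemma \ref{lemma_usefulproperties}, $\Psi_m(\theta)=1$. Since each $\Theta^N_{k,\theta}(t,v,z,e_n)\geq 0$, the breakpoints $\Psi_0(\theta)\leq\Psi_1(\theta)\leq\dots\leq\Psi_m(\theta)$ partition $(0,1]$ into half-open blocks, and by \eqref{defndigamma} we have $\digamma^N_{k,\theta}(t,v,z,u)=e_i$ exactly when $u\in(\Psi_{i-1}(\theta),\Psi_i(\theta)]$. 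As $u$ is uniform on $(0,1)$, the probability in the statement equals a Lebesgue measure:
\begin{align*}
\P\big(\digamma^N_{k,\theta}(t,v,z,u)=e_i,\ \digamma^N_{k,\theta+h}(t,v,z,u)=e_j\big) = \big| (\Psi_{i-1}(\theta),\Psi_i(\theta)] \cap (\Psi_{j-1}(\theta+h),\Psi_j(\theta+h)] \big|.
\end{align*}

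First I would bound this overlap by a single increment of one cumulative function $\Psi_l$. Using $|(a,b]\cap(a',b']|\leq(\,\min(b,b')-\max(a,a')\,)^+\leq(b-a')^+$, the case $i<j$ gives overlap $\leq(\Psi_i(\theta)-\Psi_{j-1}(\theta+h))^+$; since $i\leq j-1$ forces $\Psi_i(\theta)\leq\Psi_{j-1}(\theta)$, monotonicity of $x\mapsto x^+$ yields overlap $\leq(\Psi_{j-1}(\theta)-\Psi_{j-1}(\theta+h))^+$. Symmetrically, for $i>j$ the overlap is $\leq(\Psi_j(\theta+h)-\Psi_{i-1}(\theta))^+\leq(\Psi_j(\theta+h)-\Psi_j(\theta))^+$. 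The key point is that $i\neq j$ forces at least one full block of the $\theta$-partition to separate the two intervals, so any overlap produced by the perturbation $\theta\mapsto\theta+h$ is governed solely by how far a single boundary $\Psi_{j-1}$ or $\Psi_j$ has moved.

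Next I would divide by $h$ and let $h\to 0^+$ (for $h<0$ the left-hand side is nonpositive, so the inequality is trivial). The maps $\theta\mapsto\Theta^N_{k,\theta}(t,v,z,e_n)$ are differentiable, since the propensities are twice continuously differentiable in $\theta$ by part (C) of Assumption \ref{assforsensitivityresult} and the $\theta$-differentiability of the weighted-occupation quantities $\beta^N_\theta$ follows as in the remark after Proposition \ref{prop_regularity}; hence each $\Psi_l$ is differentiable in $\theta$. Therefore
\begin{align*}
\lim_{h\to 0^+}\frac{(\Psi_{j-1}(\theta)-\Psi_{j-1}(\theta+h))^+}{h} = \Big(-\tfrac{\partial \Psi_{j-1}}{\partial\theta}\Big)^+ \leq \Big|\tfrac{\partial \Psi_{j-1}}{\partial\theta}\Big|,
\end{align*}
and analogously for the $i>j$ case with $\Psi_j$. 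Finally, since $\partial\Psi_l/\partial\theta=\sum_{n=1}^l\partial\Theta^N_{k,\theta}(t,v,z,e_n)/\partial\theta$, the triangle inequality gives $|\partial\Psi_l/\partial\theta|\leq\sum_{n=1}^m|\partial\Theta^N_{k,\theta}(t,v,z,e_n)/\partial\theta|=\sum_{e\in\mathbb{H}_v}|\partial\Theta^N_{k,\theta}(t,v,z,e)/\partial\theta|$, which is exactly the asserted bound.

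I expect the main obstacle to be the careful bookkeeping of the interval overlaps in the two cases $i<j$ and $i>j$, and in particular keeping the estimates robust against degenerate (zero-length) blocks $\Psi_{l-1}(\theta)=\Psi_l(\theta)$ and against the boundary situation $\rho^N_{k,\theta}(t,v,z)=0$, where $\Theta^N_{k,\theta}(t,v,z,\cdot)$ is defined by hand as a point mass at $z$. These cases are absorbed by relying only on the nonstrict inequalities above, rather than on any assumption that the blocks are nondegenerate.
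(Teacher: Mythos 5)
Your proof is correct and takes essentially the same route as the paper's: both recognize $\digamma^N_{k,\theta}(t,v,z,\cdot)$ as the quantile transform of the partial sums $A_l(\theta)=\sum_{n=1}^{l}\Theta^N_{k,\theta}(t,v,z,e_n)$, write the probability as the overlap of two subintervals of $(0,1)$, and bound the $h\to 0$ limit by derivatives of these partial sums. The only difference is bookkeeping: the paper assumes $\Theta^N_{k,\theta}(t,v,z,e)>0$ for every $e$, argues the overlap vanishes unless $j=i\pm 1$, and computes the exact one-sided limits $\left[\partial A_i(\theta)/\partial\theta\right]^{-}$ and $\left[\partial A_{i-1}(\theta)/\partial\theta\right]^{+}$, whereas your uniform overlap estimate keyed to a single moving boundary treats all $i\neq j$ and degenerate (zero-length) blocks at once.
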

\begin{proof} 
For proving this lemma we can assume that $\Theta^N_{k,\theta}(t,v,z,e) > 0$ for each $e \in \mathbb{H}_v$. Let $\mathbb{H}_v =\{ e_1,\dots,e_m\}$ and for any $l=1,\dots,m$ define
\begin{align*}
A_l(\theta) = \sum_{n=1}^l \Theta^N_{k,\theta}(t,v,z,e_n). 
\end{align*}
Note that $A_m(\theta) =1 $ for any $\theta$ due to part(B) of Lemma \ref{lemma_usefulproperties}. For convenience let $A_0(\theta)  = 0$ for any $\theta$.
For small values of $h$ we can write
\begin{align*}
 \P\left(  \digamma^N_{ k,\theta} (t,v,z,u) =e_i \textnormal{ and }  \digamma^N_{ k,\theta+h} (t,v,z,u) = e_j      \right) = \P\left(  u \in  ( A_{i-1}(\theta) , A_{i}(\theta) ) \textnormal{ and } u \in ( A_{j-1}(\theta+h) , A_{j}(\theta+h) )     \right).
\end{align*}
Since $\Theta^N_{k,\theta}(t,v,z,e_l) > 0$ for each $l=1,\dots,m$, this probability is $0$ if $j > i+1$ or $j < i-1$. Assume that $j = i+1$ for $i < m$. Then for smal values of $h$ we can write
\begin{align*}
 \P\left(  \digamma^N_{ k,\theta} (t,v,z,u) =e_i \textnormal{ and }  \digamma^N_{ k,\theta+h} (t,v,z,u) = e_j      \right) & = \P\left(  u \in  ( A_{i}(\theta +h ) , A_{i}(\theta) ) \right) \\
 & = \left[  \frac{ \partial  A_{i}(\theta) }{ \partial \theta } \right]^{-} h +o(h).
\end{align*}
Therefore
\begin{align*}
\lim_{h \to 0} \frac{ \P\left( \digamma^N_{ k,\theta} (t,v,z,u) =e_i \textnormal{ and }  \digamma^N_{ k,\theta+h} (t,v,z,u) = e_j   \right) }{h} =  \left[  \frac{ \partial  A_{i}(\theta) }{ \partial \theta } \right]^{-} .
\end{align*} 
Similarly for $j=i-1$ and $i>1$ we can show that
\begin{align*}
\lim_{h \to 0} \frac{ \P\left( \digamma^N_{ k,\theta} (t,v,z,u) =e_i \textnormal{ and }  \digamma^N_{ k,\theta+h} (t,v,z,u) = e_j   \right) }{h} =  \left[  \frac{ \partial  A_{i-1}(\theta) }{ \partial \theta } \right]^{+} .
\end{align*} 
Combining the last two relations proves the lemma. 
\end{proof}   

The new process $W^N_\theta$ will be a Markov process on state space $\hat{S}$ given by
\begin{align}
\label{defn:hats}
\hat{ \mathcal{S} }= \{ ( t,v,z ) \in \R_+ \times \R^d \times \R^d : v \in \Pi_2 \mathcal{S} \textnormal{ and } z \in \mathbb{H}_v \}.
\end{align}
Let $\Pi_{ \hat{ \mathcal{S} }}$ be the projection map from $\hat{ \mathcal{S} }$ to $\Pi_2 \mathcal{S} $ defined by
\begin{align}
\label{defn:projhats}
\Pi_{\hat{ \mathcal{S} }}(t,v,z) = v.
\end{align}
We now define a class $\mathcal{C}$ of bounded real-valued functions over $\hat{ \mathcal{S} }$ by
\begin{align}
\label{classc}
\mathcal{C} = & \left\{ f    \in  \mathcal{B}(\hat{ \mathcal{S} }) :  f(\cdot,v,z) \textnormal{ is continuously differentiable for each } v \in \Pi_2 \mathcal{S} \textnormal{ and } z \in \mathbb{H}_v\right\}.
\end{align}  
Let $\{W^N_\theta (t) : t \geq 0\}$ be the $\hat{ \mathcal{S}}$-valued Markov process with initial state $(0,v_0,z_0) = (0, \Pi_2 x_0 , (I- \Pi_2) x_0)$ and generator given by
\begin{align}
\label{genbntheta}
\mathbb{B}^N_\theta f(t,v,z) = \frac{ \partial f(t,v,z)}{ \partial t }+ \sum_{k \in \Gamma_2} \rho^N_{k,\theta}(t,v,z)  \sum_{e \in \mathbb{H}_v} \left( f(0,v+ \zeta^s_k, e+ \zeta^f_k ) -f(t,v,z) \right)  \Theta^N_{k,\theta}(t,v,z,e),
\end{align}
for all $f \in \mathcal{D}( \mathbb{B}^N_\theta  ) =\mathcal{C}$. The existence and uniqueness of the process $W^N_\theta$ is a direct consequence of the well-posedness of the martingale problem for $\mathbb{B}^N_\theta$, which is verified in Lemma \ref{mainlemmapp}. 

In the rest of this section we study some properties of the process $W^N_\theta$. Observe that the definition of $\hat{ \mathcal{S} }$ (see \eqref{defn:hats}) allows us to write
\begin{align}
\label{decompwntheta}
W^N_\theta(t) = \left(  \tau^N_\theta(t) ,  V^N_\theta(t) ,  Z^N_\theta(t)\right) \textnormal{ for all } t \geq 0,
\end{align}
where $\tau^N_\theta$, $V^N_\theta$ and $Z^N_\theta$ are processes with state spaces $\R_+, \Pi_2 \mathcal{S}$ and $\cup_{v \in  \Pi_2 \mathcal{S}}  \mathbb{H}_v$ respectively. Let $\sigma^N_i$ denote the $i$-th jump time of the process $W^N_\theta$ for $i=1,\dots$. We define $\sigma^N_0 = 0$ for convenience. From the form of the generator $\mathbb{B}^N_\theta$ it is immediate that between the jump times, $\tau^N_\theta$ increases linearly at rate $1$ while $V^N_\theta$ and $Z^N_\theta$ remain constant. Hence 
\begin{align}
\label{whatistau}
\left( \tau^N_\theta(t) ,V^N_\theta(t) , Z^N_\theta(t) \right) = \left( t - \sigma^N_{i-1}  ,  V^N_\theta(\sigma^N_{i-1})  , Z^N_\theta(\sigma^N_{i-1})  \right) \textnormal{ for any } i \in \N \textnormal{ and } t \in [\sigma^N_{i-1} , \sigma^N_i). 
\end{align}
Let $\eta_i$ be the $\Gamma_2$-valued random variable that denotes the direction of the jump at time $\sigma^N_i$ and let $\xi_i$ be the random variable given by $Z^N_\theta( \sigma^N_i - )$. The form of $\mathbb{B}^N_\theta$ allows us to compute the distributions of the random variables $(\sigma^N_i  - \sigma^N_{i-1})$, $\eta_i$ and $\xi_i$ from the values of $ V^N_\theta(\sigma^N_{i-1} )$ and $Z^N_\theta(\sigma^N_{i-1})$. Let $E_{i}(v,z)$ denote the event
\begin{align*}
E_{i}(v,z) = \{  V^N_\theta(\sigma^N_i ) = v,  Z^N_\theta(\sigma^N_i ) = z  \}. 
\end{align*}
Then given $E_{i-1}(v,z)$, $(\sigma^N_i  - \sigma^N_{i-1})$ is a $\R_+$-valued random variable with density
\begin{align}
\label{densityoftimeincerement}
\rho^N_{0,\theta}(t,v,z) \exp \left( -\int_{0}^{t} \rho^N_{0,\theta}(s,v,z) ds\right)dt.
\end{align}
Given $E_{i-1}(v,z)$ and $(\sigma^N_i  - \sigma^N_{i-1}) = t$, $\eta_i$ is a $\Gamma_2$-valued random variable with distribution
\begin{align}
\label{distr:etai}
\P\left(  \eta_i = k \vert E_{i-1}(v,z), (\sigma^N_i  - \sigma^N_{i-1}) = t  \right) = \frac{  \rho^N_{k,\theta}(t,v,z)  }{  \rho^N_{0,\theta}(t,v,z)  }.
\end{align}
Moreover conditioned on $E_{i-1}(v,z)$, $(\sigma^N_i  - \sigma^N_{i-1}) = t$ and $\eta_i= k$, the $\mathbb{H}_v$-valued random variable $\xi_i$ has distribution $\Theta^N_{k,\theta}(t,v,z,\cdot)$. Using \eqref{densityoftimeincerement} and \eqref{distr:etai} we can deduce that
\begin{align}
\label{distrequ1}
& \lim_{h \to 0}  \frac{ \P \left(  \sigma^N_i \in ( \sigma^{N}_{i-1} +t ,\sigma^{N}_{i-1}+ t+h) , V^N_{\theta}( \sigma^N_i) =  v +\zeta^s_k ,  Z^N_{\theta}( \sigma^N_i ) = e+\zeta^f_k  \middle\vert E_{i-1}(v,z)   \right)  }{h}   \\
& =  \rho^N_{k,\theta}(t, v,z) \exp \left( - \int_{0}^{t} \rho^N_{0,\theta}( u,v,z) du \right) \Theta^N_{k,\theta}( t,v,z,e), \notag
\end{align}
for any $i=1,2,\dots$.
\begin{remark}
\label{rem:construction}
The preceding discussion suggests a simple scheme to construct the process \newline $\{ W^N_\theta (t) = (\tau^N_\theta(t) ,  V^N_\theta(t) ,  Z^N_\theta(t)) : t \geq 0 \}$ with generator $\mathbb{B}^N_\theta$ and initial state $(0,v_0,z_0)$. Consider the random time change representation
\begin{align}
\label{rtcr:vntheta}
V^N_\theta(t) = v_0 + \sum_{k \in \Gamma_2} Y_k\left( \int_{0}^t \rho^N_{k,\theta}( \tau^N_\theta(s),V^N_\theta (s), Z^N_\theta(s)  ) ds  \right) \zeta^s_k,
\end{align}
where $\{ Y_k : k \in \Gamma_2 \}$ is a family of independent unit rate Poisson processes. The processes $\tau^N_\theta,V^N_\theta$ and $Z^N_\theta$ can be constructed as follows. For each $i \in \N_0$ let $\sigma^N_i$ be the $i$-th jump time of process $V^N_\theta$, where $\sigma^N_0 = 0$. Defining $(\tau^N_\theta(0) ,  V^N_\theta(0) ,  Z^N_\theta(0)) = (0,v_0,z_0)$ constructs the process $W^N_\theta$ until time $\sigma^N_0$. Assume that this process is constructed until time $\sigma^N_{i-1}$ for some $i =1,2,\dots$. Then the next jump time $\sigma^N_i$ can be evaluated from \eqref{rtcr:vntheta} and the process $W^N_\theta$ can be defined in the time interval $[\sigma^N_{i-1} ,\sigma^N_i )$ using \eqref{whatistau}. If $V^N_\theta(\sigma^N_i ) = v,  Z^N_\theta(\sigma^N_i ) = z $ and $\sigma^N_i -\sigma^N_{i-1} = t$ then we choose random variables $\eta_i$ and $\xi_i$ according to distributions \eqref{distr:etai} and $\Theta^N_{\eta_i,\theta}(t,v,z,\cdot)$ respectively and define
\begin{align*}
( \tau^N_\theta( \sigma^N_i )  ,V^N_\theta(\sigma^N_i ) ,  Z^N_\theta(\sigma^N_i )  = (0, v+ \zeta^s_{\eta_i} , \xi_i + \zeta^f_{\eta_i}).
\end{align*}
This completes the construction of the process until the next jump time $\sigma^N_i$. Proceeding this way we can define $W^N_\theta (t) = (\tau^N_\theta(t) ,  V^N_\theta(t) ,  Z^N_\theta(t))$ for all $t\geq 0$. The relation \eqref{distrequ1} ensures that the process $W^N_\theta $ has generator $\mathbb{B}^N_\theta$.
\end{remark}

In the next proposition we show that the one-dimensional distribution of the process $X^{N}_{\gamma_2,\theta}$ can be captured with the process $W^N_\theta$.
\begin{proposition}
\label{prop_preserve}
 For $i \in \N$, let $\delta^N_i$ and $\sigma^N_i$ denote the $i$-th jump time of the processes $\Pi_2 X^{N}_{\gamma_2,\theta}$ and $W^N_\theta$ respectively. We define $\delta^N_0 = \sigma^N_0 = 0$ for convenience. Then we have the following.
\begin{itemize}
\item[(A)] Let the processes $V^N_\theta$ and $Z^N_\theta$ be related to the process $W^N_\theta$ by \eqref{decompwntheta}. For each $i = 0,1,2,\dots$,
\begin{align}
\label{mainpropparta}
\left( \delta^N_i , \Pi_2 X^N_{\gamma_2,\theta}(\delta^N_i), (I-\Pi_2) X^N_{\gamma_2,\theta}(\delta^N_i) \right) \stackrel{d}{=} \left(  \sigma^N_{i}, V^N_\theta( \sigma^N_{i} ) , Z^N_\theta( \sigma^N_{i}) \right) , 
\end{align}
where $\stackrel{d}{=} $ denotes equality in distribution. 

\item[(B)] Let $f : \mathcal{S} \to \R$ be a polynomially growing function with respect to projection $\Pi_2$ (see Definition \ref{polynomialgrowth}). Then for any $t \geq 0$
\begin{align*}
\E\left( f( X^{N}_{\gamma_2, \theta} (t) ) \right) = \E\left( f^N_\theta( W^N_\theta(t) )  \right),
\end{align*}
where $f_\theta : \hat{ \mathcal{S} } \to \R$ is the function given by
\begin{align}
\label{defn_ftheta}
f^N_\theta(t,v,z) = \frac{\sum_{e \in \mathbb{H}_v } f(v+e) \beta^N_{\theta}(t,v,z,e)   }{ \exp \left( -\int_{0}^t  \rho^N_{0,\theta} (s,v,z)ds \right) }. 
\end{align}
\end{itemize}
\end{proposition}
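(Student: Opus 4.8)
The plan is to analyze the process on each interval between consecutive jumps of the slow component, where the fast component is an autonomous Markov chain, and then to lift the resulting one-step identities to the level of one-dimensional distributions. The starting point is the pair of random time change representations \eqref{rtc_slowprocess}--\eqref{rtc_fastprocess}. The key observation is that up to the first firing of a reaction in $\Gamma_2$ --- equivalently, up to the first jump of $\Pi_2 X^N_{\gamma_2,\theta}$ --- the slow component stays frozen at its current value $v$ while the fast component is driven only by the Poisson clocks $\{Y_k : k \in \Gamma_1\}$. By Remark \ref{fastmarkovprocess} the fast component therefore agrees on this interval with an independent Markov chain $Z^N_\theta$ started at $z$ with generator $N\mathbb{C}^v_\theta$. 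The reactions in $\Gamma_2$ then act as a competing-risk (killing) mechanism: conditioned on the fast path, the first slow jump occurs at rate $\lambda_0(v+Z^N_\theta(s),\theta)$.

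Exploiting this competing-Poisson structure, I would compute the joint ``density'' of the event that the time to the next slow jump equals $t$, the firing reaction is $k$, and the fast state immediately before the jump is $e$, obtaining
\[
\lambda_k(v+e,\theta)\,\E\Big(\ind_{\{Z^N_\theta(t)=e\}}\exp\big(-\textstyle\int_0^t\lambda_0(v+Z^N_\theta(s),\theta)\,ds\big)\Big)=\lambda_k(v+e,\theta)\,\beta^N_\theta(t,v,z,e).
\]
By the definitions \eqref{defnrhonk}, \eqref{defnbetanl}, \eqref{defnthetanl} of $\rho^N_{k,\theta}$, $\beta^N_\theta$ and $\Theta^N_{k,\theta}$, together with Lemma \ref{lemma_usefulproperties}(A) (which identifies the normalizing denominator $\E(\exp(-\int_0^t\lambda_0\,ds))$ with $\exp(-\int_0^t\rho^N_{0,\theta}\,ds)$), this density equals precisely the expression in \eqref{distrequ1} that governs the one-step transition of $W^N_\theta$. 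Since the two embedded chains share the deterministic initial state $(0,v_0,z_0)$, Part (A) follows by induction on $i$: the inductive hypothesis gives equality in law of the triples at the $(i-1)$-th jump, the strong Markov property makes the next transition a function of the current $(v,z)$ only, and the identity just established matches the two transition kernels, so the post-jump triples --- new time, new slow state $v+\zeta^s_k$, new fast state $e+\zeta^f_k$ --- agree in distribution.

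For Part (B) I would decompose according to the last slow jump before $t$, writing $\E(f(X^N_{\gamma_2,\theta}(t)))=\sum_{i\ge 0}\E[\ind_{\{\delta^N_i\le t<\delta^N_{i+1}\}}f(X^N_{\gamma_2,\theta}(t))]$. Conditioning on $\delta^N_i=s\le t$ and $(v,z)=(\Pi_2X^N_{\gamma_2,\theta}(\delta^N_i),(I-\Pi_2)X^N_{\gamma_2,\theta}(\delta^N_i))$, the same Markov-chain-plus-killing picture gives
\[
\E\big[\ind_{\{t<\delta^N_{i+1}\}}f(X^N_{\gamma_2,\theta}(t))\,\big|\, \delta^N_i=s,(v,z)\big]=\sum_{e\in\mathbb{H}_v}f(v+e)\,\beta^N_\theta(t-s,v,z,e).
\]
On the $W^N_\theta$ side, \eqref{whatistau} identifies $W^N_\theta(t)$ with $(t-\sigma^N_i,v,z)$ on $\{\sigma^N_i\le t<\sigma^N_{i+1}\}$, and the survival probability read off from the increment density \eqref{densityoftimeincerement} makes the analogous conditional expectation equal to $f^N_\theta(t-s,v,z)\exp(-\int_0^{t-s}\rho^N_{0,\theta}(u,v,z)\,du)$; by the very definition \eqref{defn_ftheta} of $f^N_\theta$ this equals the same sum $\sum_{e}f(v+e)\beta^N_\theta(t-s,v,z,e)$. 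Hence both conditional expectations coincide as a single function $g(s,v,z)$ of the jump data, and integrating $\ind_{\{s\le t\}}g$ against the two laws --- identical by Part (A) --- yields equality term by term; summing over $i$ gives the claim.

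The most delicate step is the rigorous justification of the competing-risk computation: one must invoke the strong Markov property and the independence of the Poisson clocks $\{Y_k\}$ to argue that on each inter-jump interval the fast component is genuinely the autonomous chain $Z^N_\theta$ with generator $N\mathbb{C}^v_\theta$, and that the first $\Gamma_2$ event is an independent thinning at rate $\lambda_0$, so that the pre-jump fast state carries exactly the law encoded in $\beta^N_\theta$ and $\Theta^N_{k,\theta}$. A secondary, more routine obstacle is controlling the tail of the sum over $i$ in Part (B): one must rule out explosion of the slow process on $[0,t]$ and justify interchanging the infinite sum with the expectation, which is where the linear-growth hypotheses (Assumptions \ref{assumptions1} and \ref{assforsensitivityresult}) and the polynomial growth of $f$ enter, supplying the moment bounds needed for dominated convergence.
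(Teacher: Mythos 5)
Your proposal is correct and follows essentially the same route as the paper's own proof: the same induction for part (A) based on the competing-Poisson (thinning) computation that identifies the joint pre-jump density $\lambda_k(v+e,\theta)\,\beta^N_\theta(t,v,z,e)$ with the transition kernel \eqref{distrequ1} of $W^N_\theta$ via Lemma \ref{lemma_usefulproperties}(A), and the same last-jump decomposition with conditional expectations for part (B), including the use of part (A) to transfer laws and the moment bounds (Lemma \ref{mainlemmapp}, Lemma \ref{lemma1:app}) to justify the infinite sum.
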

\begin{remark}
\label{rem:functioninclassc}
Note that for any $v \in \Pi_2 \mathcal{S}$ and $z \in \mathbb{H}_v$, the mapping $t \mapsto f^N_\theta(t,v,z)$ is continuously differentiable with respect to $t$. Let  $\partial  f^N_\theta(t,v,z) /\partial t$ denote the derivative of this map. 
Since $f$ is polynomially growing with respect to projection $\Pi_2$, the sequences of functions $\{f^N_\theta : N \in \N\}$, $\{\partial  f^N_\theta /\partial t : N \in \N\}$ and $\{ \mathbb{B}^N_\theta f^N_\theta : N \in \N \}$ are also polynomially growing with respect to projection $\Pi_{ \hat{\mathcal{S}} }$.
\end{remark}
\begin{proof}
We prove part (A) by induction in $i$. Relation \eqref{mainpropparta} certainly holds for $i=0$. Suppose it holds for $(i-1)$ for some $i \in \N$. Then
\begin{align}
\label{induchyp}
\left( \delta^N_{i-1} , X^N_{S,\theta}(\delta^N_{i-1}), X^N_{F,\theta}(\delta^N_{i-1}) \right) \stackrel{d}{=} W^N_\theta( \sigma^N_{i-1} ),
\end{align}
where the processes $X^N_{S,\theta}$ and $X^N_{F,\theta}$ are given by \eqref{defn_slowandfastprocesses}.

For any $v \in \Pi_2 \mathcal{S}$ and $z \in \mathbb{H}_v$ let $E_{i-1}(v,z)$ denote the event
\begin{align}
\label{eventdefni_1}
E_{i-1}(v,z) = \{ X^N_{S,\theta}(\delta^N_{i-1} ) = v,  X^N_{F,\theta}(\delta^N_{i-1} ) = z  \}. 
\end{align}
Let $\eta_i$ be the $\Gamma_2$-valued random variable that gives the jump direction of the process $X^N_{S,\theta}$ at time $\delta^N_i$. For any $t >0$, $k \in \Gamma_2$ and $e \in \mathbb{H}_v$ we can write
\begin{align}
\label{limitrelations0}
& \lim_{h \to 0}  \frac{ \P \left(  \delta^N_i \in (\delta^N_{i-1} +t,\delta^N_{i-1} + t+h) , X^N_{S,\theta}( \delta^N_i) = v + \zeta^s_k, X^N_{F,\theta}( \delta^N_i ) = e + \zeta^f_k \middle\vert E_{i-1}(v,z)   \right)  }{h}  \notag \\
& =\lim_{h \to 0}  \frac{ \P \left(  \delta^N_i - \delta^N_{i-1}  \in (t,t+h) , \eta_i = k , X^N_{F,\theta}( \delta^N_i -)  =e \middle\vert E_{i-1}(v,z)   \right)  }{h}.
\end{align}
Let $\{ \bar{Z}^N_\theta(t) : t \geq 0\}$ be an independent Markov process with initial state $z$ and generator $N \mathbb{C}^{v}_\theta$. For each $k \in \Gamma_2$ let $u_k$ be an independent $\textnormal{Unif}(0,1)$ random variable. 
Using the observation made in Remark \eqref{fastmarkovprocess}, and the random time change representation \eqref{rtc_slowprocess} we can write
\begin{align}
\label{limitrelations1}
& \P \left(  \delta^N_i - \delta^N_{i-1}  \in (t,t+h) , \eta_i = k , X^N_{F,\theta}( \delta^N_i -)  =e \middle\vert E_{i-1}(v,z)   \right) \notag \\
& = \P\left(  \int_{0 }^{t+h} \lambda_k( v + \bar{Z}^N_\theta(u) ,\theta )du \geq -\log u_k \geq  \int_{0 }^{t} \lambda_k( v+  \bar{Z}^N_\theta(u),\theta  )du ,   \int_{0 }^{t} \lambda_j(  v + \bar{Z}^N_\theta(u) ,\theta  )du < -\log u_j  \right. \notag \\
& \left.  \  \  \ \  \textnormal{ for all } j \in \Gamma_2 - \{k\}  \textnormal{ and }  \bar{Z}^N_\theta(t) = e  \right) +o(h) \notag \\
& =   \lambda_k( v+ e , \theta)  \E\left(  \ind_{ \{ \bar{Z}^N_\theta(t) = e \} }  \exp \left( - \int_{0}^{t} \lambda_0( v+ \bar{Z}^N_\theta(u) ,\theta ) du   \right)     \right)h +o(h), 
\end{align}
where $o(h)$ denotes any quantity which upon division by $h$, goes to $0$ as $h \to 0$. To obtain \eqref{limitrelations1} we integrated with respect to the joint density of $\{ u_k : k \in \Gamma_2\}$. Note that due to \eqref{defnbetanl} and \eqref{defnthetanl} we get
\begin{align*}
&\lambda_k( v+ e , \theta)  \E\left(  \ind_{ \{ \bar{Z}^N_\theta(t) = e \} }  \exp \left( - \int_{0}^{t} \lambda_0( v+ \bar{Z}^N_\theta(u) ,\theta ) du    \right)   \right)\\ 
& =  \lambda_k( v+ e, \theta) \beta^N_{\theta}(t,v,z,e) \\
& = \rho^N_{k,\theta}(t,v,z) \exp \left(  -\int_{0}^{t} \rho^N_{0,\theta}(s,v,z) ds\right)\Theta^N_{k,\theta}(t,v,z,e) .
\end{align*}
Hence relations \eqref{limitrelations0} and \eqref{limitrelations1} yield
\begin{align*}
&\lim_{h \to 0}  \frac{ \P \left(  \delta^N_i \in ( \delta^N_{i-1} +t, \delta^N_{i-1} + t+h) , X^N_{S,\theta}( \delta^N_i) = v + \zeta^s_k, X^N_{F,\theta}( \delta^N_i ) = e + \zeta^f_k \middle\vert E_{i-1}(v,z)   \right)  }{h} \\
& =  \rho^N_{k,\theta}(t,v,z) \exp \left(  -\int_{0}^{t} \rho^N_{0,\theta}(s,v,z) ds\right)\Theta^N_{k,\theta}(t,v,z,e).
\end{align*}
From \eqref{distrequ1} it follows that for all $v \in \Pi_2 \mathcal{S}$ and $z \in \mathbb{H}_v$
\begin{align*}
& \lim_{h \to 0}  \frac{ \P \left(  \sigma^N_i \in (\sigma^N_{i-1}+ t, \sigma^N_{i-1}  t+h) , V^N_{\theta}( \sigma^N_i) =  v +\zeta^s_k ,  Z^N_{\theta}( \sigma^N_i ) = e +\zeta^f_k  \middle\vert E_{i-1}(v,z)   \right)  }{h}   \\
&=\lim_{h \to 0}  \frac{ \P \left(  \delta^N_i \in ( \delta^N_{i-1} +t, \delta^N_{i-1} + t+h), X^N_{S,\theta}( \delta^N_i) = v + \zeta^s_k, X^N_{F,\theta}( \delta^N_i ) = e+ \zeta^f_k \middle\vert E_{i-1}(v,z)   \right)  }{h}.
\end{align*}
This relation and \eqref{induchyp} imply that 
\begin{align*}
\left( \delta^N_{i} , X^N_{S,\theta}(\delta^N_{i}), X^N_{F,\theta}(\delta^N_{i}) \right) \stackrel{d}{=} \left(  \sigma^N_{i}, V^N_\theta( \sigma^N_{i} ) , Z^N_\theta( \sigma^N_{i}) \right),
\end{align*}
which completes the proof of part (A).

We now prove part (B). From Remark \ref{rem:functioninclassc} and Lemma \ref{mainlemmapp} we can conclude that for any $t \geq 0$
\begin{align*}
 \sup_{N \in \N} \E\left(  f^N_{\theta}( W^N_{\theta}(t)  )\right) < \infty .
\end{align*}
Moreover, one can rework the proof of part (C) of Lemma \ref{lemma1:app} to show that 
\begin{align*}
\sup_{N \in \N} \E\left(  f( X^N_{\gamma_2,\theta}(t)  )\right) < \infty \textnormal{ for any } t \geq 0.
\end{align*}
Let $\{\mathcal{F}_t\}$ be the filtration generated by the process $\{  X^{N}_{\gamma_2,\theta}(t)  : t \geq 0 \}$. Then we can write
\begin{align}
\label{conditionedxngamma}
\E \left(  f( X^{N}_{\gamma_2,\theta}(t) )  \right) & =  \sum_{i=1}^{\infty} \E \left( \ind_{ \{ \delta^N_{i-1} \leq t < \delta^N_{i} \} }  f( X^N_{\gamma_2, \theta}(t)  )    \right)  \notag \\ 
 & = \sum_{i=1}^{\infty} \E \left( \ind_{ \{ \delta^N_{i-1} \leq t < \delta^N_{i} \} }  f( X^N_{S, \theta}(t) + X^N_{F, \theta}(t)   )    \right) \notag \\
& = \sum_{i=1}^{\infty} \E \left(  \ind_{ \{ \delta^N_{i-1} \leq t \} }  \E\left(   \ind_{ \{ \delta^N_{i} -\delta^N_{i-1} > t -\delta^N_{i-1}    \}  }  f( X^N_{S, \theta}(\delta^N_{i-1}) + X^N_{F, \theta}(t)   )   \vert \mathcal{F}_{ \delta^N_{i-1} }  \right)    \right).
\end{align}
For any $v \in \Pi_2\mathcal{S}$ and $z \in \mathbb{H}_v$, let $E_{i-1}(v,z)$ be the event given by \eqref{eventdefni_1}. Suppose $\mathbb{H}_v= \{e_1,\dots,e_m\}$ and $\{ \bar{Z}^N_\theta(t)  : t \geq 0\}$ is an independent Markov process with initial state $z$ and generator $N\mathcal{C}^{v}_\theta$. For each $k \in \Gamma_2$ let $u_k$ be an independent $\textnormal{Unif}(0,1)$ random variable. 
Using the observation made in Remark \eqref{fastmarkovprocess}, and the random time change representation \eqref{rtc_slowprocess}, for any $s<t$ we can write
\begin{align*}
& \E\left(   \ind_{ \{ \delta^N_{i} -\delta^N_{i-1} > t -\delta^N_{i-1}    \}  }  f( X^N_{S, \theta}(\delta^N_{i-1}) + X^N_{F, \theta}(t)   )   \vert E_{i-1}(v,z)  , \delta^N_{i-1} = s\ \right) \\
&  =  \E\left(   \ind_{ \{ \delta^N_{i} -\delta^N_{i-1} > t -s   \}  }  f( v + \bar{Z}^N_{\theta}(t-s)   )  \right) \\
& =\sum_{e \in \mathbb{H}_v } \P\left(  \int_{0 }^{t-s} \lambda_k(  v + \bar{Z}^N_\theta(u) ,\theta  )du < -\log u_k \textnormal{ for all } k \in \Gamma_2, \ \bar{Z}^N_\theta(t-s) = e   \right) f(v+e) \notag \\
& = \sum_{e \in \mathbb{H}_v } \   \E\left(  \ind_{ \{ \bar{Z}^N_\theta(t-s) = e \} }  \exp \left( - \int_{0}^{t-s} \lambda_0( v+ \bar{Z}^N_\theta(u) ,\theta ) du    \right)    \right)f(v+e). 
\end{align*}
The last inequality is obtained by integrating with respect to the joint density of $\{ u_k : k \in \Gamma_2\}$. Due to \eqref{defnbetanl} and \eqref{defn_ftheta} we obtain
\begin{align*}
&\E\left(   \ind_{ \{ \delta^N_{i} -\delta^N_{i-1} > t -\delta^N_{i-1}    \}  }  f( X^N_{S, \theta}(\delta^N_{i-1}) + X^N_{F, \theta}(t)   )   \vert  E_{i-1}(v,z)  , \delta^N_{i-1} = s \right) \\&= \sum_{e \in \mathbb{H}_v } f(v+e) \beta^N_{\theta}(t-s,v,z,e) \\
& = \exp \left( -\int_{0}^{t-s}  \rho^N_{0,\theta} (u,v,z)du \right) f^N_\theta(t-s,v,z),
\end{align*}
which shows that
\begin{align*}
 &\E\left(   \ind_{ \{ \delta^N_{i} -\delta^N_{i-1} > t -\delta^N_{i-1}    \}  }  f( X^N_{S, \theta}(\delta^N_{i-1}) + X^N_{F, \theta}(t)   )   \vert \mathcal{F}_{ \delta^N_{i-1} }  \right) \\
 &  = \exp \left( -\int_{0}^{t-\delta^N_{i-1} }  \rho^N_{0,\theta} (u,X^N_{S, \theta}(\delta^N_{i-1} ) ,X^N_{F, \theta}(\delta^N_{i-1} ) )du \right) f^N_\theta(t-\delta^N_{i-1} ,X^N_{S, \theta}(\delta^N_{i-1} ) , X^N_{F, \theta}(\delta^N_{i-1} )   ).
\end{align*}
Substituting this relation in \eqref{conditionedxngamma} and using part (A) gives us
\begin{align*}
& \E \left(  f( X^{N}_{\gamma_2,\theta }(t) )  \right) \\
&= \sum_{i=1}^{\infty} \E \left(  \ind_{ \{ \delta^N_{i-1} \leq t \} }  \exp \left( -\int_{0}^{t-\delta^N_{i-1} }  \rho^N_{0,\theta} (u,X^N_{S, \theta}(\delta^N_{i-1} ) ,X^N_{F, \theta}(\delta^N_{i-1} ))du \right) f^N_\theta(t-\delta^N_{i-1} ,X^N_{S, \theta}(\delta^N_{i-1} ) , X^N_{F, \theta}(\delta^N_{i-1} )   )     \right) \\
&= \sum_{i=1}^{\infty} \E \left(  \ind_{ \{ \sigma^N_{i-1} \leq t \} }  \exp \left( -\int_{0}^{t-\sigma^N_{i-1} }  \rho^N_{0,\theta} (u,V^N_{\theta}(\sigma^N_{i-1} ) ,Z^N_{\theta}(\sigma^N_{i-1} ))du \right) f^N_\theta(t-\sigma^N_{i-1} ,V^N_{\theta}(\sigma^N_{i-1} ) , Z^N_{\theta}(\sigma^N_{i-1} )   )     \right).
\end{align*}
However from \eqref{whatistau} and \eqref{densityoftimeincerement} we can conclude that
\begin{align*}
& \sum_{i=1}^{\infty} \E \left(  \ind_{ \{ \sigma^N_{i-1} \leq t \} }  \exp \left( -\int_{0}^{t-\sigma^N_{i-1} }  \rho^N_{0,\theta} (u,V^N_{\theta}(\sigma^N_{i-1} ) ,Z^N_{\theta}(\sigma^N_{i-1} ))du \right) f^N_\theta(t-\sigma^N_{i-1} ,V^N_{\theta}(\sigma^N_{i-1} ) , Z^N_{\theta}(\sigma^N_{i-1} )   )     \right) \\
& =  \sum_{i=1}^{\infty} \E \left(  \ind_{ \{ \sigma^N_{i-1} \leq t  < \sigma^N_i \} }   f^N_\theta(t-\sigma^N_{i-1} ,V^N_{\theta}(\sigma^N_{i-1} ) , Z^N_{\theta}(\sigma^N_{i-1} )   )     \right) \\
& = \sum_{i=1}^{\infty} \E \left(  \ind_{ \{ \sigma^N_{i-1} \leq t  < \sigma^N_i \} }   f^N_\theta( \tau^N_\theta(t) ,V^N_{\theta}(t) , Z^N_{\theta}(t ) )     \right) \\
& =  \E \left(   f^N_\theta( \tau^N_\theta(t) ,V^N_{\theta}(t) , Z^N_{\theta}(t ) )     \right) \\
& = \E \left(   f^N_\theta( W^N_{\theta}(t )      \right) .
\end{align*}
This proves part (B) of the proposition.
\end{proof}

Part (B) of Assumption \ref{assforsensitivityresult} says that a Markov process with generator $\mathbb{C}^v_\theta$ is ergodic and its unique stationary distribution is $\pi^v_\theta \in \mathcal{P}( \mathbb{H}_v )$. Since $\mathbb{H}_v$ is finite, we can view $\pi^v_\theta $ as a vector in $\R^n$ where $n = | \mathbb{H}_v|$. The differentiability of $\pi^z_{\theta}$ with respect to $\theta$ follows from arguments given in Section \ref{sec:fmcs}.
Let $\{f^N : N \in \N \}$ be a sequence of real valued functions on $\R_+$ and let $c$ be a constant. In the next lemma we will use the notation $f^N \rightarrow c$ to denote that the sequence of functions $\{ (\hat{f}^N -c) :  N \in \N \}$ satisfies Condition \ref{regularitycondition}.

\begin{lemma}
\label{limitinglemma}
Fix a $v \in \Pi_2 \mathcal{S}$ and $z \in \mathbb{H}_v$. Then we have the following.
\begin{itemize}
\item[(A)] For any $k \in \Gamma_2$
\begin{align*}
\rho^N_{k,\theta}(\cdot,v,z) \rightarrow \hat{\lambda}_k(v, \theta) \  \textnormal{ and } \ \frac{ \partial \rho^N_{k,\theta}(\cdot,v,z) }{\partial \theta} \rightarrow \frac{ \partial \hat{\lambda}_k(v, \theta) }{\partial \theta},
\end{align*}
where $\hat{\lambda}_k$ is defined by \eqref{defn_lambdahattheta}.

\item[(B)] For any $k \in \Gamma_2$ and $e \in \mathbb{H}_v$
\begin{align*}
\Theta^N_{k,\theta  }(\cdot, v,z,e) \rightarrow \frac{ \lambda_k(v+e,\theta) \pi^v_\theta(e)}{ \hat{\lambda}_k(v, \theta) } \  \textnormal{ and } \ \frac{\partial \Theta^N_{k,\theta}(\cdot, v,z,e)   }{ \partial \theta} \rightarrow \frac{ \partial  }{\partial \theta} \left( \frac{ \lambda_k(v+e,\theta) \pi^v_{\theta}(e)   }{ \hat{\lambda}_k(v,\theta) } \right). 
\end{align*}
\item[(C)] Fix a function $f : \mathcal{S}  \to \R $. Let $f^N_\theta$ and $f_\theta$ be given by \eqref{defn_ftheta} and \eqref{defn_fthetalimit} respectively. Then 
\begin{align*}
f^N_\theta(\cdot,v,z)  \rightarrow f_\theta(v)  \  \textnormal{ and } \ \frac{\partial f^N_\theta(\cdot,v,z)  }{ \partial \theta} \rightarrow \frac{ \partial f_\theta(v)   }{\partial \theta}.
\end{align*}
\end{itemize}
\end{lemma}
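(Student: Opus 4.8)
The plan is to reduce all three parts to the single asymptotic representation furnished by Proposition \ref{prop_regularity}, applied to the fast chain $Z^N_\theta$ with generator $N\mathbb{C}^v_\theta$ and weight $\Lambda(\cdot,\theta)=\lambda_0(v+\cdot,\theta)=\sum_{k\in\Gamma_2}\lambda_k(v+\cdot,\theta)$. For this choice the perturbed eigenvalue \eqref{perturbedeigenvalue} is $\lambda_\theta=\sum_{e\in\mathbb{H}_v}\lambda_0(v+e,\theta)\pi^v_\theta(e)=\sum_{k\in\Gamma_2}\hat\lambda_k(v,\theta)$, and the proposition gives $\beta^N_\theta(t,v,z,e)=e^{-\lambda_\theta t}\pi^v_\theta(e)+\hat\beta^N_{\theta,e}(t)$ where both $\hat\beta^N_\theta$ and $\partial\hat\beta^N_\theta/\partial\theta$ satisfy Condition \ref{regularitycondition}. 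I would first record an elementary \emph{calculus of Condition}: if $\phi^N\to c$ and $\psi^N\to c'$ in the sense of the lemma, then so do $\phi^N+\psi^N$, $g(t)\phi^N$ for any bounded deterministic $g$, and $\phi^N/D^N$ provided $\inf_{t\in[0,T]}D^N(t)$ is bounded below by a positive constant uniformly in $N$; differentiation in $\theta$ is compatible with these operations because $\partial_\theta\hat\beta^N_\theta$ is controlled as well. A key structural fact is that the denominator $D_0(t):=\langle\bar{1}_m,\beta^N_\theta(t)\rangle=e^{-\lambda_\theta t}+\langle\bar{1}_m,\hat\beta^N_\theta(t)\rangle$ equals $\exp(-\int_0^t\rho^N_{0,\theta})$ by Lemma \ref{lemma_usefulproperties}(A), and Corollary \ref{corr_regularity} bounds $\langle\bar{1}_m,\hat\beta^N_\theta\rangle$ and its $\theta$-derivative uniformly on \emph{all} of $[0,T]$; since $e^{-\lambda_\theta t}\ge e^{-\lambda_\theta T}>0$, this makes $D_0$ bounded below on $[0,T]$ for all large $N$.

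Parts (A) and (C) then fall out, because in each the relevant denominator is exactly $D_0$. For (A), $\rho^N_{k,\theta}(\cdot,v,z)=\big(\sum_e\lambda_k(v+e,\theta)\beta^N_\theta(\cdot,v,z,e)\big)/D_0$, and subtracting $\hat\lambda_k(v,\theta)$ yields $\big(\sum_e\lambda_k(v+e,\theta)\hat\beta^N_{\theta,e}-\hat\lambda_k(v,\theta)\langle\bar{1}_m,\hat\beta^N_\theta\rangle\big)/D_0$; the deterministic $e^{-\lambda_\theta t}$ terms cancel, the surviving numerator is a bounded combination of components of $\hat\beta^N_\theta$ and hence satisfies Condition \ref{regularitycondition}, and dividing by the bounded-below $D_0$ preserves this. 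The $\theta$-derivative is the same computation with $\partial_\theta\hat\beta^N_\theta$ replacing $\hat\beta^N_\theta$, where one checks that the extra deterministic terms, including the $t\,\partial_\theta\lambda_\theta$ contributions from the numerator, cancel exactly against those from $\partial_\theta D_0$. Part (C) is identical with $f(v+e)$ in place of $\lambda_k(v+e,\theta)$: here $f^N_\theta(\cdot,v,z)=\big(\sum_e f(v+e)\beta^N_\theta(\cdot,v,z,e)\big)/D_0\to\sum_e f(v+e)\pi^v_\theta(e)=f_\theta(v)$, which is precisely \eqref{defn_fthetalimit}, and the derivative follows verbatim.

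Part (B) needs more care. Using \eqref{defnjumpdistr1} and Lemma \ref{lemma_usefulproperties}(A) I would rewrite $\Theta^N_{k,\theta}(t,v,z,e)$ in the symmetric form $\lambda_k(v+e,\theta)\beta^N_\theta(t,v,z,e)\big/\mathrm{Den}(t)$ with $\mathrm{Den}(t):=\sum_{e'}\lambda_k(v+e',\theta)\beta^N_\theta(t,v,z,e')$, whose claimed limit is the $t$-independent quantity $\lambda_k(v+e,\theta)\pi^v_\theta(e)/\hat\lambda_k(v,\theta)$, the $e^{-\lambda_\theta t}$ again cancelling. For the undifferentiated statement I would exploit that $\Theta^N_{k,\theta}$ and its limit both lie in $[0,1]$ (by Lemma \ref{lemma_usefulproperties}(B)): the supremum over $[\epsilon_N,T]$ tends to zero by the uniform closeness of $\beta^N_\theta$ to $e^{-\lambda_\theta t}\pi^v_\theta$ there, while the integral over $[0,T]$ is controlled by splitting at $\epsilon_N$, the piece on $[0,\epsilon_N]$ being at most $\epsilon_N=1/\sqrt N\to0$ and the piece on $[\epsilon_N,T]$ at most $T$ times the supremum. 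Thus boundedness alone settles the integral, and no lower bound on $\mathrm{Den}$ is required.

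The genuine obstacle is the $\theta$-derivative in (B): $\partial_\theta\Theta^N_{k,\theta}$ need not be bounded near $t=0$, so the splitting trick fails, and one must instead write $\partial_\theta\Theta^N_{k,\theta}-\partial_\theta(\text{limit})$ as a bounded combination of $\hat\beta^N_\theta$ and $\partial_\theta\hat\beta^N_\theta$ divided by $\mathrm{Den}^2$, so that its $L^1[0,T]$ norm and its supremum over $[\epsilon_N,T]$ both vanish. This requires $\mathrm{Den}$ to be bounded below on all of $[0,T]$, which — unlike $D_0$ — Corollary \ref{corr_regularity} does not deliver, since $\mathrm{Den}$ is a $\lambda_k$-weighted rather than unweighted sum. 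I would supply this bound directly from the dynamics: since $\exp(-\int_0^t\lambda_0)\ge e^{-T\Lambda_{\max}}$ on $[0,T]$ with $\Lambda_{\max}=\max_e\lambda_0(v+e,\theta)<\infty$, one has $\mathrm{Den}(t)\ge e^{-T\Lambda_{\max}}\lambda_k(v+z,\theta)\,\P(Z^N_\theta(t)=z)$, and $\P(Z^N_\theta(t)=z)=p_{zz}(Nt)$ is uniformly bounded below because for an ergodic finite chain $s\mapsto p_{zz}(s)$ is continuous, strictly positive, and tends to $\pi^v_\theta(z)>0$, so that $\inf_{s\ge0}p_{zz}(s)>0$. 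Hence, provided $\lambda_k(v+z,\theta)>0$, $\mathrm{Den}$ is bounded below uniformly in $t\in[0,T]$ and $N$, which closes the derivative estimate; the degenerate case $\lambda_k(v+z,\theta)=0$ (where $\rho^N_{k,\theta}$ may vanish) is absorbed by the convention fixed just after \eqref{defrho0n} and reduces to the positive case by continuity.
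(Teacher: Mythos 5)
Your proposal follows the paper's own strategy: apply Proposition \ref{prop_regularity} to the fast chain with weight $\Lambda(\cdot,\theta)=\lambda_0(v+\cdot,\theta)$ (your $\lambda_\theta$ is the paper's $d_\theta(v)$), write $\rho^N_{k,\theta}$, $f^N_\theta$ and $\Theta^N_{k,\theta}$ as ratios of weighted and unweighted combinations of the $\beta^N_\theta(\cdot,v,z,e)$, cancel the common deterministic factor $e^{-d_\theta(v)t}$, and propagate Condition \ref{regularitycondition} through sums, products with bounded coefficients, and quotients whose denominators are bounded below --- the last being supplied for the unweighted denominator $D_0=\exp(-\int_0^t\rho^N_{0,\theta})$ by Corollary \ref{corr_regularity}, exactly as in the paper's \eqref{conv_corr1}--\eqref{conv_corr2}. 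Parts (A) and (C) of your argument coincide with the paper's proof, just written in more detail. For part (B) you are in fact more careful than the paper, which dismisses (B) as ``immediate'' from (A): you correctly observe that the undifferentiated claim follows from $\Theta^N_{k,\theta}\in[0,1]$ together with splitting the integral at $\epsilon_N$, whereas the $\theta$-derivative needs a lower bound, uniform in $N$ and $t\in[0,T]$, on the weighted denominator $\mathrm{Den}(t)=\sum_{e'}\lambda_k(v+e',\theta)\beta^N_\theta(t,v,z,e')$, which neither Corollary \ref{corr_regularity} nor part (A) provides near $t=0$; your bound $\mathrm{Den}(t)\ge e^{-T\Lambda_{\max}}\lambda_k(v+z,\theta)\inf_{s\ge0}p_{zz}(s)$, with $\inf_{s\ge0}p_{zz}(s)>0$ for an ergodic finite chain, is correct and fills a hole that the paper's one-line treatment leaves open.

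The one genuine gap is the degenerate case $\lambda_k(v+z,\theta)=0$ with $\hat\lambda_k(v,\theta)>0$, which the lemma does cover since $z\in\mathbb{H}_v$ is arbitrary (and which occurs in practice: in the paper's heat-shock example $\lambda_3(x)=5x_2$ vanishes at states with $x_2=0$). There your lower bound is vacuous, and indeed $\mathrm{Den}(t)\to0$ as $t\to0^+$ because $\mathrm{Den}(0)=\lambda_k(v+z,\theta)=0$; the appeal to ``the convention after \eqref{defrho0n} and continuity'' is not an argument --- $z$ is a fixed initial state, not a parameter you can perturb, and $\Theta^N_{k,\theta}(\cdot,v,z,z)$ is genuinely discontinuous at $t=0$ in this case (it equals $1$ at $t=0$ by the convention and $0$ for all $t>0$). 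The case needs a separate treatment, for instance: for $t\ge 1/N$ one still has $\mathrm{Den}(t)\ge e^{-T\Lambda_{\max}}\lambda_k(v+e^*,\theta)\inf_{s\ge1}p_{ze^*}(s)>0$ for any $e^*$ with $\lambda_k(v+e^*,\theta)>0$ (irreducibility makes $s\mapsto p_{ze^*}(s)$ positive on $(0,\infty)$ with positive limit $\pi^v_\theta(e^*)$), so your quotient argument runs on $[1/N,T]$; while on $[0,1/N]$ the time rescaling $s=Nt$ shows that numerator and denominator of $\Theta^N_{k,\theta}$, and of its $\theta$-derivative, vanish to the same order at $s=0$, so $\partial_\theta\Theta^N_{k,\theta}$ remains bounded uniformly in $N$ there and this interval contributes only $O(1/N)$ to the integral in Condition \ref{regularitycondition}. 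With that patch your proof is complete; the paper itself is silent on this point, so this is a defect you inherited from the statement's context rather than introduced.
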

\begin{proof}
Assume that $\mathbb{H}_v = \{e_1,\dots,e_m\}$. For each $l=1,\dots,m$, let $\hat{\beta}^N_{\theta, l} : \R_+ \to \R$ be given by
\begin{align*}
\hat{\beta}^N_{\theta, l}(t) = \beta^N_{\theta}(t,v,z,e_l) - \exp( - d_\theta (v) t)\pi^v_{\theta}(e_l),
\end{align*}
where $d_\theta(v) = \sum_{e \in \mathbb{H}_v }  \lambda_0(v+ e,\theta ) \pi^v_{\theta}(e)$.
Observe that
\begin{align*}
\exp\left( -\int_{0}^t \rho^N_{0,\theta}(s,v,z)ds  \right) = \sum_{e \in \mathbb{H}_v  }\beta^N_{\theta}(t,v,z,e) = \sum_{l=1}^m   \hat{\beta}^N_{\theta,l}(t) + \exp( - d_\theta (v) t). 
\end{align*}
From Corollary \ref{corr_regularity} we get that for any $T >  0$
\begin{align}
\label{conv_corr1}
& \lim_{N \to \infty} \sup_{t \in [0,T]} \left| \exp\left( -\int_{0}^t \rho^N_{0,\theta}(s,v,z)ds  \right)  -\exp( - d_\theta (v) t)  \right| = 0 \\
\label{conv_corr2}
 \textnormal{ and } \ & \lim_{N \to \infty} \sup_{t \in [0,T]} \left|  \frac{ \partial  }{\partial \theta}\exp\left( -\int_{0}^t \rho^N_{0,\theta}(s,v,z)ds  \right)  - \frac{ \partial }{ \partial \theta }\exp( - d_\theta (v) t)   \right| = 0 
\end{align}
Using part (A) of Lemma \ref{lemma_usefulproperties} we can write
\begin{align*}
\rho^N_{k,\theta}(t,v,z) = \frac{  \sum_{l=1}^m \lambda_k(v+e_l,\theta )  \hat{\beta}^N_{\theta,l}(t)  } { \sum_{l=1}^m  \hat{\beta}^N_{\theta,l}(t)   } . 
\end{align*}
 From Proposition \ref{prop_regularity} we can see that each $\hat{\beta}^N_{\theta, l}$ satisfies Condition \ref{regularitycondition}. This fact along with \eqref{conv_corr1} and \eqref{conv_corr2} proves part (A).

The proof of part (B) is immediate from the definition of $\Theta^N_{k,\theta}$ (see \eqref{defnthetanl}), part (A), \eqref{conv_corr1} and \eqref{conv_corr2}. Note that $f^N_\theta$ can be written as
\begin{align*}
f^N_\theta(t,v,z) = \frac{  \sum_{l=1}^m f(v+e_l) \hat{\beta}^N_{\theta,l}(t)   } { \sum_{l=1}^m  \hat{\beta}^N_{\theta,l}(t) } ,
\end{align*}
which enables us to prove part (C) in the same way as part (A).
\end{proof}

For the next proposition, recall the definition of the projection map $\Pi_{\hat{ \mathcal{S} }}$ from \eqref{defn:projhats} and the definition of $\hat{\mathbb{A} }_\theta$ from \eqref{finallimitgen}.

\begin{proposition}
\label{prop_limitresult}
Fix $(t_0,v_0,z_0) \in  \hat{ \mathcal{S} }$ and let $W^N_\theta$ be the Markov process with generator $\mathbb{B}^N_\theta$ and initial state $(t_0,v_0,z_0)$.
Then the sequence of processes $\{ W^N_\theta : N \in \N \}$ is tight in the space $D_{ \hat{\mathcal{S}}}[0,\infty)$. Let $W_\theta$ be a limit point of this sequence and let $\hat{X}_\theta$ be the process with generator $\hat{\mathbb{A} }_\theta$ and initial state $v_0$. Then the process $\Pi_{\hat{ \mathcal{S} }}W_\theta$ has the same distribution as the process $\hat{X}_\theta$.
\end{proposition}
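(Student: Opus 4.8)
The plan is to first establish tightness of $\{W^N_\theta : N \in \N\}$ in $D_{\hat{\mathcal{S}}}[0,\infty)$, and then to identify the distribution of the projected limit $\Pi_{\hat{\mathcal{S}}}W_\theta$ by a martingale-problem argument that is closed off using the well-posedness of the martingale problem for $\hat{\mathbb{A}}_\theta$. For tightness I would work from the random time change representation in Remark \ref{rem:construction}. The age component is automatically confined to $[0,t]$, since $\tau^N_\theta$ is reset to zero at every jump, so $\tau^N_\theta(s)\le s$. For the slow component $V^N_\theta$, I would derive moment bounds uniform in $N$ by reworking the estimates behind Lemma \ref{lemma1:app}: by Lemma \ref{limitinglemma}(A) the jump rates $\rho^N_{k,\theta}(\cdot,v,z)$ are controlled by $\hat{\lambda}_k(v,\theta)$, and the functions $\{\hat{\lambda}_k : k\in\Gamma_2\}$ satisfy part (B) of Assumption \ref{assumptions1} by part (E) of Assumption \ref{assforsensitivityresult}; the resulting linear growth of the population-increasing propensities yields $\sup_N \E(\langle \bar{1}_d, V^N_\theta(t)\rangle)<\infty$. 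Since $|\mathbb{H}_v|$ is polynomially growing (Assumption \ref{assforsensitivityresult}(A)) and $Z^N_\theta$ stays in $\mathbb{H}_{V^N_\theta}$ up to the bounded fast shift $\zeta^f_k$, these bounds also give compact containment for the $Z$-component. Combined with the bounded jump sizes, a standard tightness criterion (see Chapter 3 of \cite{EK}) applied to the representation \eqref{rtcr:vntheta} gives tightness of $\{W^N_\theta\}$.

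To identify the limit, fix any bounded $g:\Pi_2\mathcal{S}\to\R$ and lift it to $\tilde g\in\mathcal{C}$ by $\tilde g(t,v,z)=g(v)$ (which lies in $\mathcal{C}$ since it is constant, hence continuously differentiable, in $t$). Applying $\mathbb{B}^N_\theta$ and using $\sum_{e\in\mathbb{H}_v}\Theta^N_{k,\theta}(t,v,z,e)=1$ from Lemma \ref{lemma_usefulproperties}(B) collapses the inner sum, giving
\[ \mathbb{B}^N_\theta \tilde g(t,v,z) = \sum_{k \in \Gamma_2}\rho^N_{k,\theta}(t,v,z)\left(g(v+\zeta^s_k) - g(v)\right). \]
Hence the process
\[ M^N_g(t) = g(V^N_\theta(t)) - \int_0^t \sum_{k \in \Gamma_2}\rho^N_{k,\theta}\left(\tau^N_\theta(s),V^N_\theta(s),Z^N_\theta(s)\right)\left(g(V^N_\theta(s)+\zeta^s_k) - g(V^N_\theta(s))\right)ds \]
is a martingale. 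Comparing with $\hat{\mathbb{A}}_\theta$ in \eqref{finallimitgen} and recalling $\Pi_2\zeta_k=\zeta^s_k$, it remains to replace $\rho^N_{k,\theta}(\tau^N_\theta(s),V^N_\theta(s),Z^N_\theta(s))$ by $\hat{\lambda}_k(V^N_\theta(s),\theta)$ inside the integral as $N\to\infty$.

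This replacement is the crux and the main obstacle, because Lemma \ref{limitinglemma}(A) only provides $\rho^N_{k,\theta}(\cdot,v,z)\to\hat{\lambda}_k(v,\theta)$ in the sense of Condition \ref{regularitycondition} (uniform on $[\epsilon_N,T]$ with $\epsilon_N=1/\sqrt N$, and $L^1$ on $[0,T]$) \emph{separately for each fixed pair} $(v,z)$, whereas in the integral the pair $(V^N_\theta(s),Z^N_\theta(s))$ moves randomly. I would transfer the convergence to the trajectory as follows. Between consecutive jumps the pair $(V^N_\theta,Z^N_\theta)$ is constant while the age $\tau^N_\theta$ runs linearly from $0$, so on each inter-jump interval the age plays exactly the role of the time variable in Condition \ref{regularitycondition}. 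By the compact containment above, for any $\delta>0$ there is a finite set $K\subset\hat{\mathcal{S}}$ such that, uniformly in $N$, the trajectory up to time $t$ stays in $K$ with probability at least $1-\delta$; since $K$ is finite, both the uniform supremum bound and the $L^1$ bound of Condition \ref{regularitycondition} hold uniformly over the finitely many states in $K$. The only remaining contribution is the boundary layer $\{\tau^N_\theta(s)<\epsilon_N\}$ just after each jump, whose total length is $O(\epsilon_N)$ in expectation because the expected number of jumps of $V^N_\theta$ on $[0,t]$ is bounded uniformly in $N$ by the rate estimates; the $L^1$ part of Condition \ref{regularitycondition}, now uniform over $K$, makes this contribution vanish. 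Consequently, for bounded $h$,
\[ \int_0^t \rho^N_{k,\theta}\left(\tau^N_\theta(s),V^N_\theta(s),Z^N_\theta(s)\right)h(V^N_\theta(s))\,ds - \int_0^t \hat{\lambda}_k(V^N_\theta(s),\theta)\,h(V^N_\theta(s))\,ds \longrightarrow 0 \]
in probability.

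Passing to a convergent subsequence $W^N_\theta\Rightarrow W_\theta$ (invoking the Skorohod representation to handle continuity points and the moment bounds to provide the uniform integrability needed to keep the martingale relation in the limit), the martingale $M^N_g$ converges to $g(V_\theta(t))-\int_0^t\hat{\mathbb{A}}_\theta g(V_\theta(s))\,ds$, where $V_\theta=\Pi_{\hat{\mathcal{S}}}W_\theta$. This shows $V_\theta$ solves the martingale problem for $\hat{\mathbb{A}}_\theta$. Since $\{\hat{\lambda}_k:k\in\Gamma_2\}$ satisfy part (B) of Assumption \ref{assumptions1}, Lemma \ref{lemma1:app} guarantees that this martingale problem is well-posed, and as both $V_\theta$ and $\hat{X}_\theta$ start at $v_0$, uniqueness identifies $\Pi_{\hat{\mathcal{S}}}W_\theta$ with $\hat{X}_\theta$ in distribution, completing the proof.
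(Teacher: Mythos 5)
Your proof follows essentially the same route as the paper's: tightness via compact containment and moment bounds (which the paper delegates to Lemma \ref{mainlemmapp}), lifting a bounded $g$ on $\Pi_2\mathcal{S}$ to $\tilde{g}(t,v,z)=g(v)\in\mathcal{C}$, passing the resulting martingales to the limit via Lemma \ref{limitinglemma}, and closing the argument with the well-posedness of the martingale problem for $\hat{\mathbb{A}}_\theta$. Your explicit inter-jump/boundary-layer argument for replacing $\rho^N_{k,\theta}(\tau^N_\theta(s),V^N_\theta(s),Z^N_\theta(s))$ by $\hat{\lambda}_k(V^N_\theta(s),\theta)$ along the random trajectory is a correct and welcome elaboration of a step the paper compresses into a single sentence citing Lemma \ref{limitinglemma}, the continuous mapping theorem and Lemma \ref{mainlemmapp}.
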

\begin{remark}
\label{rem:convergenceofprojection}
Note that this proposition proves that $\Pi_{\hat{ \mathcal{S} }}W^N_\theta \Rightarrow \hat{X}_\theta$ as $N \to \infty$.
\end{remark}
\begin{proof}
The tightness of the sequence of processes $\{ W^N_\theta : N \in \N\}$ is argued in Lemma \ref{mainlemmapp}. Let the process $W_\theta$ be a limit point of this sequence.
For any function $g \in \mathcal{B}( \Pi_2  \mathcal{S}   )$, define another function $f : \hat{ \mathcal{S} }  \to \R$ by
\begin{align*}
f(t,v,z) = g(v).
\end{align*}
Then the function $f$ is in the class $\mathcal{C}$ (see \eqref{classc}) and the action of  $\mathbb{B}^N_\theta$ (see \eqref{genbntheta}) on $f$ is given by
\begin{align*}
\mathbb{B}^N_\theta f(t,v,z) = \sum_{k \in \Gamma_2} \rho^N_{k,\theta}(t,v,z) \left( g(v+ \zeta^s_k) -g(v) \right).
\end{align*}
This shows that the following is a martingale
\begin{align*}
&m^N_g(t) = f(W^N_\theta(t)) -  \sum_{k \in \Gamma_2} \int_{0}^t \rho^N_{k,\theta} ( W^N_\theta(s) ) \left( g(V^N_\theta(s)+ \zeta^s_k) -g( V^N_\theta(s) ) \right)ds \\
& = g\left( \Pi_{\hat{ \mathcal{S} }}W^N_\theta (t) \right) -  \sum_{k \in \Gamma_2} \int_{0}^t \rho^N_{k,\theta} ( W^N_\theta(s) ) \left( g\left(\Pi_{\hat{ \mathcal{S} }}W^N_\theta (s)+ \zeta^s_k\right) -g \left( \Pi_{\hat{ \mathcal{S} }}W^N_\theta (s) \right) \right)ds.
\end{align*}
Since $g$ is bounded, Lemma \ref{limitinglemma}, the continuous mapping theorem and Lemma \ref{mainlemmapp} imply that as $N \to \infty$, we have $m^N_g \Rightarrow m_g$ where
\begin{align*}
m_g(t) =  g\left( \Pi_{\hat{ \mathcal{S} }}W_\theta (t) \right) - \sum_{k \in \Gamma_2} \int_{0}^t \hat{\lambda}_k(  \Pi_{\hat{ \mathcal{S} }}W_\theta (s)  , \theta) \left( g \left( \Pi_{\hat{ \mathcal{S} }}W_\theta (s)+ \zeta^s_k\right) -g\left( \Pi_{\hat{ \mathcal{S} }}W_\theta (s) \right) \right)ds,
\end{align*}
is also a martingale. This shows that $\{ \Pi_{\hat{ \mathcal{S} }}W_\theta (t) : t \geq 0 \}$ satisfies the martingale problem for operator $\hat{ \mathbb{A} }_\theta$ (given by \eqref{finallimitgen}). Moreover $\Pi_{\hat{ \mathcal{S} }}W_\theta (0) = \hat{X}_\theta(0)=v_0 $.
Since the martingale problem for $\hat{ \mathbb{A} }_\theta$ is well-posed, the process $\Pi_{\hat{ \mathcal{S} }}W_\theta $ has the same distribution as the process $\hat{X}_\theta$ and this proves the the proposition.
\end{proof}

\subsection{Proof of Theorem \ref{mainsensitivityresult}} \label{sec:maintheoremproof}

We now have all the tools to prove our main result. But first we need to define some quantities and provide some preliminary results. For any function $f : \hat{\mathcal{S}} \to \R$, $(t_0,v_0,z_0) \in \hat{ \mathcal{S} }$ and $t \geq 0$ define
\begin{align}
\label{defn_psinlim}
\Psi^N_{f ,\theta}(t,t_0,v_0,z_0)= \E\left( f( W^N_\theta(t)  ) \right),
\end{align}
where $\{W^N_\theta(t) : t \geq 0\}$ is the process with generator $\mathbb{B}^N_\theta$ (see \eqref{genbntheta}) and initial state $(t_0,v_0,z_0)$. 
 Similarly for any function $g : \Pi_2 \mathcal{S} \to \R$ define
\begin{align}
\label{defn_psilimit}
\Psi_{g,\theta}(t,v_0)= \E\left( g(  \hat{X}_\theta(t)  ) \right),
\end{align}
where $\{  \hat{X}_\theta(t) : t \geq 0 \}$ is the process with generator $\hat{\mathbb{A} }_\theta $ (see \eqref{finallimitgen}) and initial state $v_0$. 
Now consider a function $f : \mathcal{S} \to \R$ which is polynomially growing with respect to projection $\Pi_2$. Corresponding to this function define $f^N_\theta  : \hat{\mathcal{S}} \to \R$ by \eqref{defn_ftheta} and $f_\theta : \Pi_2 \mathcal{S} \to \R$ by \eqref{defn_fthetalimit}. Remark \ref{rem:functioninclassc} and Lemma \ref{mainlemmapp} imply that for any $T>0$
\begin{align}
\label{mainp:bddnessoff}
\sup_{N \in \N} \sup_{t \in [0,T]} \E\left(  | f^N_\theta( W^N_\theta(t)   )  | \right) < \infty  \quad \textnormal{ and } \quad  \E\left(  \int_0^T | \mathbb{B}^N_\theta f^N_\theta( W^N_\theta(t)   )  | dt  \right) < \infty. 
\end{align}
If $\sigma$ is a stopping time with respect to the filtration generated by $W^N_\theta$, then due to part (E) of Lemma \ref{mainlemmapp} we have
\begin{align}
\label{mainpr:dynkinappl}
\E\left(  \int_{ 0 }^{\sigma \wedge t} \mathbb{B}^N_\theta f(W^N_\theta(s)   ) ds  \right) = \E\left( \Psi^N_{f ,\theta}(\sigma \wedge t,t_0,v_0,z_0) \right) -f(t_0,v_0,z_0).
\end{align}
Proposition \ref{prop_limitresult} shows that the sequence of processes $\{W^N_\theta : N \in \N\}$ is tight and $\Pi_{\hat{ \mathcal{S} }} W^N_\theta \Rightarrow \hat{X}_\theta$ as $N \to \infty$ (see Remark \ref{rem:convergenceofprojection}). This fact along with part (C) of Lemma  \ref{limitinglemma}  proves that for any $T >0$
\begin{align}
\label{mainproof:limitpsi}
\lim_{N \to \infty} \sup_{ t \in [\epsilon_N,T]} \left|\Psi^N_{f^N_\theta ,\theta}(t,t_0,v_0,z_0) - \Psi_{f_\theta ,\theta}(t,v_0) \right| =0 \ \textnormal{and} \
\lim_{N \to \infty} \int_{0}^T \left|\Psi^N_{f^N_\theta ,\theta}(t,t_0,v_0,z_0) - \Psi_{f_\theta ,\theta}(t,v_0) \right|dt =0
,
\end{align}
where $\epsilon_N = 1/\sqrt{N}$.

Observe that the right side of \eqref{mainresultequation} can be written as 
\begin{align*}
\hat{S}_{\theta}(f_\theta,t) = \frac{ \partial }{\partial \theta} \E\left( f_\theta( \hat{X}_\theta(t)   )  \right) = \lim_{h \to 0} \frac{\E\left( f_{\theta+h}( \hat{X}_{\theta +h}(t)   )  \right) -\E\left( f_\theta( \hat{X}_\theta(t)   )  \right)    }{h},
\end{align*}
 where $\hat{X}_\theta$ and $\hat{X}_{\theta+h}$ are processes with initial state $v_0 = \Pi_2 x_0$ and generators $\hat{\mathbb{A} }_\theta $ and $\hat{\mathbb{A} }_{\theta+h} $ respectively.  This shows that we can write $\hat{S}_{\theta}(f_\theta,t) $ as
 \begin{align}
 \label{proof:simplrhs}
\hat{S}_{\theta}(f_\theta,t)  &= \lim_{h \to 0} \frac{\E\left( f_{\theta+h}( \hat{X}_{\theta +h }(t)   )  \right) -\E\left( f_\theta( \hat{X}_{\theta + h}(t)   )  \right)    }{h} + \lim_{h \to 0} \frac{\E\left( f_{\theta}( \hat{X}_{\theta +h}(t)   )  \right) -\E\left( f_\theta( \hat{X}_\theta(t)   )  \right)    }{h},
\end{align}
provided that the two limits exist. If $\partial f_\theta/ \partial \theta$ is the partial derivative of $f_\theta$ with respect to $\theta$, then for any $v \in \Pi_2 \mathcal{S}$
\begin{align*}
f_{\theta+h}(v) = f_{\theta}(v) + h \frac{\partial f_\theta}{   \partial \theta  } (v) +o(h).
\end{align*}
This shows that the first limit in \eqref{proof:simplrhs} is just
\begin{align}
 \label{proof:simplrhs1}
 \lim_{h \to 0} \frac{\E\left( f_{\theta+h}( \hat{X}_{\theta +h }(t)   )  \right) -\E\left( f_\theta( \hat{X}_{\theta + h}(t)   )  \right)    }{h}  = \E\left(\frac{\partial f_\theta}{   \partial \theta  } ( \hat{X}_{\theta }(t)   )   \right) .
\end{align} 
Using coupling arguments we proved in \cite{Gupta} that the second limit in \eqref{proof:simplrhs} is given by
\begin{align}
 \label{proof:simplrhs2}
 \lim_{h \to 0} \frac{\E\left( f_{\theta}( \hat{X}_{\theta +h}(t)   )  \right) -\E\left( f_\theta( \hat{X}_\theta(t)   )  \right)    }{h} & =  \sum_{k \in \Gamma_2}    \E \left[ \int_{0}^{t }  \frac{\partial  \hat{\lambda}_k(  \hat{X}_\theta(s)  ,\theta )   }{ \partial \theta} 
\left(  f_\theta( \hat{X}_\theta(s) + \zeta^s_k)  -  f_\theta(\hat{X}_\theta(s)) \right)ds \right] \notag  \\
& + \sum_{k \in \Gamma_2}    \E  \left[ \sum_{i = 0, \sigma_{i} <t  }^{\infty} \frac{ \partial \hat{\lambda}_{k}(\hat{X}_\theta(\sigma_{i}  ) ,\theta)  }{ \partial \theta }  R_{k,\theta}( \hat{X}_\theta(\sigma_{i}  ), f_\theta,t - \sigma_i \wedge t,k)   \right].
\end{align}
where $\zeta^s_k = \Pi_2 \zeta_k$, $\sigma_i$ is the $i$-th jump time\footnote{We define $\sigma_{0}=0$ for convenience} of the process $\hat{X}_\theta$ and 
\begin{align}
\label{mainp:defnrktheta}
R_{k,\theta}( x, f,t,k) = \int_{0}^{t}  \left( \Psi_{ f, \theta}(s, x +\zeta^s_k ) -  \Psi_{ f, \theta}(s, x ) - f(x+\zeta^s_k) + f(x) \right)\exp \left(-\hat{\lambda}_0( x,\theta)(t-s)  \right)ds. 
\end{align}
 From \eqref{proof:simplrhs1}, \eqref{proof:simplrhs2}, \eqref{proof:simplrhs} and \eqref{mainresultequation} we see that to prove Theorem \ref{mainsensitivityresult} is suffices to show that
\begin{align}
\label{mainproof:sufficientcondiytion0}
\lim_{N \to \infty} \frac{\partial  }{ \partial \theta } \E \left(  f( X^{N}_{\gamma_2, \theta} (t) ) \right)  & = \E\left(\frac{\partial f_\theta}{   \partial \theta  } ( \hat{X}_{\theta }(t)   )   \right) +  \sum_{k \in \Gamma_2}    \E \left[ \int_{0}^{t }  \frac{\partial  \hat{\lambda}_k(  \hat{X}_\theta(s)  ,\theta )   }{ \partial \theta} 
\left(  f_\theta( \hat{X}_\theta(s) + \zeta^s_k)  -  f_\theta(\hat{X}_\theta(s)) \right)ds \right] \notag  \\
& +\sum_{k \in \Gamma_2}    \E  \left[ \sum_{i = 0, \sigma_{i} <t  }^{\infty}   \frac{ \partial \hat{\lambda}_{k}(\hat{X}_\theta(\sigma_{i}  ) ,\theta)  }{ \partial \theta }  R_{k,\theta}( \hat{X}_\theta(\sigma_{i}  ), f_\theta,t - \sigma_i \wedge t,k)   \right].
\end{align}
We now come to the proof of our main result, where we establish \eqref{mainproof:sufficientcondiytion0}. The arguments used in the proof are motivated by the analysis in \cite{Gupta}.

 \begin{proof}[Proof of Theorem \ref{mainsensitivityresult}] 
 For the initial state $x_0$ let $v_0 = \Pi_2 x_0$ and $z_0 = (I- \Pi_2) x_0$. Let $X^{N}_{\theta}$ and $ X^{N}_{\theta+h}$ be Markov processes with initial state $x_0$ and generators  $\mathbb{A}^{N}_{\gamma_2, \theta}$ and $\mathbb{A}^{N}_{\gamma_2, \theta+h}$ respectively. Similarly let $W^N_\theta$ and $W^{N}_{\theta+h}$ be Markov processes with initial state $(0,v_0,z_0)$ and generators $\mathbb{B}^{N}_\theta$ and $\mathbb{B}^{N}_{\theta+h}$ respectively. 
From part (B) of Proposition \ref{prop_preserve} we know that
\begin{align}
\label{mainproofequiv}
\E\left( f( X^N_\theta (t) ) \right) & = \E\left( f_\theta^N( W^N_\theta(t)  ) \right)  \ \textnormal{ and } \ \E\left( f( X^N_{\theta+h} (t) ) \right) = \E\left( f_{\theta + h}^N( W^N_{\theta+h }(t)   ) \right). 
\end{align}
For any $(t,v,z) \in \hat{ \mathcal{S}}$, $f^N_\theta(t,v,z)$ is a continuously differentiable function of $\theta$. Hence we can write 
\begin{align*}
f^N_{\theta + h}(t,v,z) = f^N_{\theta}(t,v,z) + h \frac{\partial f^N_\theta }{ \partial \theta }(t,v,z)  + o(h).
\end{align*}
This expansion along with \eqref{mainproofequiv} gives us
\begin{align}
\label{snftheta_expn1}
S^N_\theta(f,t) &= \frac{\partial  }{ \partial \theta } \E \left(  f( X^{N}_{\gamma_2, \theta} (t) ) \right) \notag \\
 & = \lim_{h \to 0} \frac{ \E\left( f( X^N_{\theta+h} (t) ) \right) -  \E\left( f( X^N_\theta (t) ) \right)   }{h}  \notag \\
& = \lim_{h \to 0} \frac{ \E\left( f_{\theta + h}^N( W^N_{\theta +h}  (t)   ) \right) -  \E\left( f_\theta^N( W^N_{\theta}  (t)  ) \right)  }{h} \notag \\
& = \lim_{h \to 0} \frac{ \E\left( f_{\theta + h}^N( W^N_{\theta +h}  (t)   ) \right) -  \E\left( f_\theta^N( W^N_{\theta+h}  (t)  ) \right)  }{h} 
+ \lim_{h \to 0} \frac{ \E\left( f_{\theta}^N( W^N_{\theta +h}  (t)   ) \right) -  \E\left( f_\theta^N( W^N_{\theta}  (t)  ) \right)  }{h} \notag \\
& = S^{N,1}_\theta(f,t) + S^{N,2}_\theta(f,t), 
\end{align}
where
\begin{align}
\label{sn1ftheta}
S^{N,1}_\theta(f,t) & = \E \left( \frac{ \partial  f^N_\theta    }{ \partial \theta} (W^N_{\theta}  (t)  )  \right) \\ 
\textnormal{ and } \quad  S^{N,2}_\theta(f,t) & =  \lim_{h \to 0} \frac{ \E\left( f_{\theta}^N(  W^N_{\theta+h}  (t)    ) \right) -  
\E\left( f_\theta^N( W^N_{\theta+h}  (t)  ) \right)  }{h}. 
\end{align}
Proposition \ref{prop_limitresult} shows that the sequence of processes $\{W^N_\theta : N \in \N \}$ is tight and if $W_\theta$ is a limit point then the process $\Pi_{ \hat{ \mathcal{S} }} W_\theta$ has the same distribution as the process 
$\hat{X}_\theta$. This fact along with part (C) of Lemma \ref{limitinglemma} shows that for $t >0$
\begin{align}
\label{snithetalimit}
\lim_{N \to \infty} S^{N,1}_\theta(f,t)  = \E \left( \frac{ \partial f_\theta  }{ \partial \theta}  ( \hat{X}_\theta(t)  )  \right).
\end{align}

In order to compute the limit of $S^{N,2}_\theta(f,t)$ as $N \to \infty$, we will couple the processes $W^N_\theta$ and $W^N_{\theta+h}$ in a special way. We need to define certain quantities to describe the coupling. 
For any $(t_1,v_1,z_1), (t_2,v_2,z_2)  \in \hat{ \mathcal{S}}$ let
 \begin{align*}
\rho^N_{k , \theta,\textnormal{min}} (t_1,v_1,z_1,t_2,v_2,z_2,h) & = \rho^N_{k,\theta } (t_1,v_1,z_1) \wedge \rho^N_{k,\theta+h} (t_2,v_2,z_2), \\
r^{N,1}_{k,\theta } (t_1,v_1,z_1,t_2,v_2,z_2,h) & =\rho^N_{k,\theta}  (t_1,v_1,z_1) -  \rho^N_{k , \theta,\textnormal{min}} (t_1,v_1,z_1,t_2,v_2,z_2,h)  \\
\textnormal{ and } \quad  r^{N,2}_{k,\theta} (t_1,v_1,z_1,t_2,v_2,z_2,h)   &=\rho^N_{k ,\theta+h } (t_2,v_2,z_2)-  \rho^N_{k , \theta,\textnormal{min}} (t_1,v_1,z_1,t_2,v_2,z_2,h).
\end{align*}
We define the processes $V^N_{\theta}$ and $V^N_{\theta+h}$ by the following 
random time change representations 
\begin{align}
\label{mainproof:rtcrep1}
V^N_\theta(t) &= v_0 + \sum_{k \in \Gamma_2} Y_k \left(  \int_{0}^t \rho^N_{k , \theta, \textnormal{min}} \left( \tau^N_\theta(s), V^N_\theta(s), Z^N_\theta(s) , \tau^N_{\theta +h}(s), V^N_{\theta +h }(s), Z^N_{\theta +h}(s), h \right) ds \right)  \zeta^s_k \notag  \\
& +  \sum_{k \in \Gamma_2} Y^{(1)}_k \left(  \int_{0}^t r^{N ,1}_{k,\theta} \left( \tau^N_\theta(s), V^N_\theta(s), Z^N_\theta(s) , \tau^N_{\theta +h}(s), V^N_{\theta +h }(s), Z^N_{\theta +h}(s), h \right)  ds \right) \zeta^s_k  \\
\label{mainproof:rtcrep2}
V^N_{\theta+h}(t) &= v_0 + \sum_{k \in \Gamma_2} Y_k \left(  \int_{0}^t \rho^N_{k , \theta, \textnormal{min}} \left( \tau^N_\theta(s), V^N_\theta(s), Z^N_\theta(s) , \tau^N_{\theta +h}(s), V^N_{\theta +h }(s), Z^N_{\theta +h}(s), h \right)ds \right) \zeta^s_k \notag \\
& +  \sum_{k \in \Gamma_2} Y^{(2)}_k \left(  \int_{0}^t r^{N ,2}_{k,\theta} \left( \tau^N_\theta(s), V^N_\theta(s), Z^N_\theta(s) , \tau^N_{\theta +h}(s), V^N_{\theta +h }(s), Z^N_{\theta +h}(s), h \right) ds \right) \zeta^s_k,
\end{align}
where $\{  Y_k,  Y^{(1)}_k ,Y^{(2)}_k   : k \in \Gamma_2 \}$ is a family of independent unit rate Poisson processes. To $V^N_{\theta}$ ($V^N_{\theta+h}$) we associate processes $\tau^N_\theta$ ($\tau^N_{\theta+h}$)  and $Z^N_\theta$ ($Z^N_{\theta+h}$)  as in Remark \ref{rem:construction}. The above representations couple the processes $V^N_{\theta}$ and $V^N_{\theta+h}$. For each $i\in \N$, let $\sigma_{i}^1$ ($\sigma_{i}^{2}$) be the $i$-th jump time of the process $V^N_\theta$ ($V^N_{\theta+h}$ ) and let $\eta_{i}^{1}$ ($\eta_{i}^{2}$) be the jump direction of the process $V^N_\theta$ ($V^N_{\theta+h}$) at time $\sigma_{i}^{1}$ ($\sigma_{i}^{2}$). Define $\sigma^1_0 = \sigma^2_0 = 0$.
Fix a sequence $\{ u_i : i \in \N \}$ of independent $\textnormal{Unif}(0,1)$ random numbers. 
We couple the processes $Z^N_\theta$ and $Z^N_{\theta+h}$, by letting $Z^N_\theta(\sigma_{i}^{1}  ) = \digamma^N_{\eta_{i}^{1},\theta}( \sigma_{i}^{1} -\sigma_{i-1}^{1} ,  V^N_\theta( \sigma_{i-1}^{1})  , Z^N_\theta( \sigma_{i-1}^{1} ) ,u_i)$ and $Z^N_{\theta+h}(\sigma^{2}_i) = \digamma^N_{\eta^2_i,\theta +h}( \sigma^{2}_i -\sigma^{2}_{i-1} ,  V^N_{\theta+h}( \sigma^{2}_{i-1})  , Z^N_{\theta+h}( \sigma^{2}_{i-1} ) ,u_i)$ for each $i$, where the function $\digamma^N$ is defined by \eqref{defndigamma}. Note that we are using the same $u_i$ in the definition of $Z^N_\theta(\sigma_{i}^{1}  ) $ and $Z^N_{\theta+h}(\sigma^{2}_i) $. 
Define $W^N_{\theta}$ and $W^N_{\theta+h}$ by 
\begin{align*}
W^N_{\theta}(t) = \left(  \tau^N_{\theta}(t) ,  V^N_{\theta}(t) ,  Z^N_{\theta}(t)\right)  \quad \textnormal{ and } \quad  W^N_{\theta+h}(t) = \left(  \tau^N_{\theta+h}(t) ,  V^N_{\theta +h}(t) ,  Z^N_{\theta+h}(t)\right) \textnormal{ for all } t \geq 0.
\end{align*}
One can verify that the processes $W^N_\theta$ and $W^N_{\theta+h}$ have initial state $(0,v_0,z_0)$ and generators $\mathbb{B}^N_\theta$ and $\mathbb{B}^N_{\theta+h}$ respectively. 

Let $\gamma^N_h$ be the stopping time given by
\begin{align}
\label{defn_gamnh}
\gamma^N_h = \inf \{ t \geq 0 : W^N_\theta(t) \neq W^N_{\theta+h}(t)   \}.
\end{align}
Then the coupling of processes $W^N_\theta$ and $ W^N_{\theta+h}$ ensures that $\gamma^N_h \to \infty$ a.s. as $h \to 0$. Define
\begin{align}
\label{defnantheta}
A^N_\theta & = \lim_{h \to 0} \frac{1}{h} \E\left[  \int_{0}^{t \wedge \gamma^N_h}  \left( \mathbb{B}^N_{\theta+h} f_{\theta}^N( W^N_{\theta+h }(s) )  -  \mathbb{B}^N_{\theta} f_\theta^N( W^N_\theta(s)  )  \right)  ds \right] \\
\textnormal{ and } \quad
\label{defnbntheta}
B^N_\theta  & = \lim_{h \to 0} \frac{1}{h} \E\left[  \int_{t \wedge \gamma^N_h}^t  \left( \mathbb{B}^N_{\theta+h}  f_{\theta}^N( W^N_{\theta+h }(s) )   -  \mathbb{B}^N_{\theta}  f_{\theta}^N( W^N_{\theta }(s) )  \right)  ds \right].  
\end{align}
Note that $f^N_\theta(0,v_0,z_0) = f(x_0)$. Using \eqref{mainpr:dynkinappl} we can write
\begin{align*}
\E\left( f_\theta^N( W^N_\theta(t)  ) \right) & =f(x_0)+ \E \left( \int_{0}^t \mathbb{B}^N_\theta f^N_\theta ( W^N_{\theta  }(s) )  ds \right) \\
\textnormal{ and } \quad  \E\left( f_{\theta }^N( W^N_{\theta +h }(t)  ) \right) & =f(x_0) + \E \left( \int_{0}^t \mathbb{B}^N_{\theta +h} f^N_{\theta }( W^N_{\theta +h }(s)  ) ds \right).
\end{align*}
Therefore
\begin{align}
\label{defnsnitheraf}
S^{N,2}_\theta(f,t) & = \lim_{h \to 0} \frac{ \E\left( f_{\theta}^N( W^N_{\theta+h }(t)   ) \right) -  \E\left( f_\theta^N( W^N_\theta(t)  ) \right)  }{h} \notag \\
& =   \lim_{h \to 0} \frac{1}{h} \left[ \E\left(  \int_{0}^t  \left( \mathbb{B}^N_{\theta+h} f_{\theta}^N( W^N_{\theta+h }(s)   ) -   \mathbb{B}^N_{\theta} f_\theta^N( W^N_\theta(s) ) \right) ds \right) \right]  \notag \\
& = A^N_\theta  + B^N_\theta.
\end{align}
Using Taylor's expansion, for any $f \in \mathcal{C}$ and $(t,v,z) \in  \hat{ \mathcal{S}}$ we get
\begin{align}
\label{diffinbntheta}
& \mathbb{B}^N_{\theta +h  }f(t,v,z) - \mathbb{B}^N_{\theta }f(t,v,z)  \notag  \\
& =  \sum_{k \in \Gamma_2} \rho^N_{k,\theta+h}(t,v,z) 
  \sum_{e \in \mathbb{H}_v } \left( f(0,v+ \zeta^s_k, e+ \zeta^f_k ) -f(t,v,z) \right)  \Theta^N_{k,\theta+h}(t,v,z,e) \notag  \\
  & - \sum_{k \in \Gamma_2} \rho^N_{k,\theta}(t,v,z) 
  \sum_{e \in \mathbb{H}_v } \left( f(0,v+ \zeta^s_k, e+ \zeta^f_k ) -f(t,v,z) \right)  \Theta^N_{k,\theta}(t,v,z,e)\notag  \\
  & = \sum_{k \in \Gamma_2}  \sum_{e \in \mathbb{H}_v } f(0,v+ \zeta^s_k, e+ \zeta^f_k )  \left( \rho^N_{k,\theta+h} (t,v,z)   \Theta^N_{k,\theta+h}(t,v,z,e)  - 
  \rho^N_{k,\theta}(t,v,z) \Theta^N_{k,\theta}(t,v,z,e)  \right) \notag  \\
  & -  \sum_{k \in \Gamma_2}f(t,v,z) \left(  \rho^N_{k ,\theta+h} (t,v,z) -\rho^N_{k,\theta }(t,v,z)   \right)  \notag  \\
  & = \sum_{k \in \Gamma_2}  \sum_{ e \in \mathbb{H}_v }  f(0,v+ \zeta^s_k, e+ \zeta^f_k )  \left(   \frac{   \partial \rho^N_{k,\theta }(t,v,z)  }{ \partial \theta} \Theta^N_{k,\theta}(t,v,z,e)    +  \rho^N_{k,\theta}(t,v,z)  \frac{\partial  \Theta^N_{k,\theta} (t,v,z,e)    }{ \partial \theta} \right) h \notag  \\
  & -  \sum_{k \in \Gamma_2}f(t,v,z)  \frac{ \partial  \rho^N_{k,\theta}(t,v,z)  }{ \partial \theta} h +o(h)  \notag  \\
    & =   \sum_{k \in \Gamma_2}  \frac{\partial \rho^N_{k,\theta} (t,v,z)  }{ \partial \theta}\left(   \sum_{e \in \mathbb{H}_v } f(0,v+ \zeta^s_k, e+ \zeta^f_k )\Theta^N_{k,\theta}(t,v,z,e)  - f(t,v,z) \right)   h \notag   \\
    & + \sum_{k \in \Gamma_2} \rho^N_{k,\theta} (t,v,z) \sum_{e \in \mathbb{H}_v }   f(0,v+ \zeta^s_k, e + \zeta^f_k )  \frac{\partial  \Theta^N_{k,\theta}(t,v,z,e)    }{ \partial \theta}  h +o(h).
\end{align}
Note that for any $t \in [0, \gamma^N_h)$ we have $W^N_{\theta+h }(t) = W^N_{\theta}(t)$. Relation \eqref{diffinbntheta} implies that
\begin{align*}
&\lim_{N \to \infty} A^N_\theta \\ & = \lim_{N \to \infty} \lim_{h \to 0} \frac{1}{h} \E\left[  \int_{0}^{t \wedge \gamma^N_h}  \left( \mathbb{B}^N_{\theta+h} f_{\theta}^N( W^N_{\theta+h }(s) )  -  \mathbb{B}^N_{\theta} f_\theta^N( W^N_\theta(s)  )  \right)  ds \right] \\
& = \lim_{N \to \infty} \sum_{k \in \Gamma_2 } \E \left[ \int_{0}^t \frac{\partial \rho^N_{k,\theta} (W^N_\theta(s) )  }{ \partial \theta}\left(   \sum_{e \in \mathbb{H}_v } f_{\theta}^N(0,\Pi_{\hat{ \mathcal{S} }} W^N_\theta(s)+ \zeta^s_k, e+ \zeta^f_k )\Theta^N_{k,\theta}(W^N_\theta(s) ,e)  - f_{\theta}^N(W^N_\theta(s) ) \right) ds \right] \\
&+ \lim_{N \to \infty} \sum_{k \in \Gamma_2 } \E\left[ \int_{0}^t \rho^N_{k,\theta} (W^N_\theta(s)) \sum_{e \in \mathbb{H}_v }   f_{\theta}^N(0, \Pi_{\hat{ \mathcal{S} }} W^N_\theta(s)+ \zeta^s_k, e + \zeta^f_k )  \frac{\partial  \Theta^N_{k,\theta}(W^N_\theta(s),e)    }{ \partial \theta} ds   \right].
\end{align*}
Proposition \ref{prop_limitresult} shows that the sequence of processes $\{W^N_\theta : N \in \N \}$ is tight and if $W_\theta$ is a limit point then the process $\Pi_{ \hat{ \mathcal{S} }} W_\theta$ has the same distribution as the process $\hat{X}_\theta$. This fact along with Lemma  \ref{limitinglemma}  implies that  
\begin{align}
\label{limitofantheta}
\lim_{N \to \infty} A^N_\theta & =
 \sum_{k \in \Gamma_2}    \E \left[ \int_{0}^{t }  \frac{\partial  \hat{\lambda}_k(  \hat{X}_\theta(s)  ,\theta )   }{ \partial \theta} 
\left(  f_\theta( \hat{X}_\theta(s) + \zeta^s_k)  -  f_\theta(\hat{X}_\theta(s)) \right)ds \right].
\end{align}
    
Our next goal is to compute $\lim_{N \to \infty} B^N_\theta$. Recall the definitions of $\Psi^N_{f ,\theta}$ and $\Psi_{f ,\theta}$ from \eqref{defn_psinlim} and \eqref{defn_psilimit} respectively. For $i=1,2$, let $(t_i,v_i,z_i) \in \hat{\mathcal{S}}$. Define an event
\begin{align}
\label{mainproof:maineventusedfortheproof}
E^N(t_1,v_1,z_1,t_2,v_2,z_2,s) = \{ W^N_{\theta}(\gamma^N_h ) = (t_1,v_1,z_1), W^N_{\theta+h}(\gamma^N_h ) =(t_2,v_2,z_2) \textnormal{ and } \gamma^N_h = s  \}
\end{align}
and let
\begin{align}
\label{mainp:defrnknew}
&R^N_{\theta, h }(t_1,v_1,z_1,t_2,v_2,z_2,s,t)  \notag \\
&=  \E\left[  \int_{t \wedge \gamma^N_h }^t  \left( \mathbb{B}^N_{\theta+h} f_{\theta}^N( W^N_{\theta+h }(u)  )  -  \mathbb{B}^N_{\theta} f_\theta^N( W^N_\theta(u)  )  \right)  du  \middle\vert E^N(t_1,v_1,z_1,t_2,v_2,z_2,s) \right].
\end{align}
Let $\epsilon_N = 1/\sqrt{N}$.
From \eqref{mainpr:dynkinappl} and the strong Markov property, we can deduce that for any $0<s < t$
\begin{align}
\label{limitwithdynkin1}
&\lim_{N \to \infty} \lim_{h \to 0} R^N_{\theta, h }(t_1,v_1,z_1,t_2,v_2,z_2,s,t)  \notag \\
&= \lim_{N \to \infty} \lim_{h \to 0} \E\left[  \int_{t \wedge \gamma^N_h }^t  \left( \mathbb{B}^N_{\theta+h} f_{\theta}^N( W^N_{\theta+h }(u)  )  -  \mathbb{B}^N_{\theta} f_\theta^N( W^N_\theta(u)  )  \right)  du  \middle\vert E^N(t_1,v_1,z_1,t_2,v_2,z_2,s) \right] \notag \\
& = \lim_{N \to \infty} \lim_{h \to 0} \E\left[  \int_{s + \epsilon_N }^t  \left( \mathbb{B}^N_{\theta+h} f_{\theta}^N( W^N_{\theta+h }(u)  )  -  \mathbb{B}^N_{\theta} f_\theta^N( W^N_\theta(u)  )  \right)  du  \middle\vert E^N(t_1,v_1,z_1,t_2,v_2,z_2,s) \right] \notag \\
& = \lim_{N \to \infty} \lim_{h \to 0} \left[  \Psi^N_{f^N_\theta,\theta+h}( t - s  ,t_2, v_2,z_2 ) -  \Psi^N_{f^N_\theta,\theta+h}(\epsilon_N ,t_2, v_2,z_2 )- \Psi^N_{f^N_\theta,\theta}( t - s, t_1,v_1,z_1 ) + \Psi^N_{f^N_\theta,\theta}( \epsilon_N, t_1,v_1,z_1 )  \right]  \notag \\
& =  \Psi_{f_\theta ,\theta}(t-s,v_2) -\Psi_{f_\theta ,\theta}(t-s,v_1) - f_\theta(v_2) + f_\theta(v_1), 
\end{align}
where the last equality is due to \eqref{mainproof:limitpsi}.

Recall the random time change representations \eqref{mainproof:rtcrep1} and \eqref{mainproof:rtcrep2}. 
For each $i \in \N$, let $\sigma^N_i$ be the $i$-th jump time of the process $C^N_\theta$ defined by
\begin{align*}
C^N_\theta(t) = \sum_{k \in \Gamma_2} Y_k \left(  \int_{0}^{t} \rho^N_{k , \theta, \textnormal{min}} \left( \tau^N_\theta(s),V^N_\theta(s), Z^N_\theta(s) ,\tau^N_{\theta +h}(s), V^N_{\theta +h }(s), Z^N_{\theta +h}(s),   h \right) ds \right)  \zeta^s_k. 
\end{align*}      
Set $\sigma^N_0 = 0$ and note that $\gamma^N_h > \sigma^N_0$.  For each $i \in \N$ define
\begin{align*}
B^{N,1}_{\theta ,i } & = \lim_{h \to 0} \frac{1}{h} \E\left[  \ind_{ \{ \sigma^N_i = \gamma^N_h  \} }\int_{t \wedge \gamma^N_h }^t  \left( \mathbb{B}^N_{\theta+h} f_{\theta}^N( W^N_{\theta+h }(s) )  -  \mathbb{B}^N_{\theta} f_\theta^N( W^N_\theta(s)   )  \right)  ds \right]  \\
\textnormal{ and } \quad  B^{N,2}_{\theta ,i } & = \lim_{h \to 0} \frac{1}{h} \E\left[  \ind_{ \{ \sigma^N_{i-1} < \gamma^N_h < \sigma^N_{i}  \} }\int_{t \wedge \gamma^N_h  }^t  \left( \mathbb{B}^N_{\theta+h} f_{\theta}^N( W^N_{\theta+h }(s) )  -  \mathbb{B}^N_{\theta} f_\theta^N( W^N_\theta(s)  )  \right)  ds \right] . 
\end{align*}
Since $\ind_{ \{ \sigma^N_{i-1} \leq \gamma^N_h  < \sigma^N_{i}   \} } = \ind_{ \{ \sigma^N_{i-1}  = \gamma^N_h    \} }+ \ind_{ \{ \sigma^N_{i-1}  < \gamma^N_h < \sigma^N_{i}   \} } $ we can write
\begin{align}
\label{splittingofbntheta}
B^N_\theta & = \lim_{h \to 0} \frac{1}{h} \E\left[  \int_{t \wedge \gamma^N_h}^t  \left( \mathbb{B}^N_{\theta+h} f_{\theta}^N( W^N_{\theta+h }(s)  )  -  \mathbb{B}^N_{\theta} f_\theta^N( W^N_\theta(s) )  \right)  ds \right]  \notag \\
& =\sum_{i=1}^\infty \lim_{h \to 0} \frac{1}{h} \E\left[  \ind_{ \{ \sigma^N_{i-1} \leq \gamma^N_h < \sigma^N_{i}   \} }\int_{t \wedge \gamma^N_h }^t  \left( \mathbb{B}^N_{\theta+h} f_{\theta}^N( W^N_{\theta+h }(s)  )  -  \mathbb{B}^N_{\theta} f_\theta^N( W^N_\theta(s)  )  \right)  ds \right] \notag   \\
& = \sum_{i=1}^\infty ( B^{N,1}_{\theta ,i } +  B^{N,2}_{\theta ,i } ). 
\end{align}

We now show that the term $B^{N,1}_{\theta ,i }$ converges to $0$ as $N \to \infty$. 
Note that the event $\{\sigma^N_i = \gamma^N_h   \}$ occurs if and only if the event
$\{Z^N_\theta(\sigma^N_i  - ) \neq Z^N_{\theta+h}(\sigma^N_i - ), V^N_\theta(\sigma^N_{i-1}   ) = V^N_{\theta+h}(\sigma^N_{i-1} )   ,Z^N_\theta(\sigma^N_{i-1}   ) = Z^N_{\theta+h}(\sigma^N_{i-1} )  \} $ occurs.
Let $\eta^N_i$ be the $\Gamma_2$-valued random variable which gives the direction of the jump in $C^N_\theta$ at time $\sigma^N_i$. Pick a $\delta \geq0$, $v \in \Pi_2 \mathcal{S}$, $z \in \mathbb{H}_v$ and $k \in \Gamma_2$. Define an event 
\begin{align*}
 L_i(\delta,v,z,k)  =\left\{ \gamma^N_h  \geq \sigma^N_i , (\sigma^N_i -\sigma^N_{i-1} ) =\delta , \eta^N_i = k,  V^N_{\theta}( \sigma^N_{i-1} ) =V^N_{\theta+h}( \sigma^N_{i-1} ) =v , Z^N_{\theta}( \sigma^N_{i-1} ) =Z^N_{\theta+h}( \sigma^N_{i-1} ) =z\right\}.
\end{align*} 
Conditioned on this event, $Z^N_\theta(\sigma^N_i  - )  = \digamma^N_{k,\theta}(t,v,z,u_i)  $ and $Z^N_{\theta+h}(\sigma^N_i - )  = \digamma^N_{k,\theta+h}(t,v,z,u_i)$ where the function $\digamma^N_{k,\theta}$ is given by \eqref{defndigamma}. For any distinct $z_1,z_2 \in \mathbb{H}_v$ define
\begin{align*}
G^N_\theta (z_1,z_2 , \delta,v,z,k) = \lim_{h \to 0} \frac{ \P\left(\sigma^N_i = \gamma^N_h ,  Z^N_\theta(\sigma^N_i  - )  =z_1 \textnormal{ and }  Z^N_{\theta+h}(\sigma^N_i  - )  =z_2 \vert   L_i(\delta,v,z,k)   \right) }{h}.
\end{align*}
Lemma \ref{lemma_decoupling} ensures that $G^N_\theta (z_1,z_2 , \delta,v,z,k) $ exists and 
\begin{align*}
G^N_\theta (z_1,z_2 , \delta,v,z,k) \leq \sum_{e \in \mathbb{H}_v  } \left|  \frac{ \partial  \Theta^N_{k,\theta}( \delta,v,z,e) }{ \partial \theta } \right|. 
\end{align*}
Assumptions \ref{assforsensitivityresult} imply that the right hand side is a polynomially growing function with respect to projection $\Pi_{ \hat{ \mathcal{S} }}$ (see Definition \ref{polynomialgrowth}). Given the events $ L_i(\delta,v,z,k)$ and $\{Z^N_\theta(\sigma^N_i  - )  =z_1 ,  Z^N_{\theta+h}(\sigma^N_i  - )  =z_2\}$ we have      
  \begin{align*}
\left( \tau^N_\theta( \gamma^N_h ), V^N_\theta( \gamma^N_h ),Z^N_\theta( \gamma^N_h ) ,  \tau^N_{\theta+h}( \gamma^N_h ), V^N_{\theta +h}( \gamma^N_h ),Z^N_{\theta +h}( \gamma^N_h )\right) = (0,v+ \zeta^s_k, z_1 +\zeta^f_k,0,v+ \zeta^s_k, z_2 +\zeta^f_k ).
\end{align*}
Recall the definition of $R^N_{\theta, h }$ from \eqref{mainp:defrnknew}.
For any $\delta < s < t$ we can write
\begin{align}
\label{mainproof:thisoneis01}
&\lim_{N \to \infty} \lim_{h \to 0} \frac{1}{h} \E\left[  \ind_{ \{ \sigma^N_i = \gamma^N_h  \} }\int_{t \wedge \gamma^N_h }^t  \left( \mathbb{B}^N_{\theta+h} f_{\theta}^N( W^N_{\theta+h }(s)  )  -  \mathbb{B}^N_{\theta} f_\theta^N( W^N_\theta(s) )  \right)  ds \middle\vert  L_i(\delta,v,z, k) ,  \sigma^N_{i-1} =s -\delta \right] \notag \\
& = \lim_{N \to \infty} \lim_{h \to 0} \sum_{ z_1 \neq z_2 \in \mathbb{H}_v  }  G^N_\theta (z_1,z_2 , \delta,v,z,k)  R^N_{\theta, h }( 0,v+ \zeta^s_k, z_1 +\zeta^f_k,0,v+ \zeta^s_k, z_2 +\zeta^f_k,s,t).
\end{align}
Using \eqref{limitwithdynkin1} we see that
\begin{align}
\label{mainproof:thisoneis0}
\lim_{N \to \infty} \lim_{h \to 0}R^N_{\theta, h }( 0,v+ \zeta^s_k, z_1 +\zeta^f_k,0,v+ \zeta^s_k, z_2 +\zeta^f_k,s,t)  = 0.
\end{align}
This relation along with \eqref{mainproof:thisoneis01} implies that
\begin{align}
\label{limitofbn1}
\lim_{N \to \infty} B^{N,1}_{\theta ,i } = 0.
\end{align}

Recall the random time change representations \eqref{mainproof:rtcrep1} and \eqref{mainproof:rtcrep2}. On the event $\{ \sigma^N_{i-1} < \gamma^N_h < \sigma^N_i \}$, the process $V^N_\theta$
(or $V^N_{\theta+h}$) jumps at time $\gamma^N_h$ due to a jump in the Poisson process $Y^{(1)}_k$ (or $Y^{(2)}_k$) for some $k \in \Gamma_2$. Let $\eta$ be the $\Gamma_2$-valued random variable which gives the direction of the jump in $V^N_\theta$ or $V^N_{\theta+h}$ at time $\gamma^N_h$. Define a random variable
\begin{align*}
\alpha^N_i =  ( \sigma^N_{i} -\sigma^N_{i-1} ) \wedge  ( \gamma^N_h -\sigma^N_{i-1} )
\end{align*}
and an event 
\begin{align*}
H_i(s,v,z)  =\left\{ \sigma^N_{i-1}  =s , V^N_{\theta}( \sigma^N_{i-1} ) =V^N_{\theta+h}( \sigma^N_{i-1} ) =v , Z^N_{\theta}( \sigma^N_{i-1} ) =Z^N_{\theta+h}( \sigma^N_{i-1} ) =z\right\},
\end{align*}
for $s \geq 0$, $v \in \Pi_2 \mathcal{S}$ and $z \in \mathbb{H}_v$. The event $\{ \sigma^N_{i-1} < \gamma^N_h < \sigma^N_i \}$ is equivalent to the event $\{ \gamma^N_h > \sigma^N_{i-1},  \alpha^N_i =  ( \gamma^N_h -\sigma^N_{i-1} )\}$.
Given $ \gamma^N_h > \sigma^N_{i-1} $ and $H_i(s,v,z) $, the density of the $\R_+$-valued random variable $\alpha^N_i$ on the event $\{ \eta = k,\alpha^N_i =  ( \gamma^N_h -\sigma^N_{i-1} )\}$ is given by
 \begin{align}
 \label{densityofalpha}
& \lim_{\epsilon \to 0}\frac{ \P\left( \alpha^N_i \in (t,t+\epsilon)  , \eta = k,  \alpha^N_i =  ( \gamma^N_h -\sigma^N_{i-1} )   \middle\vert H_i(s,v,z) , \gamma^N_h > \sigma^N_{i-1}  \right) }{\epsilon} 
\notag \\&= \left( \rho^N_{k,\theta}(t,v,z) +\rho^N_{k,\theta+h}(t, v,z ) - 2 \rho^N_{k,\theta}( t,v,z) \wedge \rho^N_{k,\theta+h}(t,v,z)\right) \notag \\ & \times \exp\left[  -\int_{0}^t  \left( \rho^N_{0,\theta}(u,v,z) +\rho^N_{0,\theta+h}(u,v,z ) - 2 \rho^N_{0,\theta}(u,v,z) \wedge \rho^N_{0,\theta+h}(u,v,z)\right) du\right] \notag \\
&= h \left| \frac{ \partial \rho^N_{k,\theta}(t,v,z)  }{ \partial \theta } \right|\exp\left( -\int_{0}^t  \rho^N_{0,\theta}(u,v,z) du\right) + o(h).
\end{align}
 On the event $H_i(s,v,z)  \cap \{\gamma^N_h > \sigma^N_{i-1}, \eta =k , \alpha^N_i =  ( \gamma^N_h -\sigma^N_{i-1} ) = \delta\}$ we have      
  \begin{align*}
\left( W^N_\theta( \gamma^N_h ), W^N_{\theta +h}( \gamma^N_h )\right) = 
\left\{
\begin{tabular}{cc}
$( \delta, v, z, 0, v + \zeta^s_k ,\xi_2 +\zeta^f_k  )$ & $\rho^N_{k,\theta+h}( \delta,v,z) > \rho^N_{k,\theta}( \delta,v,z)$  \\
$( 0, v + \zeta^s_k ,\xi_1 +\zeta^f_k, \delta, v ,z)$ & $\rho^N_{k,\theta+h}( \delta,v,z) < \rho^N_{k,\theta}( \delta,v,z)$,
\end{tabular} \right.
\end{align*}
where $\xi_1 = \digamma^N_{k,\theta}(\delta,v,z,u_i)$ and $\xi_2=\digamma^N_{k,\theta+h}(\delta,v,z,u_i)$ are $\mathbb{H}_v$-valued random variables with distributions $\Theta^N_{k,\theta}(\delta,v,z,\cdot)$ and $\Theta^N_{k,\theta+h}(\delta,v,z,\cdot)$ respectively. For small values of $h$, $\partial \rho^N_{k,\theta}(\delta,v,z) / \partial \theta  >0$ implies that $\rho^N_{k,\theta+h}( \delta,v,z) > \rho^N_{k,\theta}( \delta,v,z)$ and similarly $\partial \rho^N_{k,\theta}(\delta,v,z) / \partial \theta  < 0$ implies that $\rho^N_{k,\theta+h}( \delta,v,z) < \rho^N_{k,\theta}( \delta,v,z)$. Using the density of $\alpha^N_i$ on the event $\{ \eta = k,\alpha^N_i =  ( \gamma^N_h -\sigma^N_{i-1} )\}$ 
(see \eqref{densityofalpha}) we obtain
\begin{align}
\label{mainproof:mainlimitingrelation007}
&\lim_{N \to \infty} \lim_{h \to 0} \frac{1}{h} \E \left[ \ind_{ \{ \sigma^N_{i-1} < \gamma^N_h < \sigma^N_i \} } \int_{t \wedge \gamma^N_h}^t  \left( \mathbb{B}^N_{\theta+h} f_{\theta}^N( W^N_{\theta+h }(u) )  -  \mathbb{B}^N_{\theta} f_\theta^N( W^N_\theta(u)   )  \right)  du  \middle \vert H_i(s,v,z), \gamma^N_h > \sigma^N_{i-1}\right] \notag \\
&= \lim_{N \to \infty} \lim_{h \to 0} \sum_{z_2 \in  \mathbb{H}_v  }\sum_{k \in \Gamma_2} \int_{0}^{t-s} \left[ \frac{ \partial \rho^N_{k,\theta}(\delta,v,z)  }{ \partial \theta } \right]^{+} \exp{ \left(-\int_{0}^\delta  \rho^N_{0,\theta}(u,v,z) du \right) }  \notag \\ &  \quad \quad  \quad \quad \quad  \quad \quad \quad  \quad \quad \quad  \quad         \times R^N_{\theta,h}( \delta, v, z, 0, v + \zeta^s_k ,z_2 +\zeta^f_k ,s+\delta,t ) \Theta^N_{k,\theta+h}(\delta,v,z,z_2)  d\delta \notag \\
&+\lim_{N \to \infty} \lim_{h \to 0} \sum_{z_1 \in  \mathbb{H}_v  } \sum_{k \in \Gamma_2} \int_{0}^{t-s} \left[ \frac{ \partial \rho^N_{k,\theta}(\delta,v,z)  }{ \partial \theta } \right]^{-}  \exp{ \left(-\int_{0}^\delta  \rho^N_{0,\theta}(u,v,z) du \right) } \notag  \\ &  \quad \quad  \quad \quad \quad  \quad \quad \quad  \quad \quad \quad  \quad         \times
R^N_{\theta,h}(  0, v + \zeta^s_k ,z_1 +\zeta^f_k, \delta, v ,z,s+\delta,t ) \Theta^N_{k,\theta+h}(\delta,v,z,z_1)  d\delta .
\end{align}  
From \eqref{limitwithdynkin1} one can verify that
\begin{align}
\label{mainp:lastrelation1}
&\lim_{N \to \infty} \lim_{h \to 0} R^N_{\theta,h}( \delta, v, z, 0, v + \zeta^s_k ,z_2 +\zeta^f_k, s+\delta,t )   = - \lim_{N \to \infty} \lim_{h \to 0} R^N_{\theta,h}(  0, v + \zeta^s_k ,z_1 +\zeta^f_k, \delta, v ,z,s+\delta,t )   \notag \\
&=  \Psi_{f_\theta ,\theta}(t-s -\delta,v + \zeta^s_k) -\Psi_{f_\theta ,\theta}(t-s-\delta,v) - f_\theta(v + \zeta^s_k) + f_\theta(v).
\end{align}
Using part (A) of Lemma \ref{limitinglemma}, \eqref{mainp:lastrelation1} and \eqref{mainproof:mainlimitingrelation007} we can conclude that
\begin{align}
\label{mainp:lastrelation2}
& \lim_{N \to \infty} \lim_{h \to 0} \frac{1}{h} \E \left[ \ind_{ \{ \sigma^N_{i-1} < \gamma^N_h < \sigma^N_i \} } \int_{t \wedge \gamma^N_h}^t  \left( \mathbb{B}^N_{\theta+h} f_{\theta}^N( W^N_{\theta+h }(u) )  -  \mathbb{B}^N_{\theta} f_\theta^N( W^N_\theta(u)   )  \right)  du  \middle \vert H_i(s,v,z), \gamma^N_h > \sigma^N_{i-1}\right] \notag \\
& = \sum_{k \in \Gamma_2} \int_{0}^{t-s} \frac{ \partial \hat{ \lambda }_k(v,\theta)}{ \partial \theta } \exp\left( -\hat{ \lambda }_0(v,\theta) \delta \right)  \left(  \Psi_{f_\theta ,\theta}(t-s -\delta,v + \zeta^s_k) -\Psi_{f_\theta ,\theta}(t-s-\delta,v)  - f_\theta(v + \zeta^s_k) + f_\theta(v) \right) d\delta \notag  \\
& = \sum_{k \in \Gamma_2} \int_{0}^{t-s} \frac{ \partial \hat{ \lambda }_k(v,\theta)}{ \partial \theta } \exp\left( -\hat{ \lambda }_0(v,\theta) (t -s -u) \right)  \left(  \Psi_{f_\theta ,\theta}( u,v + \zeta^s_k) -\Psi_{f_\theta ,\theta}( u,v)  - f_\theta(v + \zeta^s_k) + f_\theta(v) \right) du,
\end{align}
where $\hat{ \lambda }_0(v,\theta) = \sum_{k \in \Gamma_2} \hat{ \lambda }_k(v,\theta)$. Due to our coupling, as $h \to 0$, the process $W^N_{\theta+h}$ converges a.s. to the process $W^N_\theta$ and hence $\gamma^N_h \to \infty$ a.s. 
Proposition \ref{prop_limitresult} and Remark \ref{rem:convergenceofprojection} show that as $N \to \infty$ we have $V^N_\theta \Rightarrow \hat{X}_\theta$, where $\hat{X}_\theta$ is the limiting process in Theorem \ref{mainsensitivityresult}.
This convergence and \eqref{mainp:lastrelation2} yield the following
\begin{align*}
\lim_{N \to \infty}  B^{N,2}_{\theta,i} & = \lim_{N \to \infty}  \lim_{h \to 0} \frac{1}{h} \E\left[  \ind_{ \{ \sigma^N_{i-1} < \gamma^N_h < \sigma^N_{i}  \} }\int_{t \wedge \gamma^N_h  }^t  \left( \mathbb{B}^N_{\theta+h} f_{\theta}^N( W^N_{\theta+h }(s) )  -  \mathbb{B}^N_{\theta} f_\theta^N( W^N_\theta(s)  )  \right)  ds \right] \\
& =\sum_{k \in \Gamma_2} \E \left[  \frac{ \partial \hat{\lambda}_{k}(\hat{X}_\theta(\sigma_{i-1}  ) ,\theta)  }{ \partial \theta }    R_{k,\theta}( \hat{X}_\theta(\sigma_{i-1}  ), f_\theta,t - \sigma_{i-1} \wedge t,k)  \right],
\end{align*}
where $\sigma_i$ is the $i$-th jump time of the process $\hat{X}_\theta$ (with $\sigma_0 = 0$) and the function $R_{k,\theta}$ be given by \eqref{mainp:defnrktheta}. Note that the quanity on the right hand side is $0$ if $\sigma_{i-1}\geq t$. Using \eqref{splittingofbntheta} and \eqref{limitofbn1} we get
\begin{align*}
\lim_{N \to \infty} B^N_\theta = \sum_{k \in \Gamma_2 }  \E\left[ \sum_{i = 0, \sigma_i <t }^{\infty}  \frac{ \partial \hat{\lambda}_{k}(\hat{X}_\theta(\sigma_{i}  ) ,\theta)  }{ \partial \theta }  R_{k,\theta}( \hat{X}_\theta(\sigma_{i}  ), f_\theta,t - \sigma_i \wedge t,k)   \right].
\end{align*}
This relation along with \eqref{snftheta_expn1}, \eqref{snithetalimit}, \eqref{defnsnitheraf} and \eqref{limitofantheta} gives us
\begin{align*}
\lim_{N \to \infty} S^N_\theta(f,t) & = \E \left( \frac{ \partial f_\theta  }{ \partial \theta}  ( \hat{X}_\theta(t)  )  \right) +  \sum_{k \in \Gamma_2}    \E \left[ \int_{0}^{t }  \frac{\partial  \hat{\lambda}_k(  \hat{X}_\theta(s)  ,\theta )   }{ \partial \theta} 
\left(  f_\theta( \hat{X}_\theta(s) + \zeta^s_k)  -  f_\theta(\hat{X}_\theta(s)) \right)ds \right] \\
& + \sum_{k \in \Gamma_2} \E  \left[  \sum_{i = 0, \sigma_i <t }^{\infty} \frac{ \partial \hat{\lambda}_{k}(\hat{X}_\theta(\sigma_{i}  ) ,\theta)  }{ \partial \theta }  R_{k,\theta}( \hat{X}_\theta(\sigma_{i}  ), f_\theta,t - \sigma_i \wedge t,k)   \right] ,
\end{align*}
which is same as \eqref{mainproof:sufficientcondiytion0} and this completes the proof of the theorem. 
\end{proof}

 \begin{remark}
 \label{rem:extension} 
In proving Theorem \ref{sec:maintheoremproof}, we assumed that the set $\mathbb{H}_v$ is finite for any $v \in \Pi_2 \mathcal{S}$ (see part (A) of Assumption \ref{assumptions2}). This means that if the state of the ``natural" dynamics is $v$ then the ``fast" dynamics is constrained within a compact set $\mathbb{H}_v$. This assumption can be relaxed at the expense of making the proof more technical. The only place where finiteness of $\mathbb{H}_v$ is crucial is in the proof of Proposition \ref{prop_regularity}. As explained in Remark \ref{rem:extensionfinitetocountable}, this proposition can be extended for Markov chains with countable state spaces. Assuming the existence of a suitable Lyapunov function for the fast dynamics, the proof of Theorem \ref{sec:maintheoremproof} goes through with minor modifications.
  \end{remark}

 \section{An Illustrative Example} \label{sec:example}
 
In this section we present a simple example to illustrate how our main result, Theorem \ref{mainsensitivityresult}, can be useful for the estimation of parameter sensitivity for multiscale networks. 
Consider a chemical reaction network with three species $S_1,S_2$ and $S_3$, and three reactions given by
\begin{align*}
S_1 \stackrel{ c_1 }{\longrightarrow} S_2 , \quad S_2  \stackrel{c_2}{\longrightarrow} S_1 \quad 
\textnormal{and} \quad S_2 \stackrel{c_3}{\longrightarrow} S_3.
\end{align*}
The rate constant of the $i$-th reaction is $c_i$, for $i=1,2,3$.
Such a network is used to model the cellular heat-shock response in \cite{HeatShockKhammash}, where $S_1$, $S_2$ and $S_3$ correspond to the $\sigma_{32}-$DnaK complex, the $\sigma_{32}$ heat shock regulator and the $\sigma_{32}$-RNAP complex, respectively. In this example, the first and second reactions are much faster than the third reaction. We assume that the rate constants are given by
\begin{align*}
c_1 = 1, \quad c_2 = 2 \quad \textnormal{and} \quad  c_3 = 5 \times 10^{-4}.
\end{align*}
We choose our sensitive parameter to be $\theta = c_1=1$ and the large \emph{normalization} parameter to be $N_0 = 10^{4}$. The three reactions along with their \emph{scaling} factors ($\beta_k$'s), \emph{propensity} functions ($\lambda_k$'s) and their \emph{stoichiometric} vectors ($\zeta_k$'s) are presented in Table \ref{table:hs}.
\begin{table}[ht]
\caption{Example of Heat Shock Response Model} 
\centering
\begin{tabular}{|c | c | c | c | c | }
\hline
Reaction Number &  Reaction  &  Scaling Factor & Propensity Function & Stoichiometric Vector \\ \hline 
$1$ & $S_1 \longrightarrow S_2$ & $ \beta_1 = 0 $  & $  \lambda_1(x_1,x_2,x_3) = \theta x_1$  & $ \zeta_1 = (-1,1,0)$  \\
$2$ & $S_2 \longrightarrow S_1$ & $ \beta_2 = 0 $  & $  \lambda_2(x_1,x_2,x_3) = 2 x_2$  &$ \zeta_2 = (1,-1,0)$  \\
$3$ & $S_2 \longrightarrow S_3$ & $ \beta_3 = -1  $  & $ \lambda_3(x_1,x_2,x_3) = 5 x_2$  &$ \zeta_3 = (0,-1,1)$  \\
\hline
\end{tabular}
\label{table:hs} 
\end{table}

Let $\left\{ X^{N_0}_\theta(t) = (X^{N_0}_{\theta,1}(t) ,X^{N_0}_{\theta,2}(t) , X^{N_0}_{\theta,3}(t) ) :  t \geq 0\right\}$ be the stochastic process representing the dynamics of this multiscale reaction network. Hence for any time $t \geq 0$ and $i=1,2,3$, $X^{N_0}_{\theta,i}(t)$ denotes the number of molecules of $S_i$. Suppose that the initial state of the system is $X^{N_0}_\theta(0) = (v_0,0,0)$ for $v_0 = 20$. Note that the sum of the three species numbers is preserved by all the reactions. 
Hence the state space for the process $X^{N_0}_\theta$ is
\begin{align*}
\mathcal{S} = \left\{ (x_1,x_2,x_3) \in \N^d_0 : \ x_1+x_2+x_3 = v_0 \right\}.
\end{align*}
Clearly for this multiscale network, the \emph{first} time-scale is $\gamma_1 = 0$ (see Section \ref{sec:firsttimescaleconv}) and the corresponding set of ``natural" reactions is $\Gamma_1 =\{1,2\}$. Similarly the \emph{second} time-scale is $\gamma_2 = -1$ (see Section \ref{sec:secondtimescaleconv}) and the corresponding set of ``natural" reactions is $\Gamma_2 =\{3\}$. If the time-scale of reference is $\gamma_2$ then the dynamics is given by the Markov process $X^N_{\gamma_2,\theta}$ with generator $\mathbb{A}^N_{\gamma_2,\theta}$ (see \eqref{defn_gen_aNgamma_theta}) with $N = N_0$. As described in Section \ref{sec:secondtimescaleconv}, under certain conditions we can construct a projection $\Pi_2$ for which the process $\Pi_2X^N_{\gamma_2,\theta}$ has a well-behaved limit as $N \to \infty$. In this example, this projection is given by
 \begin{align*}
\Pi_2(x_1,x_2,x_3) = (x_1 + x_2,x_3). 
\end{align*}
Note that $\Pi_2 \zeta_k = (0,0)$ for each $k \in \Gamma_1$ and $\Pi_2 \zeta_3 = (-1,1) $. 
For any $v = (v_1,v_2) \in \Pi_2 \mathcal{S}$, define the space $\mathbb{H}_v$ (see \eqref{defn_ev}) by
\begin{align*}
\mathbb{H}_v = \{(x,v_1-x) \in \N^2_0  : \ x=0,1,\dots,v_1\} 
\end{align*}
and let $\mathbb{C}^v_\theta $ be the generator given by \eqref{defn_cvtheta}. A Markov process with state space $\mathbb{H}_v$ and generator $\mathbb{C}^v_\theta $ is ergodic. The unique stationary distribution has the form of a \emph{binomial} distribution
\begin{align*}
\pi^\theta_{v}(x, y) = \frac{ v_1 !}{ x! y! } \left( \frac{\theta}{2+\theta} \right)^{y}\left( \frac{2}{2 + \theta} \right)^x \textnormal{ for } (x,y) \in \mathbb{H}_v .
\end{align*}
Define $\hat{\lambda}_3 :\Pi_2 \mathcal{S} \to \R_+$ by  
\begin{align*}
\hat{\lambda}_3(v_1,v_2) = \sum_{ (x,y) \in\mathbb{H}_v }  5 y \pi^\theta_{v}(x, y) =   \left( \frac{5 v_1 \theta}{2+ \theta} \right).
\end{align*}
Let $\{ \hat{X}_\theta(t) = ( \hat{X}_{\theta,1}(t), \hat{X}_{\theta,2}(t) ) : t \geq 0 \}$ be the $\Pi_2 \mathcal{S}$-valued process with the following random time change representation
\begin{align*}
\hat{X}_{\theta}(t) &=\left[
\begin{tabular}{c}
$v_0$ \\
0
\end{tabular} \right] + Y\left( \left( \frac{5\theta}{2+ \theta} \right)  \int_{0}^t  \hat{X}_{\theta,1}(s)
ds \right) 
\left[
\begin{tabular}{c}
-1 \\
1
\end{tabular} \right]
, 
\end{align*}
where $Y$ is a unit rate Poisson process. The due to Proposition \ref{convergenceresult2} we have $\Pi_2 X^N_{\gamma_2,\theta}\Rightarrow \hat{X}_{\theta}$ as $N \to \infty$. 

Let $f : \R^3 \to \R$ be the function given by $$f(x_1,x_2,x_3) = x_3,$$ and suppose we want to estimate
\begin{align*}
S^{N_0}_{\gamma_2,\theta} (f,t) = \frac{\partial}{\partial \theta} \E\left( f( X^{N_0}_{\gamma_2,\theta}(t) ) \right) = \frac{\partial}{\partial \theta} \E\left(  X^{N_0}_{\theta,3}(t)  \right)  .
\end{align*}
Note that $f(x) = f(\Pi_2 x)$ for all $x \in \mathcal{S}$, and hence the function $f_\theta$ (given by \eqref{defn_fthetalimit}) coincides with the function $f$ on the set $\Pi_2 \mathcal{S}$. Therefore from Theorem \ref{mainsensitivityresult} we obtain
\begin{align}
\label{example:sens1}
S^{N_0}_{\gamma_2,\theta} (f,t)  \approx \hat{S}_{\theta }(f,t) :=\frac{\partial  }{ \partial \theta } \E \left( f( \hat{X}_\theta(t) ) \right) 
= \frac{\partial  }{ \partial \theta } \E \left( \hat{X}_{\theta,2}(t)  \right) ,
\end{align}
for large values of $N_0$. We now demonstrate the usefulness of \eqref{example:sens1} in estimating $S^{N_0}_{\gamma_2,\theta} (f,t)  $. We will
numerically show that $S^{N_0}_{\gamma_2,\theta} (f,t) $ and $\hat{S}_{\theta }(f,t) $ are ``close" to each other and the estimation of $\hat{S}_{\theta }(f,t) $ is far less computationally demanding than the estimation of $S^{N_0}_{\gamma_2,\theta} (f,t)$.

To estimate parameter sensitivities we will use the \emph{coupled finite difference} (CFD) scheme developed in \cite{DA}. In this method, the sensitivity value $S^{N_0}_{\gamma_2,\theta} (f,t) $ is estimated by a \emph{finite-difference} of the form 
\begin{align*}
\frac{1}{h}\E \left(  f \left(X^{N_0}_{\gamma_2,\theta + h }(t)  \right)  - f \left( X^{N_0}_{\gamma_2,\theta  }(t)  \right) \right)
\end{align*}
for a \emph{small} h, and the processes $X^{N_0}_{\gamma_2,\theta + h }$ and $X^{N_0}_{\gamma_2,\theta}$ are coupled together in a special way to reduce the variance of the associated estimator. Replacing derivative by a finite-difference introduces a \emph{bias} in the sensitivity estimate, but we will ignore this issue here.
Using CFD, we estimate $S^{N_0}_{\gamma_2,\theta} (f,t)$ and $ \hat{S}_{\theta }(f,t)$, with $h=0.01$, $t=1$, $N_0 = 10^4$, $\theta=1$ and $v_0 =20$.  
The results are reported in Table \ref{table2}. The sensitivity values are written in the form $s\pm l$, which means that the $95\%$ confidence interval of the estimated value is $[s-l,s+l]$. For each estimation we use the minimum number of samples that is needed to ensure that $l \leq 0.05 |s|$, where $|\cdot|$ is the absolute value function. In the table, we also indicate the CPU time\footnote{All the computations in this paper were performed using C++ programs on an Apple machine with a 2.2 GHz Intel i7 processor.} (in seconds) that was needed for the estimation. The CPU time can be taken as a measure of the computational effort that was required to estimate the sensitivity value.
\begin{table}[h]
\caption{Estimation of sensitivity value for $f(x_1,x_2,x_3) = x_3$} 
\label{table2} 
\centering
\begin{tabular}{|c | c | c | c |}
\hline
   & Sensitivity Value & Number of Samples & CPU time (s)    \\ \hline 
   $S^{N_0}_{\gamma_2,\theta} (f,t)$ & $4.2138 \pm 0.2107 $  &34932 &1663.34   \\
$\hat{S}_{\theta }(f,t)$  & $4.2017 \pm 0.2100$ &  35056 & 0.2333   \\ \hline
\end{tabular}
\end{table}
Note that Table \ref{table2} shows that relation \eqref{example:sens1} holds but the time needed to estimate $\hat{S}_{\theta }(f,t) $ is approximately $7000$ times less than the time needed to estimate $S^{N_0}_{\gamma_2,\theta} (f,t)$ .

Now suppose we want to estimate $S^{N_0}_{\gamma_2,\theta} (f,t) $ for $f : \R^3 \to \R$ given by
$$f(x_1,x_2,x_3) = x_1.$$
In this case, $f_\theta : \Pi_2 \mathcal{S} \to \R$ can be computed as
\begin{align*}
f_\theta(v) = \sum_{(x,y) \in \mathbb{H}_v} x \pi^\theta_v (x,y) =\left( \frac{2v_1}{ 2 + \theta } \right)  \textnormal{ for any } v = (v_1,v_2) \in  \Pi_2 \mathcal{S} .  
\end{align*}
Hence Theorem \ref{mainsensitivityresult} implies that
\begin{align*}
S^{N_0}_{\gamma_2,\theta } (f,t)  \approx  \hat{S}_{\theta }(f_\theta,t)  = \frac{\partial  }{ \partial \theta } \E \left( f_\theta( \hat{X}_\theta(t) ) \right) 
= \frac{\partial  }{ \partial \theta } \left(  \frac{ 2 \E \left( \hat{X}_{\theta,1}(t)  \right)  }{2 + \theta} \right)
=\left( \frac{2}{2+\theta}  \right)  \left[  \frac{\partial  }{ \partial \theta }\E \left( \hat{X}_{\theta,1}(t)  \right) - \frac{ \E \left( \hat{X}_{\theta,1}(t)  \right)  }{2 + \theta }  \right].
\end{align*}
As before we estimate $S^{N_0}_{\gamma_2,\theta} (f,t)$ and $ \hat{S}_{\theta }(f_\theta,t)$ using CFD, with $h=0.01$, $t=1$, $N_0 = 10^4$, $\theta=1$ and $v_0 =20$.  
The results are reported in Table \ref{table3}.
\begin{table}[h]
\caption{Estimation of sensitivity value for $f(x_1,x_2,x_3) = x_1$} 
\label{table3} 
\centering
\begin{tabular}{|c | c | c | c |}
\hline
   & Sensitivity Value & Number of Samples & CPU time (s)    \\ \hline 
$S^{N_0}_{\gamma_2,\theta} (f,t)$ & $-3.3946 \pm 0.1697 $ & 43745 & 2181.5  \\
$\hat{S}_{\theta }(f_\theta,t)$ & $-3.6369 \pm 0.1818 $ & 20827 & 0.1396   \\ \hline
\end{tabular}
\end{table}
As before, Table \ref{table3} shows that $S^{N_0}_{\gamma_2,\theta } (f,t)  \approx  \hat{S}_{\theta }(f_\theta,t) $ but the estimation of $S^{N_0}_{\gamma_2,\theta } (f,t) $ is around $15000$ times slower than the estimation of $ \hat{S}_{\theta }(f_\theta,t) $.

This example clearly illustrates that our main result, Theorem \ref{mainsensitivityresult}, can be used to obtain enormous savings in the computational effort that is required for the estimation of parameter sensitivities for multiscale networks.

\renewcommand {\theequation}{A.\arabic{equation}}
\appendix
\setcounter{equation}{0}

\section{Appendix.}\label{sec:app}

Let $S_1$ and $S_2$ be open subsets of $\R^n_+$ and $\R^m$ respectively. Let $\mathbb{A} \subset \mathcal{B}(S_1 \times S_2) \times \mathcal{B}(S_1 \times S_2)$ be an operator whose domain $\mathcal{D}(\mathbb{A})$ include all functions $f : S_1 \times S_2 \to \R$ of the form
\begin{align}
\label{app:funcfg}
f(x,y) = g(x),
\end{align}
where $g$ is some function in $\mathcal{B}(S_1)$. Let $U \subset S_1 \times S_2$ be an open set and let $X$ be a stochastic process with initial distribution $\nu \in \mathcal{P}(S_1\times S_2)$ and sample paths in $D_{S_1 \times S_2}[0,\infty)$. Define a stopping time with respect to the filtration generated by the process $X$ as
\begin{align}
\label{app:sttimetau}
\tau = \inf \{ t \geq 0 : X(t) \notin U \textnormal{  or  } X(t-) \notin U  \}.
\end{align}
Then $X$ is a solution of the \emph{stopped martingale problem} (see Section 6, Chapter 4 in \cite{EK}) for $(\mathbb{A}, \nu,U)$ if $X(\cdot) = X(\cdot \wedge \tau)$ a.s. and 
\begin{align*}
f(X(t)) - \int_{0}^{t\wedge \tau} \mathbb{A} f( X(s) )ds 
\end{align*}
is a martingale for each $f \in \mathcal{D}(\mathbb{A})$.

  Let $\Pi : S_1 \times S_2 \to S_1$ be the projection map defined by $\Pi(x,y) =x$. 
Suppose that for any $g \in \mathcal{B}(S_1)$ and $f$ given by \eqref{app:funcfg} we have
\begin{align}
\label{app:gen_relation}
\mathbb{A} f(x,y) = \sum_{k=1}^K \lambda_k(x,y) \left( g(x + \zeta_k) -g(x) \right),
\end{align}
where $\zeta_1,\dots,\zeta_K$ are certain vectors in $\R^n$ and $\lambda_1,\dots,\lambda_K$ are positive functions on $S_1 \times S_2$ satisfying the following : if $\lambda_k(x,y) >0$ for some $(x,y) \in S_1 \times S_2$ then $(x+\zeta_k) \in S_1$. Furthermore we assume that the function
\begin{align}
\label{app:whatispolygrow}
\sum_{k=1 , \langle \bar{1}_d, \zeta_k  \rangle > 0   }^K \lambda_k(x,y)
\end{align}
is linearly growing with respect to projection $\Pi$ (see Definition \ref{polynomialgrowth}) .
\begin{lemma}
\label{lemma1:app}
Fix a $w_0 =(x_0,y_0) \in S_1 \times S_2$ and let $\delta_{w_0} \in \mathcal{P}( S_1 \times S_2)$ be the distribution that puts all the mass at $w_0$. For any $M \in \N$, let $U_M$ be the open set 
\begin{align*}
U_M = \{ (x,y) \in S_1\times S_2 : \|x\| <  M  \}.
\end{align*}
Assume that the stopped martingale problem for $(\mathbb{A}, \delta_{w_0} ,U_M)$ has a unique solution $W_M$ for each $M$. Let $\tau_M$ be the stopping time defined by \eqref{app:sttimetau} with $U$ replaced by $U_M$. Then we have the following.
\begin{itemize}
\item[(A)] For any $T >0$, $\lim_{M \to \infty} \P( \tau_M < T ) = 0$. 
\item[(B)] There exists a unique solution $W$ for the (unstopped) martingale problem for $(\mathbb{A}, \delta_{w_0} )$. Moreover for any positive integer $p$ and $T>0$ we have
\begin{align*}
 \sup_{t \in [0,T]} \E( \| \Pi W(t)   \|^p ) <\infty.
\end{align*}
\item[(C)] If a function $f : S_1 \times S_2 \to \R$ is polynomially growing with respect to projection $\Pi$, then for any $T \geq 0$ 
\begin{align*}
\sup_{t \in [0,T]} \E\left( \left|  f ( W(t) )  \right|\right) <\infty.
\end{align*}
\item[(D)]  The martingale problem for $\mathbb{A}$ is well-posed.
\end{itemize}
\end{lemma}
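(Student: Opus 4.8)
The plan is to dispose of the four parts in the order stated, with the non-explosion estimate (A) carrying the analytic weight through a Lyapunov argument built on the linear-growth hypothesis \eqref{app:whatispolygrow}; parts (C) and (D) will then follow essentially for free.

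For part (A), I would take as test function $g(x) = \langle \bar{1}_n , x\rangle$, which coincides with $\|x\|$ on $S_1 \subset \R^n_+$. Since $g$ is unbounded it is not in $\mathcal{D}(\mathbb{A})$, so I would replace it by a bounded $g_M \in \mathcal{B}(S_1)$ agreeing with $\|\cdot\|$ on $\{\|x\| \le M + \max_k \|\zeta_k\|\}$; then $f_M(x,y) = g_M(x) \in \mathcal{D}(\mathbb{A})$ coincides with $\|\Pi\cdot\|$ along the entire path of $W_M$ stopped at $\tau_M$, since a single jump can raise $\|x\|$ by at most $\max_k\|\zeta_k\|$. The admissibility condition ($\lambda_k(x,y)>0 \Rightarrow x+\zeta_k \in \R^n_+$) gives $\|x+\zeta_k\|-\|x\| = \langle \bar{1}_n, \zeta_k\rangle$ on admissible jumps, so $\mathbb{A}f_M(x,y) = \sum_k \lambda_k(x,y)\langle \bar{1}_n,\zeta_k\rangle \le c\,\lambda_P(x,y)$ where $c = \max_k \langle \bar{1}_n,\zeta_k\rangle$ and $\lambda_P = \sum_{k \in P}\lambda_k$ sums the positive-effect reactions. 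By \eqref{app:whatispolygrow} this is bounded by $cC(1+\|x\|)$. Feeding this into the stopped martingale identity and applying Gronwall's inequality produces a bound $\E(\|\Pi W_M(t\wedge\tau_M)\|) \le K(T)$ that is \emph{uniform in} $M$ on $[0,T]$. Since $\|\Pi W_M(\tau_M)\| \ge M$ on $\{\tau_M < T\}$, Markov's inequality gives $\P(\tau_M < T) \le K(T)/M \to 0$.

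For part (B), I would patch the family $\{W_M\}$ into a single process $W$: consistency of the $W_M$ follows because $W_{M'}$ stopped at $\tau_M$ solves the $(\mathbb{A},\delta_{w_0},U_M)$ problem and so equals $W_M$ by the assumed uniqueness. The monotone limit $\tau_\infty = \sup_M \tau_M$ satisfies $\tau_\infty = \infty$ a.s. by part (A), so $W$ is defined for all times, and a localization argument (the stopped processes are martingales, and the uniform moment bound together with the polynomial growth of $\mathbb{A}f$ remove the localization) shows $W$ solves the unstopped martingale problem; uniqueness follows since any unstopped solution stopped at its exit time from $U_M$ must equal $W_M$. The higher moment bounds reuse the same Lyapunov scheme with $f(x,y) = (1+\|x\|)^p$: because $\|x+\zeta_k\|-\|x\| = \langle\bar{1}_n,\zeta_k\rangle$ is a bounded constant on admissible jumps, the mean value theorem bounds $\mathbb{A}f$ by $C''(1+\|x\|)^p$, and Gronwall yields $\sup_{t\le T}\E(\|\Pi W(t)\|^p) < \infty$. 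Part (C) is then immediate, since polynomial growth with respect to $\Pi$ means $|f(x,y)| \le C(1+\|x\|^r)$ and (B) with $p = \lceil r\rceil$ controls $\sup_{t\le T}\E(|f(W(t))|)$. Part (D) follows because $w_0$ was arbitrary, so (B) gives well-posedness for every point mass $\delta_{w_0}$; the passage to an arbitrary initial law is the standard mixing result for martingale problems (see Chapter 4 of \cite{EK}), with existence obtained by integrating the point-mass solutions against the initial distribution and uniqueness by the same result.

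The hardest step will be the localization in part (B): patching the stopped solutions and upgrading the resulting local martingale to a genuine martingale. This is precisely where the uniform-in-$M$ moment estimate from the Lyapunov function is indispensable, and the computation $\|x+\zeta_k\|-\|x\| = \langle \bar{1}_n,\zeta_k\rangle$ combined with the linear-growth control of $\lambda_P$ is what closes the Gronwall loop and keeps every bound uniform in $M$.
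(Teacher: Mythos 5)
Your proposal is correct and takes essentially the same approach as the paper's proof: the same Lyapunov--Gronwall scheme built on (truncated) powers of $\|\Pi \cdot\|$, using that admissible jumps change the $1$-norm by exactly $\langle \bar{1}_n , \zeta_k \rangle$ so that only the linearly growing $\lambda_P$ contributes, followed by Markov's inequality for (A), a Fatou-type passage from the stopped to the unstopped process for the moment bounds in (B), and (C), (D) as immediate consequences. The only real difference is in part (B): you sketch the patching/localization argument by hand, whereas the paper simply invokes Theorem 6.3 of Chapter 4 of \cite{EK} (well-posedness of the stopped problems plus exit times tending to infinity imply well-posedness of the unstopped problem), which is precisely the statement you outline and spares the uniform-integrability bookkeeping needed to upgrade the local martingale.
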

\begin{proof}
Suppose that $W_M(t) = (X_M(t),Y_M(t) )$ for all $t \geq 0$, where $X_M$ and $Y_M$ are processes with state spaces $S_1$ and $S_2$ respectively.
Let $q = \max\{ \langle \bar{1}_d, \zeta_k\rangle : k =1,\dots,K \}$. For a large $M$ and a positive integer $p$ define $g \in \mathcal{B}(S_1)$ by
\begin{align*}
g(x) = \|x\|^p \wedge (M + q)^p.
\end{align*} 
Assume that $\|x_0\|^p < M  $ and note that the definition of $g$ implies that for $t\leq \tau_M$ we have $g( X_M(t)) = \|X_M(t)\|$.
Let $f : S_1 \times S_2 \to \R$ be the function given by $f(x,y) =g(x)$. Then $f \in \mathcal{D}(\mathbb{A} )$ and hence
\begin{align*}
&f(W_M( t \wedge \tau_M )) -x_0^p- \int_{0}^{t \wedge \tau_M} \mathbb{A} f( W_M(s) ) ds \\
& = \|  X_M( t \wedge \tau_M )) \|^p  -x_0^p- \int_{0}^{t \wedge \tau_M} \sum_{k=1}^K \lambda_k( X_M(s),Y_M(s) ) \left(   \| X_M( s  ) +\zeta_k \|^p -\| X_M( s  ))  \|^p   \right) ds
\end{align*}
is a martingale starting at $0$. Taking expectations we get
\begin{align*}
\E\left(  \|  X_M( t \wedge \tau_M )) \|^p \right) = x_0^p + \E\left(  \int_{0}^{t \wedge \tau_M} \sum_{k=1}^K \lambda_k( X_M(s),Y_M(s) ) \left(   \| X_M( s  ) +\zeta_k \|^p -\| X_M( s  )) \|^p   \right) ds\right)
\end{align*}
Our assumption on the functions $\lambda_1,\dots,\lambda_K$ implies that when $\lambda_k(X_M(s),Y_M(s) )  > 0$, then $(X_M(s)+ \zeta_k) \in S_1 \subset \R_+^d$ and hence $\| X_M(s) + \zeta_k\| = \langle \bar{1}_d, X_M(s)\rangle + \langle \bar{1}_d, \zeta_k \rangle$. This gives us
 \begin{align*}
 \E\left(  \|  X_M( t \wedge \tau_M )) \|^p \right) &= x_0^p + \E\left(  \int_{0}^{t \wedge \tau_M} \sum_{k=1}^K \lambda_k( X_M(s),Y_M(s) ) \left(   \| X_M( s  ) +\zeta_k \|^p -\| X_M( s  )) \|^p  \right) ds\right) \\
 & =  x_0^p + \E\left(  \int_{0}^{t \wedge \tau_M} \sum_{k=1}^K \lambda_k( X_M(s),Y_M(s) ) \left(   ( \langle \bar{1}_d, X_M(s)\rangle + \langle \bar{1}_d, \zeta_k \rangle)^p -\langle \bar{1}_d, X_M( s  ) \rangle^p  \right) ds\right) \\
& \leq x_0^p + 2^p q^p  \E \left(  \int_{0}^{t }  \sum_{k \in P} \lambda_k( X_M(s\wedge \tau_M),Y_M(s\wedge \tau_M) ) \left(  \| X_M(s \wedge \tau_M ) \|^{p-1} + 1 \right) ds    \right),
\end{align*}
where $P = \{k=1,\dots,K : \langle \bar{1}_d ,\zeta_k \rangle > 0\}$. Since the function given by \eqref{app:whatispolygrow} is linearly growing with respect to projection $\Pi$, we can find a positive constant $C$ (independent of $M$) such that
\begin{align*}
\E \left( \| X_M(t \wedge \tau_M) \|^p )\right) &\leq x_0^p + C t + C  \int_{0}^t  \E \left( \|X_M(s \wedge \tau_M)\|^p \right) ds.   
\end{align*}•
Gronwall's inequality implies that
\begin{align}
\label{app:gronwall1}
\E \left( \| X_M(t \wedge \tau_M) \|^p \right) \leq \left( x_0^p  + C   t \right) e^{C t}.
\end{align}
Using Markov's inequality we obtain
\begin{align*}
\lim_{M \to \infty} \P \left(   \tau_M <   t  \right) &= \lim_{M \to \infty}\P \left( \| X_M(t \wedge \tau_M) \|^p    \geq M^p \right)  \leq  \lim_{M \to \infty}\frac{\E \left( \| X_M(t \wedge \tau_M) \|^p )\right) }{M^p}  = 0.
\end{align*}•
The last limit is $0$ due to \eqref{app:gronwall1}. This proves part (A) of the lemma. From Theorem 6.3 in Chapter 4 of \cite{EK} we can conclude that the martingale problem for $( \mathbb{A} ,\delta_{w_0} )$ has a unique solution $W$. In fact for any $M \in \N$, the process $W_M(\cdot \wedge \tau_M)$ has the same distribution as the process $W(\cdot  \wedge \tau_M)$. Therefore using \eqref{app:gronwall1} we get
\begin{align}
\label{app:gronwall2}
\E \left( \| \Pi W(t \wedge \tau_M) \|^p \right)  = \E \left(  \| \Pi W_M(t \wedge \tau_M) \|^p  \right) \leq \left( x_0^p  + C   t \right) e^{C t}.
\end{align}
Since $\tau_M$ is monotonically increasing with $M$, we must have that $\tau_M \to \infty$ a.s. as $M \to \infty$.
Letting $M \to \infty$ in \eqref{app:gronwall2} and using Fatou's lemma we obtain
\begin{align*}
 \E \left(  \| \Pi W(t) \|^p \right) \leq \lim_{M \to \infty} \E \left( \| \Pi W(t \wedge \tau_M) \|^p  \right) \leq \left( x_0^p  + C t \right) e^{C t}.
\end{align*}
Taking supremum over $t \in [0,T]$ proves part (B) of the lemma. The proof of part (C) is immediate from part (B). Since part (B) of this lemma holds for any $w_0$, the martingale problem for $\mathbb{A}$ is well-posed and this proves part (D).
\end{proof}

Using the above lemma we now prove the main result of this section.
\begin{lemma}
\label{mainlemmapp}
Recall the definition of operator $\mathbb{B}^N_\theta$ from \eqref{genbntheta}. 
\begin{itemize}
\item[(A)] The martingale corresponding to $\mathbb{B}^N_\theta$ is well-posed. 
\item[(B)] Let $W^N_\theta$ be the $\hat{\mathcal{S}}$-valued Markov process with generator $\mathbb{B}^N_\theta$ and initial state $(t_0,v_0,z_0) \in \hat{\mathcal{S}}$.
For any $M \in \N$, define a stopping time by
\begin{align}
\label{app:sttimetaumn}
\sigma^N_M =\inf\{ t \geq 0 : \|\Pi_{\hat{ \mathcal{S} }} W^N_\theta(t) \| > M  \}.
\end{align}
Then for any $T>0$
\begin{align}
\label{app:sttimegototinfinitye}
\lim_{M \to \infty} \sup_{N \in \N} \P\left(  \sigma^N_M < T\right) = 0.
\end{align}
\item[(C)] For any positive integer $p$ and any $T>0$
 \begin{align}
\label{app:mmomnetboundactualprocess}
\sup_{t \in [0,T]} \sup_{N \in \N } \E\left(  \|\Pi_{\hat{ \mathcal{S} }} W^N_\theta(t) \|^p \right) < \infty.
\end{align}
\item[(D)]  Let $f : \mathcal{S} \to \R$ be a function which is polynomially growing with respect to projection $\Pi_2$, and define $f^N_\theta$ by \eqref{defn_ftheta}. Then for any positive integer $p$ and $T>0$ 
\begin{align}
\label{app:polygrowth2}
\sup_{N \in  \N } \sup_{t \in [0,T]} \E\left( \left|  f^N_\theta ( W^N_\theta(t) )  \right|^p \right)  < \infty \quad \textnormal{ and } \quad \sup_{N \in \N }
 \E\left( \int_{0}^T \left| \mathbb{B}^N_\theta  f^N_\theta ( W^N_\theta  (t))  \right|^p dt \right) <\infty. 
 \end{align} 
 \item[(E)]  Let $f$ and $f^N_\theta$ be as in part (D). 
 For any $T \geq 0$ and any stopping time $\sigma$ we have
\begin{align}
\label{corr:appdynkin}
\E\left( f^N_\theta( W^N_\theta(T \wedge \sigma) ) \right) = f^N_\theta(t_0,v_0,z_0) + \E\left( \int_{0}^{T \wedge \sigma} \mathbb{B}^N_\theta f^N_\theta( W^N_\theta(t) ) dt\right).
\end{align}
\item[(F)] The sequence of processes $\{W^N_\theta : N \in \N\}$ is tight in the space $D_{ \hat{\mathcal{S}}}[0,\infty)$.
\end{itemize}
\end{lemma}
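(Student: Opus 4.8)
The structure of this lemma is engineered so that every part reduces to Lemma \ref{lemma1:app} once the generator $\mathbb{B}^N_\theta$ is recast in the form \eqref{app:gen_relation}. The plan is to take the space $S_1$ of Lemma \ref{lemma1:app} to be a neighbourhood of $\Pi_2\mathcal{S}$ carrying the slow variable $v$, and $S_2$ to carry the pair $(t,z)$, so that $\Pi_{\hat{\mathcal{S}}}$ plays the role of the projection $\Pi$ there. For a test function of the form $f(t,v,z)=g(v)$ — which lies in $\mathcal{C}=\mathcal{D}(\mathbb{B}^N_\theta)$ since $\partial f/\partial t\equiv 0$ — part (B) of Lemma \ref{lemma_usefulproperties} (that $\sum_{e}\Theta^N_{k,\theta}(t,v,z,e)=1$) collapses the generator to
\begin{align*}
\mathbb{B}^N_\theta f(t,v,z) = \sum_{k\in\Gamma_2}\rho^N_{k,\theta}(t,v,z)\bigl(g(v+\zeta^s_k)-g(v)\bigr),
\end{align*}
which is exactly \eqref{app:gen_relation} with stoichiometric vectors $\zeta^s_k=\Pi_2\zeta_k$ and ``propensities'' $\rho^N_{k,\theta}$.

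The engine of the whole proof is a single $N$-uniform bound. Since $Z^N_\theta(t)\in\mathbb{H}_v$ almost surely and the weight appearing in \eqref{defnrhonk} is positive, one reads off
\begin{align*}
\rho^N_{k,\theta}(t,v,z)\le \max_{e\in\mathbb{H}_v}\lambda_k(v+e,\theta)\qquad\text{for all }N,\,t.
\end{align*}
Because $\Pi_2(v+e)=v$ for $e\in\mathbb{H}_v$, summing this over the reduced positive reactions $\{k\in\Gamma_2:\langle\bar{1}_d,\Pi_2\zeta_k\rangle>0\}$ and invoking part (C) of Assumption \ref{assumptions2} (available through part (A) of Assumption \ref{assforsensitivityresult}) shows that the function \eqref{app:whatispolygrow} associated with $\mathbb{B}^N_\theta$ is linearly growing with respect to $\Pi_{\hat{\mathcal{S}}}$, with a constant independent of $N$. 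I would verify this first, because the Gronwall constant $C$ in the proof of Lemma \ref{lemma1:app} depends only on this linear-growth constant and on the fixed quantity $q=\max_{k\in\Gamma_2}\langle\bar{1}_d,\Pi_2\zeta_k\rangle$; hence the estimate \eqref{app:gronwall1} holds uniformly in $N$.

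With this in hand the first four parts follow quickly. For part (A) I would establish well-posedness of the stopped martingale problem on each $U_M$ directly from the construction in Remark \ref{rem:construction}: inside $U_M$ only finitely many lattice points $v$ with $\|v\|\le M$ occur, each $\mathbb{H}_v$ is finite, and the rates $\rho^N_{0,\theta}$ are continuous and bounded there, so the piecewise-deterministic construction yields a unique solution; Lemma \ref{lemma1:app}(D) then gives well-posedness of the unstopped problem. Parts (B) and (C) are immediate from parts (A) and (B) of Lemma \ref{lemma1:app}, the suprema over $N$ surviving precisely because the Gronwall constant is $N$-independent. For part (D), part (A) of Lemma \ref{lemma_usefulproperties} exhibits $f^N_\theta(t,v,z)$ as a convex combination of the values $\{f(v+e):e\in\mathbb{H}_v\}$; since $f$ is polynomially growing with respect to $\Pi_2$ and $\Pi_2(v+e)=v$, this gives $|f^N_\theta(t,v,z)|\le C(1+\|v\|^r)$ uniformly in $N$ and $t$, so together with Remark \ref{rem:functioninclassc} (which supplies the analogous bound for $\mathbb{B}^N_\theta f^N_\theta$) and the moment bound of part (C), both estimates in \eqref{app:polygrowth2} follow.

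Parts (E) and (F) are where I expect the real work. Because $f^N_\theta$ is only polynomially growing, the optional-sampling identity \eqref{corr:appdynkin} cannot be read off directly from the martingale property, which is stated for bounded functions, so I would localize: apply the optional-stopping identity with $\sigma$ replaced by $\sigma\wedge\sigma^N_M$, where $f^N_\theta$ is bounded on $U_M$, and then let $M\to\infty$, using part (B) to discard the boundary contribution and parts (C)--(D) with some $p>1$ to secure the uniform integrability needed to pass the limit through the terminal term and the time integral. For tightness (part (F)) I would verify the compact containment condition from part (B) — noting that $\tau^N_\theta(t)\le t\le T$ is automatically contained and that on $\{\|v\|\le M\}$ the finitely many admissible $z$ are contained as well — and then apply the Aldous--Rebolledo criterion (e.g. Theorem 3.9.4 in \cite{EK}): the crucial input is again the bound above, which caps $\rho^N_{0,\theta}$ and the bounded jump sizes on each compact set, so that the expected number of jumps in any window $[t,t+\delta]$ is $O(\delta)$ uniformly in $N$. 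The main obstacle throughout is maintaining uniformity in $N$; once the pointwise bound on $\rho^N_{k,\theta}$ is secured, it propagates to every part of the lemma.
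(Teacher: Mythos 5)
Your proposal is correct and its overall architecture coincides with the paper's: both reduce the lemma to Lemma \ref{lemma1:app} by observing that $\mathbb{B}^N_\theta$, acting on functions of the form $f(t,v,z)=g(v)$, takes exactly the form \eqref{app:gen_relation} with ``propensities'' $\rho^N_{k,\theta}$ and jump vectors $\zeta^s_k$, and both hinge on the $N$-uniform linear growth of the positive-reaction rates with respect to $\Pi_{\hat{\mathcal{S}}}$ (your pointwise bound $\rho^N_{k,\theta}(t,v,z)\le\max_{e\in\mathbb{H}_v}\lambda_k(v+e,\theta)$, read off from \eqref{defnrhonk}, is the right way to obtain it; the paper asserts this growth property without detail). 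Parts (B), (C), (D) and (F) then proceed as in the paper: $N$-independent Gronwall constants, the convex-combination bound on $f^N_\theta$ (which is the content of Remark \ref{rem:functioninclassc}), compact containment from part (B), and Theorems 3.9.1 and 3.9.4 of \cite{EK}. (For part (F) the paper verifies the hypothesis of Theorem 3.9.4 via the martingale $f(W^N_\theta(t))-\int_0^t\mathbb{B}^N_\theta f(W^N_\theta(s))ds$ together with an $L^2$ bound on the integrand, rather than an Aldous-type jump-count estimate, but these are interchangeable here.)

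You deviate in two places, and both deviations carry a soft spot. First, for the stopped martingale problem on $U_M$ --- which is a \emph{hypothesis} of Lemma \ref{lemma1:app} and must be verified --- the paper zeroes the rates outside $U_M$ and invokes the bounded-perturbation theorem (Theorem 4.10.3 of \cite{EK}) applied to the translation operator $\partial/\partial t$; you instead appeal to the explicit construction of Remark \ref{rem:construction}. A construction yields \emph{existence} of a solution, but not by itself \emph{uniqueness} of the stopped martingale problem; the perturbation theorem is precisely what supplies uniqueness, so your part (A) should be routed through it (or an equivalent standard uniqueness result for bounded-rate jump processes). Second, for part (E) the paper truncates the function, setting $f_M=f\wedge M$ so that $f^N_{M,\theta}$ defined by \eqref{defn_ftheta} lies in $\mathcal{C}$, applies Dynkin's formula (Lemma 19.21 of \cite{Kal}) directly at the stopping time $T\wedge\sigma$, and lets $M\to\infty$ by dominated convergence; you localize in space with $\sigma\wedge\sigma^N_M$ and invoke uniform integrability. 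Your route can be made to work, but note that the moment bounds in parts (C)--(D) are stated at \emph{deterministic} times, so they do not literally control the terminal term $f^N_\theta(W^N_\theta(T\wedge\sigma\wedge\sigma^N_M))$ uniformly in $M$; you need a stopped-time moment estimate, obtained by reworking the Gronwall argument of Lemma \ref{lemma1:app} (which bounds $\E(\|X_M(t\wedge\tau_M)\|^p)$), or a bound on $\E(\sup_{t\le T}\|\Pi_{\hat{\mathcal{S}}}W^N_\theta(t)\|^p)$. The paper's function-truncation trick sidesteps this because the dominating function is fixed independently of the truncation level.
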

\begin{proof}
Note that on the set
\begin{align*}
U_M = \{ (t,v,z ) \in \hat{ \mathcal{S} }  : \|v\| < M\},
\end{align*}
the functions $\{ \rho^N_{k,\theta} : k \in \Gamma_2\}$ are bounded. If we define each $\rho^N_{k,\theta} $ to be $0$ outside the set $U_M$, then the resulting operator $\mathbb{B}^N_{M,\theta}$ can be seen as a bounded perturbation of the translation operator
\begin{align*}
\mathbb{T}f(t,v,z) = \frac{ \partial f(t,v,z)}{ \partial t },
\end{align*}
which certainly has a well-posed martingale problem. From Theorem 4.10.3 in \cite{EK} we can conclude that the martingale problem for $\mathbb{B}^N_{M,\theta}$ is well-posed. This implies that for any initial state $w_0 \in \hat{ \mathcal{S} } $, the stopped martingale problem for $(\mathbb{B}^N_\theta, \delta_{w_0} , U_M )$ is well-posed. Assumption \ref{assforsensitivityresult} imply that the function 
\begin{align}
\label{defn_hatrho}
\hat{\rho}^N_{\theta} (t,v,z) = \sum_{k \in \Gamma_2 , \langle \bar{1}_d, \zeta^s_k  \rangle > 0   }  \rho^N_{k,\theta} (t,v,z)
\end{align}
is linearly growing with respect to projection $\Pi_{\hat{ \mathcal{S} }}$ (given by \eqref{defn:projhats}). Therefore part (B) of Lemma \ref{lemma1:app} shows that there is a unique solution for the martingale problem for $(\mathbb{B}^N_\theta, \delta_{w_0})$. Hence the martingale problem for $\mathbb{B}^N_\theta$ is well-posed and this proves part (A).

Let $W^N_\theta$ be the $\hat{\mathcal{S}}$-valued Markov process with generator $\mathbb{B}^N_\theta$ and initial state $(t_0,v_0,z_0)$. If $\hat{\rho}^N_{\theta} $ is given by \eqref{defn_hatrho}, then due to Assumption \ref{assforsensitivityresult} we can find constants $C,r \geq 0$ such that
\begin{align*}
|\hat{\rho}^N_{\theta} (t,v,z) |\leq C(1+\|v\|^r) \textnormal{ for all }(t,v,z) \in \hat{\mathcal{S}} \textnormal{ and } N \in \N.
\end{align*}
Using this fact we can rework the proof of Lemma \ref{lemma1:app} to prove parts (B) and (C). 

Let $f : \mathcal{S} \to \R$ be a function which is polynomially growing with respect to projection $\Pi_2$ and define $f^N_\theta$ by \eqref{defn_ftheta}.
Remark \ref{rem:functioninclassc} implies that the sequences of functions $\{f^N_\theta : N\in \N\}$ and $\{ \mathbb{B}^N_\theta  f^N_\theta : N\in \N\}$ are polynomially growing with respect to projection $\Pi_{\hat{ \mathcal{S} }} $. Therefore part (D) is an easy consequence of part (C).

Corresponding to the function $f$ define a function $f_M : \mathcal{S} \to \R$ by
\begin{align*}
f_M(x) = f(x) \wedge M.
\end{align*}
Let $f^N_{M,\theta}$ be the given by \eqref{defn_ftheta}, with $f$ replaced by $f_M$. Since $f_M$ is bounded, the function $f^N_{M,\theta}$ is in class $\mathcal{C}$. Using Dynkin's theorem (see Lemma 19.21 in \cite{Kal}) we get
\begin{align*}
\E\left( f^N_{M,\theta}( W^N_\theta(T \wedge \sigma) ) \right) = f^N_{M,\theta}(t_0,v_0,z_0) + \E\left( \int_{0}^{T \wedge \sigma} \mathbb{B}^N_\theta f^N_{M,\theta}( W^N_\theta(t) ) dt\right).
\end{align*}
Taking the limit $M \to \infty$ and using the dominated convergence theorem proves part (E).

To show that the sequence $\{W^N_\theta : N \in \N\}$ is tight we first have to prove the compact containment criterion (see Chapter 3 in \cite{EK}). This means that for any $T,\epsilon >0$ we exhibit a compact set $K_{\epsilon,T} \subset \hat{ \mathcal{S} }$ such that
\begin{align}
\label{app:compactcontainment}
\inf_{N \in  \N } \P\left( W^N_\theta(t) \in K_{\epsilon,T}  \textnormal{ for all }t \in [0,T]  \right) \geq 1- \epsilon.
\end{align}
Let $\sigma^N_M $ be the stopping time given by \eqref{app:sttimetaumn}. For any $t \geq 0$, we can write $W^N_\theta(t) = ( \tau^N_\theta(t), V^N_\theta(t), Z^N_\theta(t) )$ (see \eqref{decompwntheta}). Fix an $\epsilon >0$ and $T>0$. Part (B) shows that we can find a $M >0$ large enough so that
\begin{align}
\label{supoftauleqeps}
 \sup_{N \in \N}  \P(\sigma^N_M \leq T) < \epsilon. 
\end{align}
Note that for any $t \geq 0$, if $V^N_\theta(t) = v$ then $\tau^N_\theta(t) \in [0,t+t_0]$ and $ Z^N_\theta(t)\in \mathbb{H}_v$ where $\mathbb{H}_v$ is a finite set.
This shows that for any $t  <  \sigma^N_M$ we have $W^N_\theta(t) \in K_{\epsilon,T} $ where $K_{\epsilon,T} $ is the compact set given by
\begin{align*}
K_{\epsilon,T} = \left\{ (t,v,z) \in  \hat{ \mathcal{S} } : t \in [0,T+t_0] , \|v\| \leq M \textnormal{ and } z \in \mathbb{H}_v \right\}.
\end{align*}
Hence
\begin{align*}
\P\left( W^N_\theta(t) \in K_{\epsilon,T}  \textnormal{ for all }t \in [0,T]  \right) \geq  \P(\sigma^N_M > T) =1 - \P(\sigma^N_M \leq T)  .
\end{align*}
Taking supremum over $N$ and using \eqref{supoftauleqeps} proves \eqref{app:compactcontainment}.

Now that we have shown the compact containment condition, Theorem 3.9.1 in \cite{EK} allows us to verify the tightness of $\{W^N_\theta : N \in \N\}$ by proving that for any $f  \in \mathcal{C}$ (see \eqref{classc}), the sequence of processes $\{ f( W^N_\theta (\cdot) ) : N \in \N\}$ is tight in the space $D_\R[0,\infty)$. Note that
\begin{align*}
f(W^N_\theta(t)) - \int_{0}^t \mathbb{B}^N_\theta f(W^N_\theta(s)  ) ds
\end{align*}
is a martingale and part (D) of the lemma shows that
\begin{align*}
\E\left( \int_{0}^t \left| \mathbb{B}^N_\theta  f^N_\theta ( W^N_\theta  (s))  \right|^2ds \right) <\infty 
\end{align*}
for any $t \geq 0$. The tightness of the sequence $\{ f( W^N_\theta (\cdot) ) : N \in \N\}$ is immediate from Theorem 3.9.4 in \cite{EK}. This completes the proof of part (E) of the lemma.
\end{proof}

\setcounter{equation}{0}

\bibliographystyle{abbrv}
\bibliography{references}
\end{document}